 \newtheorem{theorem}{Theorem}[section]
 \newtheorem{corollary}[theorem]{Corollary}
 \newtheorem{lemma}[theorem]{Lemma}
 \newtheorem{conjecture}[theorem]{Conjecture}
 \theoremstyle{definition}
 \theoremstyle{remark}
 \newtheorem{remark}[theorem]{Remark}
 \numberwithin{equation}{section}
\newcommand{\C}{\mathbb{C}}
\newcommand{\R}{\mathbb{R}}
\newcommand{\W}{\mathbb{W}}
\newcommand{\Z}{\mathbb{Z}}
\newcommand{\cA}{\mathcal{A}}
\newcommand{\cB}{\mathcal{B}}
\newcommand{\cC}{\mathcal{C}}
\newcommand{\cG}{\mathcal{G}}
\newcommand{\cJ}{\mathcal{J}}
\newcommand{\cK}{\mathcal{K}}
\newcommand{\cL}{\mathcal{L}}
\newcommand{\cM}{\mathcal{M}}
\newcommand{\cS}{\mathcal{S}}
\newcommand{\cZ}{\mathcal{Z}}
\newcommand{\alg}{\operatorname{alg}}
\newcommand{\diag}{\operatorname{diag}}
\newcommand{\im}{\operatorname{Im}}
\newcommand{\Ker}{\operatorname{Ker}}
\newcommand{\eps}{\varepsilon}
\begin{document}

\title[Singular Integral Operators]
 {Singular Integral Operators on Variable Lebesgue Spaces
 with Radial Oscillating Weights}
%----------Author 1
\author[Karlovich]{Alexei Yu. Karlovich}
\address{%%
Departamento de Matem\'atica\\
Faculdade de Ci\^encias e Tecnologia\\
Universidade Nova de Lisboa\\
Quinta da Torre\\
2829--516 Caparica\\
Portugal}
\email{oyk@fct.unl.pt}
\thanks{The author is partially supported by the grant FCT/FEDER/POCTI/MAT/59972/2004.}

%----------classification, keywords, date
\subjclass[2000]{Primary 47B35; Secondary 45E05, 46E30, 47A68}
\keywords{Variable Lebesgue space, Carleson curve, variable
exponent, radial oscillating weight, Matuszewska-Orlicz indices,
submultiplicative function}

%----------additions
\dedicatory{To the memory of Igor Borisovich Simonenko (1935--2008)}
%%% ----------------------------------------------------------------------
\begin{abstract}
We prove a Fredholm criterion for operators in the Banach algebra of singular
integral operators with matrix piecewise continuous coefficients acting on a
variable Lebesgue space with a radial oscillating weight over a logarithmic
Carleson curve. The local spectra of these operators are massive and have a
shape of spiralic horns depending on the value of the variable exponent,
the spirality indices of the curve, and the Matuszewska-Orlicz indices of
the weight at each point. These results extend (partially) the results of
A.~B\"ottcher, Yu.~Karlovich, and V.~Rabinovich for standard Lebesgue spaces
to the case of variable Lebesgue spaces.
\end{abstract}
%%% ----------------------------------------------------------------------
\maketitle
%%% ----------------------------------------------------------------------
%\tableofcontents
\section{Introduction}
Let $X$ be a Banach space and $\cB(X)$ be the Banach algebra of all bounded
linear operators on $X$. An operator $A\in\cB(X)$ is said to be $n$-normal
(resp. $d$-normal) if its image $\im A$ is closed in $X$ and the defect number
$n(A;X):=\dim\Ker A$ (resp. $d(A;X):=\dim\Ker A^*$) is finite. An operator $A$
is said to be semi-Fredholm on $X$ if it is $n$-normal or $d$-normal. Finally,
$A$ is said to be Fredholm if it is simultaneously $n$-normal and $d$-normal.
Let $N$ be a positive integer. We denote by $X_N$ the direct sum of $N$ copies of
$X$ with the norm
\[
\|f\|=\|(f_1,\dots,f_N)\|:=(\|f_1\|^2+\dots+\|f_N\|^2)^{1/2}.
\]

Let $\Gamma$ be a Jordan curve, that is, a curve that is homeomorphic to a circle.
We suppose that $\Gamma$ is rectifiable. We equip $\Gamma$ with Lebesgue length
measure $|d\tau|$ and the counter-clockwise orientation. The \textit{Cauchy
singular integral} of $f\in L^1(\Gamma)$ is defined by
\[
(Sf)(t):=\lim_{R\to 0}\frac{1}{\pi i}\int_{\Gamma\setminus\Gamma(t,R)}
\frac{f(\tau)}{\tau-t}d\tau
\quad (t\in\Gamma),
\]
where $\Gamma(t,R):=\{\tau\in\Gamma:|\tau-t|<R\}$ for $R>0$.
David \cite{David84} (see also \cite[Theorem~4.17]{BK97}) proved that the
Cauchy singular integral generates the bounded operator $S$ on the Lebesgue
space $L^p(\Gamma)$, $1<p<\infty$, if and only if $\Gamma$ is a
\textit{Carleson} (\textit{Ahlfors-David regular}) \textit{curve}, that is,
\[
\sup_{t\in\Gamma}\sup_{R>0}\frac{|\Gamma(t,R)|}{R}<\infty,
\]
where $|\Omega|$ denotes the measure of a measurable set $\Omega\subset\Gamma$.
We can write $\tau-t=|\tau-t|e^{i\arg(\tau-t)}$
for $\tau\in\Gamma\setminus\{t\}$,
and the argument can be chosen so that it is continuous on $\Gamma\setminus\{t\}$.
It is known \cite[Theorem~1.10]{BK97} that for an arbitrary Carleson curve the
estimate
\[
\arg(\tau-t)=O(-\log|\tau-t|)\quad (\tau\to t)
\]
holds for every $t\in\Gamma$. One says
that a Carleson curve $\Gamma$ satisfies the \textit{logarithmic whirl condition}
at $t\in\Gamma$ if
%%%
\begin{equation}\label{eq:spiralic}
\arg(\tau-t)=-\delta(t)\log|\tau-t|+O(1)\quad (\tau\to t)
\end{equation}
%%%
with some $\delta(t)\in\R$. Notice that all piecewise smooth curves satisfy this
condition at each point and, moreover, $\delta(t)\equiv 0$. For more information
along these lines, see \cite{BK95}, \cite[Chap.~1]{BK97}, \cite{BK01}.

A measurable function $w:\Gamma\to[0,\infty]$ is referred to as a \textit{weight
function} or simply a \textit{weight} if $0<w(\tau)<\infty$ for almost all
$\tau\in\Gamma$. Suppose $p:\Gamma\to(1,\infty)$ is a continuous function.
Denote by $L^{p(\cdot)}(\Gamma,w)$ the set of all measurable complex-valued
functions $f$ on $\Gamma$ such that
\[
\int_\Gamma |f(\tau)w(\tau)/\lambda|^{p(\tau)}|d\tau|<\infty
\]
for some $\lambda=\lambda(f)>0$. This set becomes a Banach space when equipped
with the Luxemburg-Nakano norm
\[
\|f\|_{p(\cdot),w}:=\inf\left\{\lambda>0:
\int_\Gamma |f(\tau)w(\tau)/\lambda|^{p(\tau)}|d\tau|\le 1\right\}.
\]
If $p$ is constant, then $L^{p(\cdot)}(\Gamma,w)$ is nothing else than
the weighted Lebesgue space. Therefore, it is natural to refer to
$L^{p(\cdot)}(\Gamma,w)$ as a \textit{weighted generalized Lebesgue space
with variable exponent} or simply as a \textit{weighted variable Lebesgue
space}. This is a special case of Musielak-Orlicz spaces \cite{Musielak83}
(see also \cite{KR91}). Nakano \cite{Nakano50} considered these spaces
(without weights) as examples of so-called modular spaces, and sometimes
the spaces $L^{p(\cdot)}(\Gamma,w)$ are referred to as weighted Nakano
spaces. Since $\Gamma$ is compact, we have
\[
1<\min_{t\in\Gamma}p(t),
\quad
\max_{t\in\Gamma}p(t)<\infty.
\]
Therefore, if $w\in L^{p(\cdot)}(\Gamma)$ and $1/w\in L^{q(\cdot)}(\Gamma)$,
where
\[
q(t):=p(t)/(p(t)-1)
\]
is the conjugate exponent for $p$, then $L^{p(\cdot)}(\Gamma,w)$ is reflexive
and its Banach dual can be identified with $L^{q(\cdot)}(\Gamma,1/w)$
(see e.g. \cite[Section~13]{Musielak83}, \cite[Corollary~2.7]{KR91}
and also \cite[Section~2]{Karlovich03}).

Following \cite[Section~2.3]{KSS07}, denote by $W$ the class of all
continuous functions $\varrho: [0,|\Gamma|]\to[0,\infty)$ such that
$\varrho(0)=0$, $\varrho(x)>0$ if $0<x\le |\Gamma|$, and $\varrho$ is
almost increasing, that is, there is a universal constant $C>0$ such
that $\varrho(x)\le C\varrho(y)$ whenever $x\le y$. Further, let $\W$ be
the set of all functions $\varrho:[0,|\Gamma|]\to[0,\infty]$ such that
$x^\alpha\varrho(x)\in W$ and $x^\beta/\varrho(x)\in W$ for some
$\alpha,\beta\in\R$. Clearly, the functions $\varrho(x)=x^\gamma$
belong to $\W$ for all $\gamma\in\R$. For $\varrho\in\W$, put
\[
\Phi_\varrho^0(x):=\limsup_{y\to 0}\frac{\varrho(xy)}{\varrho(y)},
\quad
x\in(0,\infty).
\]
Since $\varrho\in\W$, one can show (see Subsection~\ref{sect:MO})
that the limits
%%%
\begin{equation}\label{eq:MO-definition}
m(\varrho):=\lim_{x\to 0}\frac{\log\Phi_\varrho^0(x)}{\log x},
\quad
M(\varrho):=\lim_{x\to\infty}\frac{\log\Phi_\varrho^0(x)}{\log x}
\end{equation}
%%%
exist and $-\infty<m(\varrho)\le M(\varrho)<+\infty$. These numbers were
defined under some extra assumptions on $\varrho$ by Matuszewska
and Orlicz \cite{MO60,MO65} (see also \cite{M85} and \cite[Chapter~11]{M89}).
We refer to $m(\varrho)$ (resp. $M(\varrho)$) as the \textit{lower}
(resp. \textit{upper}) \textit{Matuszewska-Orlicz index} of $\varrho$.
For $\varrho(x)=x^\gamma$ one has $m(\varrho)=M(\varrho)=\gamma$.
Examples of functions $\varrho\in\W$ with $m(\varrho)<M(\varrho)$ can be
found, for instance, in \cite{AK89}, \cite[p.~93]{M89}, \cite[Section~2]{NSamko06}.

Fix pairwise distinct points $t_1,\dots,t_n\in\Gamma$ and functions
$w_1,\dots,w_n\in\W$. Consider the following weight
%%%
\begin{equation}\label{eq:weight}
w(t):=\prod_{k=1}^n w_k(|t-t_k|),
\quad
t\in\Gamma.
\end{equation}
%%%
Each function $w_k(|t-t_k|)$ is a radial oscillating weight. This is a
natural generalization of so-called Khvedelidze weights
$w(t)=\prod_{k=1}^n|t-t_k|^{\lambda_k}$, where $\lambda_k\in\R$
(see, e.g., \cite[Section~2.2]{BK97}, \cite[Section~1.4]{GK92},
\cite{K56}, \cite{KPS06}).
%%%%%%%%%%%%%%%%%%%%%%%%%%%%%%%%%%%%%%%%%%%%%%%%%%%%%%%%%%%%%%%%%%%%%%%%%%
\begin{theorem}\label{th:KSS}
Suppose $\Gamma$ is a Carleson Jordan curve and $p:\Gamma\to(1,\infty)$
is a continuous function satisfying
%%%
\begin{equation}\label{eq:Dini-Lipschitz}
|p(\tau)-p(t)|\le -A_\Gamma/\log|\tau-t|
\quad\mbox{whenever}\quad
|\tau-t|\le1/2,
\end{equation}
%%%
where $A_\Gamma$ is a positive constant depending only on $\Gamma$. Let
$w_1,\dots,w_n\in\W$ and the weight $w$ be given by \eqref{eq:weight}.
The Cauchy singular integral operator $S$ is bounded on $L^{p(\cdot)}(\Gamma,w)$
if and only if
%%%
\begin{equation}\label{eq:KSS-condition}
0<1/p(t_k)+m(w_k),\quad 1/p(t_k)+M(w_k)<1
\quad\mbox{for all}\quad
k\in\{1,\dots,n\}.
\end{equation}
\end{theorem}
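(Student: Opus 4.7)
My plan is to prove the two implications of Theorem~\ref{th:KSS} separately, using the Matuszewska--Orlicz indices to reduce the local behaviour of $w$ near each $t_k$ to that of a pair of comparison power weights, for which the classical Khvedelidze condition applies. This reduction is available because, thanks to $w_k\in\W$, the function $w_k$ is trapped near $0$ between two power functions whose exponents are controlled by $m(w_k)$ and $M(w_k)$.

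For the necessity, I would assume that $S$ is bounded on $L^{p(\cdot)}(\Gamma,w)$. The first ingredient is that the weighted space has to be a genuine Banach function space on which $S$ makes sense, which gives at least $w\in L^{p(\cdot)}$ near each $t_k$ and, by the duality pairing between $L^{p(\cdot)}(\Gamma,w)$ and $L^{q(\cdot)}(\Gamma,1/w)$, also $1/w\in L^{q(\cdot)}$ near each $t_k$. To turn mere integrability into the sharp index conditions I would test $S$ on characteristic functions of small subarcs of $\Gamma(t_k,R)$, use a direct computation of the Cauchy singular integral near $t_k$, and exploit the defining submultiplicative function $\Phi_{w_k}^0$; the quantitative operator estimate then forces $m(w_k)>-1/p(t_k)$ and $M(w_k)<1/q(t_k)=1-1/p(t_k)$, which are precisely the two inequalities in \eqref{eq:KSS-condition}.

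For the sufficiency, under \eqref{eq:KSS-condition} I would pick $\epsilon>0$ small and exponents $\alpha_k,\beta_k$ with
\[
-1/p(t_k)<\alpha_k<m(w_k)-\epsilon\le M(w_k)+\epsilon<\beta_k<1/q(t_k);
\]
by the defining property of the Matuszewska--Orlicz indices, the power weights $W_\pm(t):=\prod_k|t-t_k|^{\gamma_k^\pm}$, with $\gamma_k^\pm\in\{\alpha_k,\beta_k\}$, sandwich $w$ in a neighbourhood of each $t_k$ and satisfy the classical sharp condition. Hence $S$ is bounded on $L^{p(\cdot)}(\Gamma,W_\pm)$ by the already-established results for Khvedelidze weights on variable Lebesgue spaces over Carleson curves with Dini--Lipschitz exponents.

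The main obstacle, which I expect to be the delicate step, is transferring boundedness from $W_\pm$ to the original oscillating weight $w$: a pointwise sandwich between power weights does not by itself imply boundedness of $S$ on the sandwiched space, since the relevant $A_{p(\cdot)}$-type property is not monotone in $w$. I would bypass this using the change-of-weight trick --- boundedness of $S$ on $L^{p(\cdot)}(\Gamma,w)$ is equivalent to boundedness of $f\mapsto wS(f/w)$ on the unweighted $L^{p(\cdot)}(\Gamma)$ --- combined with a smooth partition of unity localizing the problem around each $t_k$. The Dini--Lipschitz condition \eqref{eq:Dini-Lipschitz} renders $p$ essentially constant equal to $p(t_k)$ on arbitrarily small arcs around $t_k$, so the local model operator reduces to a Mellin convolution on a standard $L^{p(t_k)}$ space weighted by $w_k$, whose boundedness is controlled by $m(w_k)$ and $M(w_k)$ exactly through \eqref{eq:KSS-condition}, paralleling the B\"ottcher--Karlovich analysis in the standard Lebesgue setting.
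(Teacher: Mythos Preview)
Your proposal diverges from the paper in both directions, and the sufficiency half has a genuine gap.

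\textbf{Sufficiency.} The paper does not prove this implication; it is quoted from Kokilashvili, N.~Samko and S.~Samko \cite[Theorem~4.3]{KSS07}. Your attempted self-contained argument stalls exactly where you anticipate. The sandwich $W_-\lesssim w\lesssim W_+$ is correct, and you rightly note that it does not transfer boundedness of $S$. But your fix---localize, freeze $p$ at $p(t_k)$, and reduce to a Mellin convolution on a constant-exponent $L^{p(t_k)}$---is not justified by the Dini--Lipschitz condition \eqref{eq:Dini-Lipschitz}. That condition guarantees only that $|\tau-t_k|^{p(\tau)-p(t_k)}$ stays between two positive constants; it does \emph{not} make the $L^{p(\cdot)}$ and $L^{p(t_k)}$ norms locally equivalent, nor does it turn the local piece of $wS(w^{-1}\,\cdot)$ into a genuine Mellin convolution. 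The passage from variable to constant exponent in this setting requires either the extrapolation/maximal-function machinery used in \cite{KSS07} or an $A_{p(\cdot)}$-type argument; a partition of unity plus ``$p$ is essentially constant'' does not bridge the gap.

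\textbf{Necessity.} Here your direct testing idea is plausible but much heavier than what the paper does, and your sketch does not explain how testing on characteristic functions yields the \emph{strict} inequalities rather than the non-strict ones (the latter follow already from $w\in A_{p(\cdot)}(\Gamma)$). The paper's route is short and indirect: first, boundedness of $S$ forces $w\in A_{p(\cdot)}(\Gamma)$ (Theorem~\ref{th:Ap}), which by Lemma~\ref{le:indicator-existence}(b) gives $0\le 1/p(t)+\alpha(V_t^0w)$ and $1/p(t)+\beta(V_t^0w)\le 1$. Second, Theorem~\ref{th:radial-MO} identifies $\alpha(V_{t_k}^0w)=m(w_k)$ and $\beta(V_{t_k}^0w)=M(w_k)$. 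Third---and this is the clever step---the general Fredholm necessity condition of \cite[Theorem~8.1]{Karlovich03} (Theorem~\ref{th:Nakano-necessity} here) is applied with $a\equiv 1$: since $aP+Q=I$ is invertible, the quantity $1/p(t)+\theta\,\alpha(V_t^0w)+(1-\theta)\,\beta(V_t^0w)$ must avoid $\Z$ for all $\theta\in[0,1]$, which upgrades the non-strict inequalities to strict ones. No testing on arcs is needed.
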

%%%%%%%%%%%%%%%%%%%%%%%%%%%%%%%%%%%%%%%%%%%%%%%%%%%%%%%%%%%%%%%%%%%%%%%%%%
For $w(t)=\prod_{k=1}^n|t-t_k|^{\lambda_k}$, \eqref{eq:KSS-condition}
reads as $0<1/p(t_k)+\lambda_k<1$ for all $k\in\{1,\dots,n\}$ and for these
weights Theorem~\ref{th:KSS} is obtained in \cite{KPS06}. The sufficiency
portion of Theorem~\ref{th:KSS} in the form stated above is obtained
by V.~Kokilashvili, N.~Samko, and S.~Samko \cite[Theorem~4.3]{KSS07}.
The necessity portion is a new result. It will be proved in Section~\ref{sec:4.1}.

We define by $PC(\Gamma)$ as the set of all $a\in L^\infty(\Gamma)$ for which
the one-sided limits
\[
a(t\pm 0):=\lim_{\tau\to t\pm 0}a(\tau)
\]
exist and finite at each point $t\in\Gamma$; here $\tau\to t-0$ means that $\tau$
approaches $t$ following the orientation of $\Gamma$, while $\tau\to t+0$
means that $\tau$ goes to $t$ in the opposite direction. Functions in $PC(\Gamma)$
are called \textit{piecewise continuous} functions.

The operator $S$ is defined on $L_N^{p(\cdot)}(\Gamma,w)$ elementwise.
We let stand $PC_{N\times N}(\Gamma)$ for the algebra of all $N\times N$
matrix functions with entries in $PC(\Gamma)$. Writing the elements of
$L_N^{p(\cdot)}(\Gamma,w)$ as columns, we can define the multiplication
operator $aI$ for $a\in PC_{N\times N}(\Gamma)$ as multiplication by the
matrix function $a$. Let $\alg(S,PC,L_N^{p(\cdot)}(\Gamma,w))$ denote the
smallest closed subalgebra of $\cB(L_N^{p(\cdot)}(\Gamma,w))$ containing
the operator $S$ and the set $\{aI:a\in PC_{N\times N}(\Gamma)\}$.

For the case of constant $p\in(1,\infty)$, Khvedelidze weights $w$, and
piecewise Lyapunov curves $\Gamma$, the algebra $\alg(S,PC,L_N^p(\Gamma,w))$
was well understood in 1970--1980's (see e.g. \cite{GK71,GK92,Krupnik87,RS90,SM86}).
In the earlier 1990's, Spitkovsky \cite{Spitkovsky92} discovered that local
spectra of singular integral operators on Lebesgue spaces with Muckenhoupt
weights on smooth curves have a shape of horns bounded by two circular arcs
depending on the so-called indices of powerlikeness of the weight. In the
middle of 1990's, B\"ottcher, Yu.~Karlovich, and Rabinovich further observed
that these horns metamorphose to spiralic horns bounded by logarithmic double
spirals \cite{BK95,BKR96,Rabinovich96} if one passes from nice curves to
Carleson curves satisfying \eqref{eq:spiralic}. These spiralic horns become
even more interesting creatures (so-called ``general leaves") in case of
arbitrary Carleson curves \cite{BK97,BK01}. The shape of a general leaf depends
on the spirality indices of the curve and the indices of powerlikeness of the
weight. The Fredholm theory for the algebra $\alg(S,PC,L_N^p(\Gamma,w))$
under the most general conditions on the curve $\Gamma$ and the weight $w$
is constructed by B\"ottcher and Yu.~Karlovich and is presented in the monograph
\cite{BK97} (although, we advise to start the study of this theory from the nice
survey \cite{BK01}).

Partially, the results of \cite{BK97} are extended to Orlicz spaces
\cite{Karlovich96}, and further to rearrangement-invariant spaces \cite{Karlovich98},
and rearrangement-invariant spaces with Muckenhoupt weights \cite{Karlovich02}.
The results of these papers resemble those of \cite{BK97} with one important
difference: the number $1/p$ is replaced by the Boyd indices $\alpha_X,\beta_X$
of the rearrangement-invariant space. Since the Boyd indices may be different,
one more factor (a general space) leads to massive local spectra (for standard
Lebesgue spaces these factors were general weights and general curves). Local
spectra of singular integral operators can be massive also on weighted H\"older
spaces on Carleson curves \cite{RSS06}.

The study of singular integral operators (SIOs) with discontinuous coefficients
on weighted variable Lebesgue spaces on sufficiently nice curves was started
in \cite{Karlovich03,Karlovich06,KPS05,KS03}. A Fredholm criterion for an
arbitrary operator in $\alg(S,PC,L_N^{p(\cdot)}(\Gamma,w))$ for Carleson curves
satisfying \eqref{eq:spiralic} and Khvedelidze weights is proved in
\cite{Karlovich05}. Local spectra in that paper have a shape of logarithmic
double spirals and cannot be massive. Under the same assumptions on the curve
and the weight, it is proved in \cite{Karlovich07} that every semi-Fredholm
operator in the algebra $\alg(S,PC,L_N^{p(\cdot)}(\Gamma,w))$ is Fredholm.

In this paper we extend the results of \cite{Karlovich05,Karlovich06,Karlovich07} to the
case of Carleson curves satisfying \eqref{eq:spiralic} and weights of the form
\eqref{eq:weight} satisfying the conditions of Theorem~\ref{th:KSS}.
Local spectra of SIOs
corresponding to the points $t\in\Gamma\setminus\{t_1,\dots,t_n\}$
are logarithmic double spirals depending on $\delta(t)$ and $1/p(t)$.
These spirals blow up to spiralic horns at $t=t_k$ if $k\in\{1,\dots,n\}$
and $m(w_k)<M(w_k)$. These spiralic horns are bounded by two logarithmic double
spirals depending on $\delta(t_k)$ and on $1/p(t_k)+m(w_k)$ and $1/p(t_k)+M(w_k)$,
respectively. Up to our knowledge, this paper is the first work, where
massive local spectra of singular integral operators appear in the setting of
weighted variable Lebesgue spaces. These results resemble those of
\cite{BK95,BKR96} for weighted standard Lebesgue spaces, although the weights
considered in \cite{BK95,BKR96} are more general than in the present paper.

The paper is organized as follows. In Section~\ref{sect:2} we collect all
necessary information on indices of submultiplicative functions associated
with curves and weights. We prove that the indices of powerlikeness (see
\cite[Chap.~3]{BK97}) of the weight \eqref{eq:weight} at $t_k$ coincide
with the Matuszewska-Orlicz indices of $w_k$. Section~\ref{sect:3} contains
standard results on singular integral operators with $L^\infty$ coefficients:
the necessary condition for Fredholmness, the local principle of Simonenko
type, and the theorem on a Wiener-Hopf factorization. In Section~\ref{sect:4},
we prove a Fredholm criterion for a singular integral operator $aP+Q$, where
$a\in PC(\Gamma)$ and $P:=(I+S)/2$, $Q:=(I-S)/2$.
In a sense, the main result of Section~\ref{sect:4} is the heart of the paper.
It follows from a more general necessary condition for the Fredholmness of $aP+Q$
(see \cite[Theorem~8.1]{Karlovich03}) and a sufficient condition for
the Fredholmness of $aP+Q$, whose proof is based on the local principle of
Simonenko type and the Wiener-Hopf factorization of a local representative
of $a$.
%%%
In fact, \cite[Therem~8.1]{Karlovich03} together with the results of
Section~\ref{sect:2} imply the necessity of the conditions \eqref{eq:KSS-condition}
for the boundedness of the operator $S$ on $L^{p(\cdot)}(\Gamma,w)$.
%%%
Section~\ref{sect:5} contains the Allan-Douglas local principle
and the two projections theorem. These results are our main tools in the
construction of a symbols calculus for the algebra $\alg(S,PC,L_N^{p(\cdot)}(\Gamma,w))$
on the basis of the main result of Section~\ref{sect:4}. In Section~\ref{sect:6},
following the well known scheme (see e.g. \cite{BK95}, \cite[Chap.~8]{BK97}
and also \cite{Karlovich96,Karlovich02,Karlovich05,Karlovich06}),
we prove a Fredholm criterion for an arbitrary operator
$A\in\alg(S,PC,L_N^{p(\cdot)}(\Gamma,w))$. Finally, we collect some
remarks on index  formulas and semi-Fredholm operators in the algebra
$\alg(S,PC,L_N^{p(\cdot)}(\Gamma,w))$ and discuss open problems.
%%%%%%%%%%%%%%%%%%%%%%%%%%%%%%%%%%%%%%%%%%%%%%%%%%%%%%%%%%%%%%%%%%%%%%%%%%%%
\section{Submultiplicative functions and their indices}
\label{sect:2}
\subsection{Submultiplicative functions}
Following \cite[Section~1.4]{BK97}, we say a function
$\Phi:(0,\infty)\to(0,\infty]$ is \textit{regular} if it is bounded in an
open neighborhood of $1$. A function $\Phi:(0,\infty)\to(0,\infty]$ is said
to be \textit{submultiplicative} if
\[
\Phi(xy)\le\Phi(x)\Phi(y)
\quad\mbox{for all}\quad x,y\in(0,\infty).
\]
It is easy to show that if $\Phi$ is regular and submultiplicative, then
$\Phi$ is bounded away from zero in some open neighborhood of $1$.
Moreover, in this case $\Phi(x)$ is finite for all $x\in(0,\infty)$.
Given a regular and submultiplicative function $\Phi:(0,\infty)\to(0,\infty)$,
one defines
\[
\alpha(\Phi):=\sup_{x\in(0,1)}\frac{\log\Phi(x)}{\log x},
\quad
\beta(\Phi):=\inf_{x\in(1,\infty)}\frac{\log\Phi(x)}{\log x}.
\]
Clearly, $-\infty<\alpha(\Phi)$ and $\beta(\Phi)<\infty$.
%%%%%%%%%%%%%%%%%%%%%%%%%%%%%%%%%%%%%%%%%%%%%%%%%%%%%%%%%%%%%%%%%%%%%%%%%%%
\begin{theorem}[see \cite{BK97}, Theorem~1.13 or \cite{KPS82}, Chap.~2, Theorem~1.3]
\label{th:submult}
If a function $\Phi:(0,\infty)\to(0,\infty)$ is regular and submultiplicative, then
\[
\alpha(\Phi)=\lim_{x\to 0}\frac{\log\Phi(x)}{\log x},
\quad
\beta(\Phi)=\lim_{x\to\infty}\frac{\log\Phi(x)}{\log x}
\]
and $-\infty<\alpha(\Phi)\le\beta(\Phi)<+\infty$.
\end{theorem}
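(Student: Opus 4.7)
The strategy is a classical Fekete-type argument adapted from the subadditive to the submultiplicative setting. First I would observe that regularity together with submultiplicativity forces $\Phi$ to be finite everywhere on $(0,\infty)$ and bounded on every compact subinterval: if $\Phi\le M$ on some open neighborhood $U$ of $1$, then for any $x\in(0,\infty)$ the inequality $\Phi(xu)\le \Phi(x)\Phi(u)$ bounds $\Phi$ on the compact set $xU$ by $M\Phi(x)$, and any compact $K\subset(0,\infty)$ can be covered by finitely many such sets. This gives in particular $\Phi(x)<\infty$ for all $x$ and $\alpha(\Phi)>-\infty$, $\beta(\Phi)<\infty$ (these bounds follow from $\Phi$ being bounded on a neighborhood of $1$).

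For the formula defining $\beta(\Phi)$, I fix $x_0\in(1,\infty)$ and, given $y$ large, write $y=x_0^n r$ where $n:=\lfloor \log y/\log x_0\rfloor$ and $r\in[1,x_0)$. Submultiplicativity iterated $n$ times gives $\Phi(y)\le \Phi(x_0)^n\Phi(r)$; taking logarithms and dividing by $\log y$ leads to
\[
\frac{\log \Phi(y)}{\log y}\;\le\;\frac{n\log x_0}{\log y}\cdot\frac{\log \Phi(x_0)}{\log x_0}\;+\;\frac{\log \Phi(r)}{\log y}.
\]
As $y\to\infty$, the first prefactor tends to $1$ and, because $\Phi$ is bounded on the compact set $[1,x_0]$, the second summand tends to $0$. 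Hence $\limsup_{y\to\infty}\log\Phi(y)/\log y\le\log\Phi(x_0)/\log x_0$, and since $x_0>1$ is arbitrary this yields $\limsup\le\beta(\Phi)$; the reverse inequality $\liminf\ge\beta(\Phi)$ is immediate from the definition of $\beta(\Phi)$ as an infimum, so the limit exists and coincides with $\beta(\Phi)$.

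For $\alpha(\Phi)$ I would apply the same result to the auxiliary function $\widetilde{\Phi}(x):=\Phi(1/x)$, which is again regular and submultiplicative. The substitution $y=1/x$ gives $\log\Phi(x)/\log x=-\log\widetilde{\Phi}(y)/\log y$, from which $\alpha(\Phi)=-\beta(\widetilde{\Phi})$ and $\lim_{x\to 0}\log\Phi(x)/\log x=-\lim_{y\to\infty}\log\widetilde{\Phi}(y)/\log y$; combining these with the previous step yields the desired formula. Finally, to show $\alpha(\Phi)\le\beta(\Phi)$, I note $\Phi(1)\le\Phi(1)^2$, hence $\Phi(1)\ge 1$, so $1\le \Phi(x)\Phi(1/x)$ for every $x\in(0,1)$. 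Dividing $0\le\log\Phi(x)+\log\Phi(1/x)$ by $\log x<0$ gives $\log\Phi(x)/\log x\le\log\Phi(1/x)/\log(1/x)$; taking supremum over $x\in(0,1)$ on the left and infimum over $1/x\in(1,\infty)$ on the right produces $\alpha(\Phi)\le\beta(\Phi)$.

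The only genuinely non-routine step is the Fekete-type estimate in the second paragraph; once the error term $\log\Phi(r)/\log y$ is controlled via boundedness on compact subsets (which is where regularity is essential), the remaining items are purely formal manipulations with the definitions.
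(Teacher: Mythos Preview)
The paper does not actually prove this theorem; it merely cites \cite[Theorem~1.13]{BK97} and \cite[Chap.~2, Theorem~1.3]{KPS82}. Your Fekete-type argument is precisely the standard proof found in those references, and the main steps (boundedness on compacta, the decomposition $y=x_0^n r$, and the passage to $\widetilde{\Phi}(x)=\Phi(1/x)$) are carried out correctly.

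There is one small slip. In the last paragraph you derive, for each $x\in(0,1)$, the inequality
\[
\frac{\log\Phi(x)}{\log x}\le\frac{\log\Phi(1/x)}{\log(1/x)},
\]
and then claim that taking the supremum on the left and the infimum on the right yields $\alpha(\Phi)\le\beta(\Phi)$. That inference is not valid in general: from $f(x)\le g(x)$ one cannot conclude $\sup f\le\inf g$. The fix is immediate, however, since you have already established that both sides converge as $x\to 0$ (equivalently $1/x\to\infty$); simply pass to the limit in the displayed inequality to obtain $\alpha(\Phi)\le\beta(\Phi)$. A second minor point: to ensure $\log\Phi(r)/\log y\to 0$ you need $\log\Phi(r)$ bounded on $[1,x_0]$, i.e.\ $\Phi$ bounded \emph{below} as well as above there; this follows from $\Phi(r)\ge\Phi(1)/\Phi(1/r)$ together with the upper bound on compacta you already have.
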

%%%%%%%%%%%%%%%%%%%%%%%%%%%%%%%%%%%%%%%%%%%%%%%%%%%%%%%%%%%%%%%%%%%%%%%%%%
The quantities $\alpha(\Phi)$ and $\beta(\Phi)$ are called the \textit{lower}
and  \textit{upper indices of the regular and submultiplicative function}
$\Phi$, respectively.
%%%%%%%%%%%%%%%%%%%%%%%%%%%%%%%%%%%%%%%%%%%%%%%%%%%%%%%%%%%%%%%%%%%%%%%%%%%%
\subsection{Matuszewska-Orlicz indices}
\label{sect:MO}
%%%%%%%%%%%%%%%%%%%%%%%%%%%%%%%%%%%%%%%%%%%%%%%%%%%%%%%%%%%%%%%%%%%%%%%%%%%%
\begin{lemma}\label{le:Phi-submult}
Let $A>0$ and $\varrho:(0,A]\to(0,\infty)$ be a continuous function. Then for
every $B\in(0,A]$ the function
%%%
\begin{equation}\label{eq:Phi-submult}
\Phi_{\varrho,B}(x):=\left\{
\begin{array}{lll}
\displaystyle
\sup_{0<y\le B}\frac{\varrho(xy)}{\varrho(y)} &\mbox{if}& x\in(0,1],
\\[3mm]
\displaystyle
\sup_{0<y\le B}\frac{\varrho(y)}{\varrho(x^{-1}y)} &\mbox{if}& x\in(1,\infty)
\end{array}
\right.
\end{equation}
%%%
is submultiplicative.
\end{lemma}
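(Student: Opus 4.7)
The plan is to verify the inequality $\Phi_{\varrho,B}(xy)\le\Phi_{\varrho,B}(x)\Phi_{\varrho,B}(y)$ directly from the piecewise definition \eqref{eq:Phi-submult} by case analysis on where $x$, $y$, and $xy$ lie relative to $1$. Each case amounts to inserting an intermediate ratio and then checking that the inserted argument still belongs to $(0,B]$, so that the two resulting ratios can be dominated by the corresponding suprema in \eqref{eq:Phi-submult}.

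First I would handle the two pure cases. If $x,y\in(0,1]$, then for any $z\in(0,B]$ we have $yz\le z\le B$, and the identity
\[
\frac{\varrho(xyz)}{\varrho(z)}=\frac{\varrho(x(yz))}{\varrho(yz)}\cdot\frac{\varrho(yz)}{\varrho(z)}
\]
combined with the $x\in(0,1]$ branch of the definition (applied first to the variable $yz$ and then to $z$) gives the desired bound after taking the supremum in $z$. The case $x,y\in(1,\infty)$ is symmetric: one writes
\[
\frac{\varrho(z)}{\varrho(x^{-1}y^{-1}z)}=\frac{\varrho(z)}{\varrho(y^{-1}z)}\cdot\frac{\varrho(z')}{\varrho(x^{-1}z')}
\]
with $z':=y^{-1}z$, noting that $z'\le z\le B$ since $y>1$, and then invokes the $x\in(1,\infty)$ branch twice.

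The mixed case $x\in(0,1]$, $y\in(1,\infty)$ (and symmetrically with the roles swapped) is the delicate one and must be split further according to whether $xy\le 1$ or $xy>1$, because the left-hand side is then computed by different branches of \eqref{eq:Phi-submult}. If $xy\le 1$, I would use
\[
\frac{\varrho(xyz)}{\varrho(z)}=\frac{\varrho(xyz)}{\varrho(xz)}\cdot\frac{\varrho(xz)}{\varrho(z)},
\]
observing that $xz\in(0,B]$ (since $x\le 1$) bounds the second factor by $\Phi_{\varrho,B}(x)$, while for the first factor one sets $w:=xyz$ and checks $w\le B$ (because $xy\le 1$), so that $\varrho(w)/\varrho(y^{-1}w)\le\Phi_{\varrho,B}(y)$. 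If instead $xy>1$, the argument parallels the pure case $x,y>1$: split
\[
\frac{\varrho(z)}{\varrho(x^{-1}y^{-1}z)}=\frac{\varrho(z)}{\varrho(y^{-1}z)}\cdot\frac{\varrho(z')}{\varrho(x^{-1}z')}
\]
with $z':=y^{-1}z$, verify $z'\le B$ using $y>1$, and recognize the second factor as an admissible ratio for $\Phi_{\varrho,B}(x)$ via the $x\in(0,1]$ branch.

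The main obstacle is purely bookkeeping: for each of the four (sub)cases one must pick the right insertion point so that both auxiliary arguments fall into $(0,B]$ and match the branch of \eqref{eq:Phi-submult} required at the corresponding value of $x$ or $y$. Continuity of $\varrho$ plays no role in the inequality itself; it only ensures that the ratios are well-defined. Taking suprema at the very end in each case yields $\Phi_{\varrho,B}(xy)\le\Phi_{\varrho,B}(x)\Phi_{\varrho,B}(y)$.
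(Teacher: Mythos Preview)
Your case analysis is correct and is precisely the standard argument the paper has in mind when it defers to \cite[Lemma~1.15]{BK97}; the paper gives no independent proof. One small point worth tightening in the subcase $x\le 1$, $y>1$, $xy>1$: to bound the second factor $\varrho(z')/\varrho(x^{-1}z')$ by $\Phi_{\varrho,B}(x)$ via the $x\le 1$ branch you actually need $x^{-1}z'=(xy)^{-1}z\le B$, not merely $z'\le B$, but this is immediate from $xy>1$.
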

%%%%%%%%%%%%%%%%%%%%%%%%%%%%%%%%%%%%%%%%%%%%%%%%%%%%%%%%%%%%%%%%%%%%%%%%%%%%
This statement is proved similarly to \cite[Lemma~1.15]{BK97}.
%%%%%%%%%%%%%%%%%%%%%%%%%%%%%%%%%%%%%%%%%%%%%%%%%%%%%%%%%%%%%%%%%%%%%%%%%%%%
\begin{lemma}\label{le:Phi-Phi-0}
Let $A>0$ and $\varrho:(0,A]\to(0,\infty)$ be a continuous function. If for
some $B\in(0,A]$ the function $\Phi_{\varrho,B}$ given by \eqref{eq:Phi-submult}
is regular, then
\[
\Phi_\varrho^0(x):=\limsup_{y\to 0}\frac{\varrho(xy)}{\varrho(y)}
\]
is regular and submultiplicative and both functions have the same lower and
upper indices
\[
\alpha(\Phi_{\varrho,B})=\alpha(\Phi_\varrho^0)=:m(\varrho),
\quad
\beta(\Phi_{\varrho,B})=\beta(\Phi_\varrho^0)=:M(\varrho).
\]
\end{lemma}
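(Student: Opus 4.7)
The plan is to first establish the pointwise bound $\Phi_\varrho^0\le\Phi_{\varrho,B}$, then transfer regularity, check submultiplicativity of $\Phi_\varrho^0$ directly, and finally compare their indices in both directions, with the reverse direction being the main hurdle.

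For the pointwise bound, for $x\in(0,1]$ I would observe directly that $\Phi_\varrho^0(x)=\limsup_{y\to 0}\varrho(xy)/\varrho(y)\le\sup_{0<y\le B}\varrho(xy)/\varrho(y)=\Phi_{\varrho,B}(x)$. For $x>1$, the substitution $z=x^{-1}y$ in the second branch of \eqref{eq:Phi-submult} rewrites $\Phi_{\varrho,B}(x)=\sup_{0<z\le B/x}\varrho(xz)/\varrho(z)$, which again dominates the limsup as $y\to 0$. Since $\Phi_{\varrho,B}$ is finite everywhere and bounded in a neighborhood of $1$, so is $\Phi_\varrho^0$. Submultiplicativity of $\Phi_\varrho^0$ follows from the factorization $\varrho(xyz)/\varrho(z)=[\varrho(x\cdot yz)/\varrho(yz)]\cdot[\varrho(yz)/\varrho(z)]$, the elementary inequality $\limsup_{z\to 0}(fg)\le(\limsup f)(\limsup g)$ for non-negative $f,g$ with finite limsup, and the change of variables $w=yz$ identifying the limsup of the first factor as $\Phi_\varrho^0(x)$. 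Consequently Theorem~\ref{th:submult} applies to $\Phi_\varrho^0$, so $\alpha(\Phi_\varrho^0)$ and $\beta(\Phi_\varrho^0)$ are genuine limits at $0$ and $\infty$.

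The easy halves of the index equalities follow immediately from the pointwise bound: taking $\log$, dividing by $\log x$, and watching signs yields $\alpha(\Phi_\varrho^0)\ge\alpha(\Phi_{\varrho,B})$ and $\beta(\Phi_\varrho^0)\le\beta(\Phi_{\varrho,B})$.

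The main obstacle is the reverse direction, where a telescoping along geometric progressions is required. Fix $x\in(0,1)$ and $\varepsilon>0$; by definition of $\Phi_\varrho^0$ choose $\delta\in(0,B]$ so small that $\varrho(xw)/\varrho(w)\le\Phi_\varrho^0(x)+\varepsilon$ for all $w\in(0,\delta]$. For $y\in(0,B]$, write
\[
\frac{\varrho(x^ny)}{\varrho(y)}=\prod_{k=0}^{n-1}\frac{\varrho(x^{k+1}y)}{\varrho(x^ky)}.
\]
Since $x<1$, the sequence $x^ky$ is decreasing, so the number of indices $k\in\{0,\dots,n-1\}$ with $x^ky>\delta$ is bounded by a constant $k_0$ depending only on $\delta,B,x$; those factors are controlled by a constant $M$ coming from the continuity and positivity of $\varrho$ on the compact subinterval $[x\delta,B]\subset(0,\infty)$, while the remaining at least $n-k_0$ factors are bounded by $\Phi_\varrho^0(x)+\varepsilon$. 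Taking the supremum over $y\in(0,B]$ yields $\Phi_{\varrho,B}(x^n)\le C(\Phi_\varrho^0(x)+\varepsilon)^n$ with $C$ independent of $n$. Dividing by $\log x^n$ and letting $n\to\infty$ so that $x^n\to 0$, Theorem~\ref{th:submult} produces $\alpha(\Phi_{\varrho,B})\ge\log(\Phi_\varrho^0(x)+\varepsilon)/\log x$; sending $\varepsilon\to 0$ and taking the supremum over $x\in(0,1)$ gives $\alpha(\Phi_{\varrho,B})\ge\alpha(\Phi_\varrho^0)$. The symmetric telescoping for $x>1$, using $\Phi_{\varrho,B}(x^n)=\sup_{0<z\le B/x^n}\varrho(x^nz)/\varrho(z)$ (in which $z$ is automatically small, so only a bounded number of factors $\varrho(x^{k+1}z)/\varrho(x^kz)$ fall outside the regime $x^kz\le\delta$), yields $\beta(\Phi_{\varrho,B})\le\beta(\Phi_\varrho^0)$ and closes the proof.
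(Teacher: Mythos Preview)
Your proof is correct and complete. The paper does not actually prove this lemma; it only states that it ``is proved by analogy with \cite[Lemma~2(a)]{Boyd71} (see also \cite[Theorem~8.18]{BS88} and \cite[Lemma~1.16]{BK97}).'' Your argument---the pointwise bound $\Phi_\varrho^0\le\Phi_{\varrho,B}$ for the easy inequalities, followed by the geometric telescoping $\varrho(x^ny)/\varrho(y)=\prod_{k=0}^{n-1}\varrho(x^{k+1}y)/\varrho(x^ky)$ to obtain $\Phi_{\varrho,B}(x^n)\le C(\Phi_\varrho^0(x)+\varepsilon)^n$ for the reverse inequalities---is precisely the classical Boyd iteration referenced there, so you have supplied the proof the paper omits.
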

%%%%%%%%%%%%%%%%%%%%%%%%%%%%%%%%%%%%%%%%%%%%%%%%%%%%%%%%%%%%%%%%%%%%%%%%%%%%
This statement is proved by analogy with \cite[Lemma~2(a)]{Boyd71}
(see also \cite[Theorem~8.18]{BS88} and \cite[Lemma~1.16]{BK97}).
%%%%%%%%%%%%%%%%%%%%%%%%%%%%%%%%%%%%%%%%%%%%%%%%%%%%%%%%%%%%%%%%%%%%%%%%%%%%
\begin{lemma}\label{le:Phi-regular}
If $\varrho\in\W$, then for every $B\in(0,|\Gamma|]$ the function $\Phi_{\varrho,B}$
is regular.
\end{lemma}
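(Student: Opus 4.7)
The plan is to unpack the definition of the class $\W$ and bound $\Phi_{\varrho,B}$ separately on the regimes $(0,1]$ and $(1,\infty)$. By hypothesis there exist $\alpha,\beta\in\R$ such that $\varphi(x):=x^\alpha\varrho(x)$ and $\psi(x):=x^\beta/\varrho(x)$ both belong to $W$; in particular each is almost increasing on $[0,|\Gamma|]$, with constants that I will call $C_\varphi$ and $C_\psi$. Regularity of $\Phi_{\varrho,B}$ amounts to showing it is bounded on an open interval containing $1$, and I expect to produce two power-type majorants — one valid on $(0,1]$, one on $(1,\infty)$ — both of which are continuous and finite at $x=1$.

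For the first regime, fix $x\in(0,1]$ and $y\in(0,B]$. Since $xy\le y$, the almost-increasing property of $\varphi$ yields
\[
(xy)^\alpha\varrho(xy)\le C_\varphi\, y^\alpha\varrho(y),
\]
which rearranges to $\varrho(xy)/\varrho(y)\le C_\varphi\, x^{-\alpha}$. Taking the supremum over $y\in(0,B]$ gives $\Phi_{\varrho,B}(x)\le C_\varphi\, x^{-\alpha}$ on $(0,1]$.

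For the second regime, fix $x>1$ and $y\in(0,B]$. Now $x^{-1}y\le y$, so the almost-increasing property of $\psi$ gives
\[
(x^{-1}y)^\beta/\varrho(x^{-1}y)\le C_\psi\, y^\beta/\varrho(y),
\]
which rearranges to $\varrho(y)/\varrho(x^{-1}y)\le C_\psi\, x^\beta$. Taking the supremum in $y$ yields $\Phi_{\varrho,B}(x)\le C_\psi\, x^\beta$ on $(1,\infty)$. Combined with the previous bound, $\Phi_{\varrho,B}$ is bounded on, say, $(1/2,2)$, hence regular.

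The main (and quite minor) obstacle is simply bookkeeping: one must pair the correct element of $W$ with the correct regime so that the factor $y$ cancels out under the supremum and what survives is a pure power of $x$. In particular, $\varphi$ handles the case $xy\le y$ (i.e.\ $x\le1$) while $\psi$ handles the case $x^{-1}y\le y$ (i.e.\ $x\ge1$); no further property of $W$ beyond almost monotonicity is needed, so the argument is essentially mechanical once the decomposition furnished by $\varrho\in\W$ is written down.
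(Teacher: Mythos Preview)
Your proof is correct and follows essentially the same route as the paper: unpack the definition of $\W$ to get $x^\alpha\varrho(x)$ and $x^\beta/\varrho(x)$ almost increasing, then use each in the appropriate regime to produce the power-type bounds $\Phi_{\varrho,B}(x)\le C_\varphi x^{-\alpha}$ on $(0,1]$ and $\Phi_{\varrho,B}(x)\le C_\psi x^\beta$ on $(1,\infty)$. The only cosmetic difference is that the paper first dominates $\Phi_{\varrho,B}$ by $\Phi_{\varrho,|\Gamma|}$ and bounds the latter, whereas you work directly with $y\in(0,B]$; this changes nothing.
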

%%%%%%%%%%%%%%%%%%%%%%%%%%%%%%%%%%%%%%%%%%%%%%%%%%%%%%%%%%%%%%%%%%%%%%%%%%%%
\begin{proof}
Since $\varrho\in\W$, there exist $a,b\in\R$ such that the functions
$x^a\varrho(x)$ and $x^b/\varrho(x)$ are almost increasing on $(0,|\Gamma|]$,
that is, there exist positive constants $c_a,c_b$ such that
$x^a\varrho(x)\le c_ay^a\varrho(y)$ and $x^b/\varrho(x)\le c_by^b/\varrho(y)$
whenever $x\le y$ and $x,y\in(0,|\Gamma|]$. Suppose $x\in(0,1]$ and
$y\in(0,|\Gamma|]$. Then
\[
\frac{\varrho(xy)}{\varrho(y)}=\frac{(xy)^a\varrho(xy)}{(xy)^a\varrho(y)}
\le
\frac{c_ay^a\varrho(y)}{(xy)^a\varrho(y)}=\frac{c_a}{x^a}.
\]
Hence
\[
\Phi_{\varrho,B}(x)\le\Phi_{\varrho,|\Gamma|}(x)\le c_ax^{-a},
\quad
x\in(0,1].
\]
Similarly,
\[
\Phi_{\varrho,B}(x)\le\Phi_{\varrho,|\Gamma|}(x)\le c_bx^b,
\quad
x\in(1,\infty).
\]
Thus, $\Phi_{\varrho,B}$ is regular for every $B\in(0,|\Gamma|]$.
\end{proof}
%%%%%%%%%%%%%%%%%%%%%%%%%%%%%%%%%%%%%%%%%%%%%%%%%%%%%%%%%%%%%%%%%%%%%%%%%%%%
From the above results and Theorem~\ref{th:submult} we conclude that if
$\varrho\in\W$, then its Matuszewska-Orlicz indices are well defined by
\eqref{eq:MO-definition}.
%%%%%%%%%%%%%%%%%%%%%%%%%%%%%%%%%%%%%%%%%%%%%%%%%%%%%%%%%%%%%%%%%%%%%%%%%%%%
\subsection{Spirality indices of Carleson curves}
Fix $t\in\Gamma$ and put
\[
d_t:=\max_{\tau\in\Gamma}|\tau-t|.
\]
Suppose $\psi:\Gamma\setminus\{t\}\to(0,\infty)$ is a continuous function
and consider
\[
F_{\psi,t}(R_1,R_2):=\max_{\tau\in\Gamma,|\tau-t|=R_1}\psi(\tau)\Big/
\min_{\tau\in\Gamma,|\tau-t|=R_2}\psi(\tau),
\quad
R_1,R_2\in(0,d_t].
\]
By \cite[Lemma~1.15]{BK97}, the function
\[
(W_t\psi)(x):=\left\{\begin{array}{lll}
\displaystyle
\sup_{0<R\le d_t}F_{\psi,t}(xR,R) & \mbox{if} &x\in(0,1],\\
\displaystyle
\sup_{0<R\le d_t}F_{\psi,t}(R,x^{-1}R) & \mbox{if} & x\in(1,\infty)
\end{array}
\right.
\]
is submultiplicative. For $t\in\Gamma$, we have,
\[
\tau- t=|\tau-t|e^{i\arg(\tau-t)},
\quad
\tau\in\Gamma\setminus\{t\},
\]
and the argument $\arg(\tau-t)$ may be chosen to be continuous on
$\Gamma\setminus\{t\}$. Consider
\[
\eta_t(\tau):=e^{-\arg(\tau-t)}.
\]
%%%%%%%%%%%%%%%%%%%%%%%%%%%%%%%%%%%%%%%%%%%%%%%%%%%%%%%%%%%%%%%%%%%%%%%%%%%
\begin{lemma}[see \cite{BK97}, Theorem~1.18]
\label{le:spirality-existence}
If $\Gamma$ is a Carleson Jordan curve, then for every $t\in\Gamma$
the function $W_t\eta_t$ is regular and submultiplicative. If, in addition,
it satisfies \eqref{eq:spiralic} at some point $t\in\Gamma$, then
\[
\alpha(W_t\eta_t)=\beta(W_t\eta_t)=\delta(t).
\]
\end{lemma}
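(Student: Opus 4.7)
The plan is to verify the three assertions separately: submultiplicativity of $W_t\eta_t$, regularity of $W_t\eta_t$, and the identity $\alpha(W_t\eta_t)=\beta(W_t\eta_t)=\delta(t)$ under the additional condition \eqref{eq:spiralic}. Submultiplicativity is a general fact about the construction $\psi\mapsto W_t\psi$: for any continuous positive function $\psi:\Gamma\setminus\{t\}\to(0,\infty)$, the function defined via the ratios $F_{\psi,t}(R_1,R_2)$ is submultiplicative on $(0,\infty)$. This is \cite[Lemma~1.15]{BK97}, applied with $\psi=\eta_t$.

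For regularity one must show that $W_t\eta_t$ is bounded in some open neighborhood of $x=1$. The Carleson hypothesis enters here in an essential way, via the pointwise bound $\arg(\tau-t)=O(-\log|\tau-t|)$ recalled in the introduction together with a uniform control on the oscillation of $\arg(\tau-t)$ over the level circles $\{\tau\in\Gamma:|\tau-t|=R\}$ as $R$ varies in $(0,d_t]$. This is the genuinely nontrivial step; I would invoke the argument of \cite[Chap.~1]{BK97} directly, since the oscillation bound is not an immediate consequence of the pointwise estimate.

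Once regularity is available, Theorem~\ref{th:submult} reduces the computation of the indices to determining the limits of $\log(W_t\eta_t)(x)/\log x$ as $x\to 0$ and $x\to\infty$. Under \eqref{eq:spiralic}, continuity of $\arg(\tau-t)$ on $\Gamma\setminus\{t\}$ upgrades the pointwise asymptotics to a uniform expansion
\[
\arg(\tau-t)=-\delta(t)\log|\tau-t|+O(1)\quad\text{for all }\tau\in\Gamma\setminus\{t\},
\]
in which the $O(1)$ term is bounded uniformly in $\tau$ (away from $t$ both the argument and the logarithm are bounded, while near $t$ the bound is given by \eqref{eq:spiralic}). Substituting this into the definitions of $F_{\eta_t,t}(R_1,R_2)$ and $(W_t\eta_t)(x)$ makes the maxima and minima over level circles collapse to the same leading behavior, yielding $\log(W_t\eta_t)(x)=\delta(t)\log x+O(1)$ uniformly on $(0,\infty)$. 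Dividing by $\log x$ and passing to the limits $x\to 0^+$ and $x\to\infty$ gives $\alpha(W_t\eta_t)=\beta(W_t\eta_t)=\delta(t)$. The main obstacle is the regularity step: the pointwise bound on $\arg(\tau-t)$ does not by itself control the oscillation across level circles, and that uniformity is precisely what keeps the supremum defining $(W_t\eta_t)(x)$ finite near $x=1$.
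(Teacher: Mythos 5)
Your proposal is correct and in substance coincides with the paper's treatment: the paper gives no proof of this lemma, citing \cite[Theorem~1.18]{BK97}, and your argument likewise defers the one genuinely nontrivial ingredient --- regularity of $W_t\eta_t$ for a Carleson curve, i.e.\ the uniform control of the oscillation of $\arg(\tau-t)$ over the level sets $\{|\tau-t|=R\}$, which does not follow from the pointwise bound alone --- to the same source. What you add, the elementary verification that \eqref{eq:spiralic} gives $\eta_t(\tau)=|\tau-t|^{\delta(t)}e^{O(1)}$ uniformly on $\Gamma\setminus\{t\}$ (by compactness away from $t$), hence $(W_t\eta_t)(x)=x^{\delta(t)}e^{O(1)}$ and $\alpha(W_t\eta_t)=\beta(W_t\eta_t)=\delta(t)$ via Theorem~\ref{th:submult}, is a correct rendering of the computation that the cited result encapsulates.
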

%%%%%%%%%%%%%%%%%%%%%%%%%%%%%%%%%%%%%%%%%%%%%%%%%%%%%%%%%%%%%%%%%%%%%%%%%%%
The numbers $\alpha(W_t\eta_t)$ and $\beta(W_t\eta_t)$ are called the
\textit{lower} and \textit{upper spirality indices} of $\Gamma$ at $t$,
respectively. If $\Gamma$ is a piecewise smooth curve, then
$\alpha(W_t\eta_t)=\beta(W_t\eta_t)=0$
for all $t\in\Gamma$ (non-spiral curves). Carleson curves satisfying
\eqref{eq:spiralic} behave like perturbed logarithmic spirals. Examples
of Carleson curves which have arbitrary prescribed and distinct spirality
indices are given in \cite[Section~1.6]{BK97}.
%%%%%%%%%%%%%%%%%%%%%%%%%%%%%%%%%%%%%%%%%%%%%%%%%%%%%%%%%%%%%%%%%%%%%%%%%%%
\subsection{Indices of powerlikeness of continuous nonvanishing weights}
Let $\psi$ be a weight on $\Gamma$ such that $\log\psi\in L^1(\Gamma(t,R))$ for
every $R\in(0,d_t]$. Put
\[
H_{\psi,t}(R_1,R_2):=
\frac{\displaystyle\exp\left(\frac{1}{|\Gamma(t,R_1)|}
\int_{\Gamma(t,R_1)}\log\psi(\tau)|d\tau|\right)}
{\displaystyle\exp\left(\frac{1}{|\Gamma(t,R_2)|}
\int_{\Gamma(t,R_2)}\log\psi(\tau)|d\tau|\right)},
\quad R_1,R_2\in(0,d_t].
\]
Consider the function
\[
(V_t^0\psi)(x) := \limsup_{R\to 0} H_{\psi,t}(xR,R),
\quad x\in(0,\infty).
\]
%%%%%%%%%%%%%%%%%%%%%%%%%%%%%%%%%%%%%%%%%%%%%%%%%%%%%%%%%%%%%%%%%%%%%%%%%%%
\begin{lemma}
\label{le:BK97-Lemma3.16}
Let $\Gamma$ be a Carleson Jordan curve. Suppose $\psi:\Gamma\to[0,\infty]$
is a weight which is continuous and nonvanishing on $\Gamma\setminus\{t\}$.
If $W_t\psi$ is regular, then $V_t^0\psi$ is regular and submultiplicative
and
\[
\alpha(W_t\psi)=\alpha(V_t^0\psi),
\quad
\beta(W_t\psi)=\beta(V_t^0\psi).
\]
\end{lemma}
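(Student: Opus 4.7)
My plan closely follows the argument of \cite[Lemma~3.16]{BK97} and proceeds in three steps: establish submultiplicativity of $V_t^0\psi$, then its regularity, then equality of its indices with those of $W_t\psi$.

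For the first step, I would start from the identity
\[
H_{\psi,t}(xyR,R)=H_{\psi,t}(xyR,yR)\,H_{\psi,t}(yR,R)
\quad (R\in(0,d_t],\ x,y>0),
\]
which is immediate from the definition of $H_{\psi,t}$. Substituting $S=yR$ rewrites the first factor as $H_{\psi,t}(xS,S)$, and since $S=yR\to 0$ whenever $R\to 0$, passing to $\limsup_{R\to 0}$ and using the inequality $\limsup(fg)\le(\limsup f)(\limsup g)$ for nonnegative functions yields $V_t^0\psi(xy)\le V_t^0\psi(x)\,V_t^0\psi(y)$.

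For regularity, the key input is that $\psi$ is continuous and strictly positive on $\Gamma\setminus\{t\}$, so the geometric mean of $\psi$ on each arc $\Gamma(t,R)$ lies between $\min_{\tau\in\Gamma(t,R)}\psi(\tau)$ and $\max_{\tau\in\Gamma(t,R)}\psi(\tau)$. This dominates $H_{\psi,t}(R_1,R_2)$ above by the ratio of the maximum of $\psi$ on $\Gamma(t,R_1)$ to its minimum on $\Gamma(t,R_2)$. These arc extrema can be controlled by the circle extrema appearing in $F_{\psi,t}(s,s')$ for $s\le R_1$ and $s'\le R_2$, and the submultiplicativity of $W_t\psi$, regular by hypothesis, then bounds the resulting estimates uniformly as $R_1/R_2\to 1$. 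Passing to $\limsup_{R\to 0}$ shows $V_t^0\psi$ is bounded on a neighborhood of $1$, hence regular; combined with the first step, $V_t^0\psi$ is then finite everywhere.

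For equality of indices, the same comparison, together with its counterpart obtained from the lower half of the geometric-mean sandwich, produces a two-sided logarithmic relation between the regular submultiplicative functions $V_t^0\psi$ and $W_t\psi$ as $x\to 0^+$ and as $x\to\infty$. Theorem~\ref{th:submult} applied to both then forces $\alpha(V_t^0\psi)=\alpha(W_t\psi)$ and $\beta(V_t^0\psi)=\beta(W_t\psi)$. In my view the main obstacle is precisely this last two-sided comparison: a priori $V_t^0\psi(x)$ is only a $\limsup$ over arbitrarily small radii, whereas $W_t\psi(x)$ is a supremum over all $R\in(0,d_t]$ and could dominate $V_t^0\psi$ strictly. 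The crux is to argue that the growth of $W_t\psi$ at $0$ and at $\infty$ is driven entirely by arbitrarily small $R$, so that the sup and the limsup are logarithmically equivalent and their indices agree; this reduction is the technical heart of the argument in \cite{BK97} and would absorb most of the effort.
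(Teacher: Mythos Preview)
The paper does not give a proof of this lemma at all; it simply records that the statement follows from \cite[Theorem~3.3(c) and Lemma~3.16]{BK97}. Your sketch is therefore not competing with an argument in the paper but is an attempt to unpack the cited reference, and it does so in the correct spirit: the submultiplicativity of $V_t^0\psi$ is exactly \cite[Theorem~3.3(c)]{BK97}, and the index equality is \cite[Lemma~3.16]{BK97}. Your diagnosis that the two-sided comparison between $W_t\psi$ and $V_t^0\psi$ is the technical heart is accurate. One refinement worth noting: in \cite{BK97} the comparison is not carried out directly between $W_t\psi$ and the $\limsup$-function $V_t^0\psi$, but rather through an intermediate supremum-type function $V_t\psi$ (defined analogously to $W_t\psi$ but with $H_{\psi,t}$ in place of $F_{\psi,t}$); one first shows $\alpha(W_t\psi)=\alpha(V_t\psi)$ and $\beta(W_t\psi)=\beta(V_t\psi)$ by a direct sandwich, and then matches $V_t\psi$ with $V_t^0\psi$ via the general principle recorded here as Lemma~\ref{le:Phi-Phi-0} (cf.\ \cite[Lemma~1.16]{BK97}). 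This intermediate step is what resolves the ``sup versus limsup'' obstacle you flag.
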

%%%%%%%%%%%%%%%%%%%%%%%%%%%%%%%%%%%%%%%%%%%%%%%%%%%%%%%%%%%%%%%%%%%%%%%%%%%%
This statement follows from \cite[Theorem~3.3(c) and Lemma~3.16]{BK97}.

The numbers $\alpha(V_t^0\psi)$ and $\beta(V_t^0\psi)$ are called the
\textit{lower} and \textit{upper indices of powerlikeness} of $\psi$ at
$t\in\Gamma$, respectively. This terminology can be explained by
the simple fact that for the power weight $\psi(\tau):=|\tau-t|^\lambda$
its indices of powerlikeness coincide and are equal to $\lambda$.
Examples of Muckenhoupt weights with distinct indices of powerlikeness
are given in \cite[Proposition 3.25 and Examples 3.26--3.27]{BK97}.
%%%%%%%%%%%%%%%%%%%%%%%%%%%%%%%%%%%%%%%%%%%%%%%%%%%%%%%%%%%%%%%%%%%%%%%%%%%
\subsection{Indices of powerlikeness of radial oscillating weights}
%%%%%%%%%%%%%%%%%%%%%%%%%%%%%%%%%%%%%%%%%%%%%%%%%%%%%%%%%%%%%%%%%%%%%%%%%%%%
\begin{lemma}\label{le:indices-Phi-Vt}
Let $\Gamma$ be a Carleson Jordan curve and $\varrho\in\W$. Suppose
\[
\psi_t(\tau):=\varrho(|\tau-t|)\mbox{ for }\tau\in\Gamma\setminus\{t\}.
\]
Then the functions $W_t\psi_t$ and $V_t^0\psi_t$ are regular and
submultiplicative and
\[
m(\varrho)=\alpha(W_t\psi_t)=\alpha(V_t^0\psi_t),
\quad
M(\varrho)=\beta(W_t\psi_t)=\beta(V_t^0\psi_t).
\]
\end{lemma}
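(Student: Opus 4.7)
The plan is to exploit the \emph{radial} nature of $\psi_t$ to identify $W_t\psi_t$ directly with one of the functions $\Phi_{\varrho,B}$ introduced in Lemma~\ref{le:Phi-submult}, after which the statement about $W_t\psi_t$ follows from Lemmas~\ref{le:Phi-Phi-0} and~\ref{le:Phi-regular}, and the statement about $V_t^0\psi_t$ follows from Lemma~\ref{le:BK97-Lemma3.16}.

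First I would observe that $\psi_t(\tau)=\varrho(|\tau-t|)$ is constant on every level set $\{\tau\in\Gamma:|\tau-t|=R\}$, hence
\[
F_{\psi_t,t}(R_1,R_2)=\frac{\varrho(R_1)}{\varrho(R_2)}, \quad R_1,R_2\in(0,d_t].
\]
(Here one should briefly note that these level sets are nonempty for all $R\in(0,d_t]$: since $\Gamma$ is a connected compact set containing a point $\tau_0$ with $|\tau_0-t|=d_t$ and the point $t$ itself, the continuous function $\tau\mapsto|\tau-t|$ attains every intermediate value.) Substituting this into the definition of $W_t$ and comparing with \eqref{eq:Phi-submult} for $B=d_t\in(0,|\Gamma|]$ yields the identification
\[
(W_t\psi_t)(x)=\Phi_{\varrho,d_t}(x) \quad \text{for all } x\in(0,\infty).
\]

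Next I would invoke Lemma~\ref{le:Phi-regular}, which, since $\varrho\in\W$, gives that $\Phi_{\varrho,d_t}$ is regular; equivalently $W_t\psi_t$ is regular and submultiplicative. Lemma~\ref{le:Phi-Phi-0} (applied with $B=d_t$) then yields $\Phi_\varrho^0$ is regular and submultiplicative and
\[
\alpha(W_t\psi_t)=\alpha(\Phi_{\varrho,d_t})=\alpha(\Phi_\varrho^0)=m(\varrho),
\quad
\beta(W_t\psi_t)=\beta(\Phi_{\varrho,d_t})=\beta(\Phi_\varrho^0)=M(\varrho),
\]
which is half of the conclusion.

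The conclusion for $V_t^0\psi_t$ would then follow immediately from Lemma~\ref{le:BK97-Lemma3.16}: the function $\psi_t$ is continuous on $\Gamma\setminus\{t\}$ (as the composition of the continuous function $\varrho$ with $\tau\mapsto|\tau-t|$) and nonvanishing there (because $\varrho\in\W$ implies $\varrho(x)>0$ for $x\in(0,|\Gamma|]$), and $W_t\psi_t$ has just been shown to be regular. Hence $V_t^0\psi_t$ is regular and submultiplicative with $\alpha(V_t^0\psi_t)=\alpha(W_t\psi_t)=m(\varrho)$ and $\beta(V_t^0\psi_t)=\beta(W_t\psi_t)=M(\varrho)$. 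There is no real obstacle here: the entire content of the lemma is the radial identification $W_t\psi_t=\Phi_{\varrho,d_t}$, after which everything is a bookkeeping application of the previously quoted results; the only point requiring a word of caution is the nonemptiness of the spheres $\{|\tau-t|=R\}$ inside $\Gamma$, handled by the intermediate value argument above.
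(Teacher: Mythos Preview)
Your proposal is correct and follows essentially the same route as the paper's own proof: both reduce the lemma to the identification $(W_t\psi_t)(x)=\Phi_{\varrho,d_t}(x)$ via the radial computation $F_{\psi_t,t}(R_1,R_2)=\varrho(R_1)/\varrho(R_2)$, then invoke Lemmas~\ref{le:Phi-submult}--\ref{le:Phi-regular} for the $W_t$ part and Lemma~\ref{le:BK97-Lemma3.16} for the $V_t^0$ part. Your explicit intermediate-value remark on the nonemptiness of the level sets $\{|\tau-t|=R\}$ and your verification that $\psi_t$ is continuous and nonvanishing on $\Gamma\setminus\{t\}$ are useful justifications that the paper leaves implicit.
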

%%%%%%%%%%%%%%%%%%%%%%%%%%%%%%%%%%%%%%%%%%%%%%%%%%%%%%%%%%%%%%%%%%%%%%%%%%%%
\begin{proof}
By Lemmas~\ref{le:Phi-submult}--\ref{le:Phi-regular}, the functions
$\Phi_{\varrho,d_t}$ and $\Phi_\varrho^0$ are regular and submultiplicative
and
%%%
\begin{equation}\label{eq:indices-Phi-Vt-1}
\alpha(\Phi_{\varrho,d_t})=\alpha(\Phi_\varrho^0)=:m(\varrho),
\quad
\beta(\Phi_{\varrho,d_t})=\beta(\Phi_\varrho^0)=:M(\varrho).
\end{equation}
%%%
If $x\in(0,1]$ and $0<R\le d_t$, then
%%%
\begin{equation}\label{eq:indices-Phi-Vt-2}
F_{\psi_t,y}(xR,R)
=
\frac{\displaystyle\max_{\tau\in\Gamma,|\tau-t|=xR}\varrho(|\tau-t|)}
{\displaystyle\min_{\tau\in\Gamma,|\tau-t|=R}\varrho(|\tau-t|)}
=
\frac{\varrho(xR)}{\varrho(R)};
\end{equation}
%%%
if $x\in[1,\infty)$ and $0<R\le d_t$, then
%%%
\begin{equation}\label{eq:indices-Phi-Vt-3}
F_{\psi_t,t}(R,x^{-1}R)
=
\frac{\displaystyle\max_{\tau\in\Gamma,|\tau-t|=R}\varrho(|\tau-t|)}
{\displaystyle\min_{\tau\in\Gamma,|\tau-t|=x^{-1}R}\varrho(|\tau-t|)}
=
\frac{\varrho(R)}{\varrho(x^{-1}R)}.
\end{equation}
%%%
From \eqref{eq:indices-Phi-Vt-2} and \eqref{eq:indices-Phi-Vt-3} it follows
that
\[
(W_t\psi_t)(x)=\Phi_{\varrho,d_t}(x),
\quad
x\in(0,\infty).
\]
Hence $W_t\psi_t$ is regular because $\Phi_{\varrho,d_t}$ is so and
%%%
\begin{equation}\label{eq:indices-Phi-Vt-4}
\alpha(W_t\psi_t)=\alpha(\Phi_{\varrho,d_t}),
\quad
\beta(W_t\psi_t)=\beta(\Phi_{\varrho,d_t}).
\end{equation}
%%%
Combining \eqref{eq:indices-Phi-Vt-1}, \eqref{eq:indices-Phi-Vt-4} and
Lemma~\ref{le:BK97-Lemma3.16}, we arrive at the desired statement.
\end{proof}
%%%%%%%%%%%%%%%%%%%%%%%%%%%%%%%%%%%%%%%%%%%%%%%%%%%%%%%%%%%%%%%%%%%%%%%%%%%%
\begin{theorem}\label{th:radial-MO}
Suppose $\Gamma$ is a Carleson Jordan curve. If $w_1,\dots,w_n\in\W$
and $w(\tau)=\prod_{k=1}^n w_k(|\tau-t_k|)$, then for every $t\in\Gamma$
the function $V_t^0w$ is regular and submultiplicative and
\[
\begin{array}{llll}
\alpha(V_{t_k}^0w)=m(w_k), & \beta(V_{t_k}^0w)=M(w_k) &\mbox{for}  &k\in\{1,\dots,n\},
\\[2mm]
\alpha(V_t^0w)=0, & \beta(V_t^0w)=0 &  \mbox{for}   &t\in\Gamma\setminus\{t_1,\dots,t_n\}.
\end{array}
\]
\end{theorem}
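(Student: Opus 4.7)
My plan is to exploit the product structure $w=\prod_k \psi_k$, where $\psi_k(\tau):=w_k(|\tau-t_k|)$, in order to reduce the theorem to the single-weight computation already handled by Lemma~\ref{le:indices-Phi-Vt}. Since $\log w=\sum_{k=1}^n \log\psi_k$ wherever all factors are defined, the linearity of integration in the definition of $H_{\cdot,t}$ immediately gives the multiplicative decomposition
\[
H_{w,t}(R_1,R_2)=\prod_{k=1}^n H_{\psi_k,t}(R_1,R_2),\quad R_1,R_2\in(0,d_t].
\]

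The first key step is the observation that for any $t\in\Gamma$ and any index $k$ with $t\neq t_k$, the weight $\psi_k$ is continuous and strictly positive in a neighborhood of $t$. Hence $\log\psi_k$ is continuous at $t$, and both averages appearing in $H_{\psi_k,t}(xR,R)$ converge to $\log\psi_k(t)$ as $R\to 0$; consequently $H_{\psi_k,t}(xR,R)\to 1$ for every fixed $x\in(0,\infty)$. The second step uses that $t_1,\dots,t_n$ are pairwise distinct, so at most one index $j$ satisfies $t=t_j$; pulling the convergent positive factors out of the limsup yields
\[
(V_t^0 w)(x)=\begin{cases}(V_{t_j}^0\psi_j)(x)&\text{if }t=t_j\text{ for some }j,\\ 1&\text{if }t\in\Gamma\setminus\{t_1,\dots,t_n\}.\end{cases}
\]
In the second alternative, $V_t^0 w\equiv 1$ is trivially regular and submultiplicative with both indices equal to $0$. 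In the first alternative, Lemma~\ref{le:indices-Phi-Vt} applied with $\varrho=w_j$ shows that $V_{t_j}^0\psi_j$ is regular and submultiplicative with lower index $m(w_j)$ and upper index $M(w_j)$, finishing the identification.

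The one point that requires care is the passage to the $\limsup$: since $\limsup$ does not commute with products in general, I must justify that the factors $H_{\psi_k,t}(xR,R)$ with $t\neq t_k$ can be pulled out. This is legitimate precisely because each of those factors has a genuine (finite, positive) limit equal to $1$, so a standard real-analysis fact gives $\limsup_{R\to 0}\bigl[g(R)h(R)\bigr]=\limsup_{R\to 0}h(R)$ whenever $g(R)\to 1$ and $h\ge 0$. This is the only non-routine ingredient; everything else follows by invoking Lemma~\ref{le:indices-Phi-Vt} on the remaining at-most-one nontrivial factor.
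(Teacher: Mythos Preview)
Your argument is correct and follows essentially the same route as the paper: both decompose $H_{w,t}$ multiplicatively over the factors $\psi_k(\tau)=w_k(|\tau-t_k|)$, observe that the factors with $t_k\ne t$ are harmless near $t$, and then invoke Lemma~\ref{le:indices-Phi-Vt} on the single remaining factor. The only difference is that the paper bounds the harmless factors between positive constants and uses a sandwich inequality to conclude equality of indices, whereas you observe more sharply that these factors actually tend to $1$, yielding the exact pointwise identity $(V_{t_j}^0 w)(x)=(V_{t_j}^0\psi_j)(x)$; this is a slight streamlining but not a different approach.
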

%%%%%%%%%%%%%%%%%%%%%%%%%%%%%%%%%%%%%%%%%%%%%%%%%%%%%%%%%%%%%%%%%%%%%%%%%%%%
\begin{proof}
If $t\notin\{t_1,\dots,t_n\}$, then there exists $D_t>0$ such that the portion
$\Gamma(t,D_t)$ does not contain any point $t_1,\dots,t_n$. Since the weight
$w$ may vanish or go to infinity only at $t_1,\dots,t_n$ and $w_k(|\tau-t_k|)$
are continuous on $\Gamma\setminus\{t_k\}$, we can conclude that there exist
constants $c_t, C_t\in(0,\infty)$ such that $c_t\le w(\tau)\le C_t$ for all
$\tau\in\Gamma(t,D_t)$. Assume that $x\in(0,\infty)$ and $xR,R\in(0,D_t)$.
Then
\[
e^{c_t-C_t}\le H_{w,t}(xR,R)\le e^{C_t-c_t}.
\]
Hence
\[
e^{c_t-C_t}\le (V_t^0w)(x)\le e^{C_t-c_t},
\quad
x\in(0,\infty),
\]
which implies that $V_t^0w$ is regular. By \cite[Theorem~3.3(c)]{BK97},
the function $V_t^0w$ is submultiplicative. It is easy to see that
$\alpha(V_t^0w)=\beta(V_t^0w)=0$.

If $k\in\{1,\dots,n\}$, then there exists $D_k>0$ such that the portion
$\Gamma(t_k,D_k)$ does not contain any point of $\{t_1,\dots,t_n\}\setminus\{t_k\}$.
As before, there exists positive constants $c_k,C_k$ such that
\[
c_k\le \frac{w(\tau)}{w_k(|\tau-t_k|)}\le C_k
\quad\mbox{for all}\quad \tau\in\Gamma(t_k,D_k).
\]
If $x\in(0,\infty)$ and $xR,R\in(0,D_k)$, then taking into account that
\[
H_{w,t_k}(xR,R)=
H_{w_k(|\tau-t_k|),t_k}(xR,R)\cdot
H_{w/w_k(|\tau-t_k|),t_k}(xR,R),
\]
we get
%%%
\[
e^{c_k-C_k}H_{w_k(|\tau-t_k|),t_k}(xR,R)
\le
H_{w,t_k}(xR,R)
\le
e^{C_k-c_k}H_{w_k(|\tau-t_k|),t_k}(xR,R).
\]
Therefore, for all $x\in(0,\infty)$,
%%%
\begin{equation}\label{eq:radial-MO-1}
e^{c_k-C_k}\big(V_{t_k}^0w_k(|\tau-t_k|)\big)(x)
\le
(V_{t_k}^0w)(x)
\le
e^{C_k-c_k}\big(V_{t_k}^0w_k(|\tau-t_k|)\big)(x).
\end{equation}
%%%
By Lemma~\ref{le:indices-Phi-Vt}, the function $V_{t_k}^0w_k(|\tau-t_k|)$
is regular and
%%%
\begin{equation}\label{eq:radial-MO-2}
m(w_k)=\alpha\big(V_{t_k}^0w_k(|\tau-t_k|)\big),
\quad
M(w_k)=\beta\big(V_{t_k}^0w_k(|\tau-t_k|)\big).
\end{equation}
%%%
Since the function $V_{t_k}^0w_k(|\tau-t_k|)$ is regular, from
\eqref{eq:radial-MO-1} it follows that $V_{t_k}^0w$ is also regular
and
%%%
\begin{equation}\label{eq:radial-MO-3}
\alpha\big(V_{t_k}^0w_k(|\tau-t_k|)\big)=\alpha(V_{t_k}^0w),
\quad
\beta\big(V_{t_k}^0w_k(|\tau-t_k|)\big)=\beta(V_{t_k}^0w).
\end{equation}
%%%
Combining \eqref{eq:radial-MO-2} and \eqref{eq:radial-MO-3},
we finally arrive at the equalities $\alpha(V_{t_k}^0w)=m(w_k)$ and
$\beta(V_{t_k}^0w)=M(w_k)$.
\end{proof}
%%%%%%%%%%%%%%%%%%%%%%%%%%%%%%%%%%%%%%%%%%%%%%%%%%%%%%%%%%%%%%%%%%%%%%%%%%%%
\subsection{Indicator functions}
Let $\Gamma$ be a rectifiable Jordan curve and $p:\Gamma\to(1,\infty)$ be
a continuous function. We say that a weight $w:\Gamma\to[0,\infty]$ belongs
to $A_{p(\cdot)}(\Gamma)$ if
\[
\sup_{t\in\Gamma}\sup_{R>0}\frac{1}{R}
\|w\chi_{\Gamma(t,R)}\|_{p(\cdot)}
\|w^{-1}\chi_{\Gamma(t,R)}\|_{q(\cdot)}<\infty.
\]
If $p=const\in(1,\infty)$, then
this class coincides with the well known Muckenhoupt class. From the H\"older
inequality for $L^{p(\cdot)}(\Gamma)$ (see e.g. \cite[Theorem~2.1]{KR91}) it
follows that if $w\in A_{p(\cdot)}(\Gamma)$, then $\Gamma$ is a Carleson curve.
%%%%%%%%%%%%%%%%%%%%%%%%%%%%%%%%%%%%%%%%%%%%%%%%%%%%%%%%%%%%%%%%%%%%%%%%%%%%
\begin{theorem}\label{th:Ap}
Let $\Gamma$ be a rectifiable Jordan curve and let $p:\Gamma\to(1,\infty)$
be a continuous function. If $w:\Gamma\to[0,\infty]$ is an arbitrary weight
such that the operator $S$ is bounded on $L^{p(\cdot)}(\Gamma,w)$, then
$w\in A_{p(\cdot)}(\Gamma)$.
\end{theorem}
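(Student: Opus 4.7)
The plan is to test the boundedness of $S$ against carefully chosen functions supported on portions $\Gamma(t_0,R)$, via the duality between $L^{p(\cdot)}(\Gamma,w)$ and $L^{q(\cdot)}(\Gamma,1/w)$. This is the standard strategy used for constant exponent $p$ (where it goes back to David and others) adapted to the variable exponent setting, essentially as in \cite[Section~8]{Karlovich03}.

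The first step is a purely geometric lemma about rectifiable Jordan arcs: for every $t_0\in\Gamma$ and $R>0$ one can select two sub-portions $J_1,J_2\subset\Gamma(t_0,R)$ with $|J_i|\ge c_0|\Gamma(t_0,R)|$ such that the sets are separated by a distance $\sim R$, and such that for every $\sigma\in J_1$ and $\tau\in J_2$ the argument $\arg(\sigma-\tau)$ lies in a fixed half-plane $\{\theta_0-\pi/2+\eps,\theta_0+\pi/2-\eps\}$. One obtains $J_1,J_2$ by parametrizing $\Gamma(t_0,R)$ by arc length and taking the two ``ends'' of the parametrization; the argument condition follows because a Jordan arc parametrized by length cannot turn too quickly on average over a macroscopic subarc. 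This gives the pointwise lower bound
\[
\operatorname{Re}\bigl(e^{i\theta_0}(Sf)(\tau)\bigr)\ge \frac{c}{R}\int_{J_1}f(\sigma)\,|d\sigma|,
\quad \tau\in J_2,
\]
for every nonnegative $f$ supported in $J_1$.

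The second step combines this with the boundedness $\|Sf\|_{p(\cdot),w}\le\|S\|\,\|f\|_{p(\cdot),w}$. Multiplying the above inequality by $w\chi_{J_2}$, taking the $L^{p(\cdot)}$ norm, and bounding it by the norm of $Sf$, one obtains
\[
\frac{1}{R}\left(\int_{J_1}f(\sigma)\,|d\sigma|\right)\|w\chi_{J_2}\|_{p(\cdot)}
\le C\,\|fw\chi_{J_1}\|_{p(\cdot)}.
\]
Taking the supremum over $f\ge 0$ with $\|fw\chi_{J_1}\|_{p(\cdot)}\le 1$ and using the standard identification of the norm on $L^{q(\cdot)}(\Gamma,1/w)$ as the dual norm (which gives $\sup_f\int_{J_1}f\,|d\sigma|=\sup_f\int_{J_1}(fw)\cdot(1/w)\,|d\sigma|=\|w^{-1}\chi_{J_1}\|_{q(\cdot)}$ up to the equivalence constants between the Luxemburg and Orlicz norms), one arrives at
\[
\frac{1}{R}\,\|w^{-1}\chi_{J_1}\|_{q(\cdot)}\,\|w\chi_{J_2}\|_{p(\cdot)}\le C.
\]
Interchanging the roles of $J_1,J_2$ (possible since $(I-S)$ is also bounded, or equivalently by applying the same argument with the opposite orientation) yields the companion estimate with $J_1,J_2$ swapped. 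Since $J_1\cup J_2$ carries a definite fraction of $\Gamma(t_0,R)$, a standard covering/iteration argument upgrades these local estimates to a bound for $\|w\chi_{\Gamma(t_0,R)}\|_{p(\cdot)}\|w^{-1}\chi_{\Gamma(t_0,R)}\|_{q(\cdot)}/R$ that is uniform in $t_0$ and $R$, which is exactly $w\in A_{p(\cdot)}(\Gamma)$.

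The main obstacle is the geometric lemma in the first step: on a Carleson curve the existence of $J_1,J_2$ with the required separation and argument control is classical \cite[Chap.~4]{BK97}, but here we only know $\Gamma$ is rectifiable. One must argue that even without a prior Carleson hypothesis, rectifiability alone supplies a subarc decomposition with the required half-plane condition on $\arg(\sigma-\tau)$; a secondary subtlety is the reassembly in Step~4, where the covering has to be compatible with the variable-exponent norms (so that constants do not degenerate as $R\to 0$). The Dini--Lipschitz type continuity of $p$ from \eqref{eq:Dini-Lipschitz}, together with the continuity of the Luxemburg norm under the relevant truncations, ensures that these constants stay uniformly bounded, closing the argument.
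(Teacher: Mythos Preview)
Your overall strategy---test $S$ on functions supported in small portions $\Gamma(t_0,R)$, extract a pointwise lower bound for $|Sf|$ on a separated piece, and convert this into a norm inequality via duality---is indeed the approach of the references the paper cites (\cite[Theorem~4.3]{Karlovich96}, \cite[Theorem~3.2]{Karlovich98}, \cite[Theorem~4.8]{BK97}); the paper itself gives no proof beyond those citations, so at the level of method you are aligned with it.

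However, there are two genuine gaps in your execution. First, and most concretely, you close the argument by invoking the Dini--Lipschitz condition \eqref{eq:Dini-Lipschitz}, but Theorem~\ref{th:Ap} does \emph{not} assume it: the hypothesis is only that $p$ is continuous on the compact curve $\Gamma$. The duality between $L^{p(\cdot)}(\Gamma,w)$ and $L^{q(\cdot)}(\Gamma,1/w)$ and the equivalence of Luxemburg and Orlicz norms require only $1<\min p\le\max p<\infty$, which follows from continuity; you must make the covering step work under that sole assumption, as is done in the Orlicz and rearrangement-invariant analogues you are meant to be imitating.

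Second, your ``covering/iteration'' step is not the innocuous upgrade you suggest. From the two cross-estimates
\[
\|w^{-1}\chi_{J_1}\|_{q(\cdot)}\,\|w\chi_{J_2}\|_{p(\cdot)}\le CR,
\qquad
\|w^{-1}\chi_{J_2}\|_{q(\cdot)}\,\|w\chi_{J_1}\|_{p(\cdot)}\le CR,
\]
expanding $\|w\chi_{\Gamma(t_0,R)}\|_{p(\cdot)}\,\|w^{-1}\chi_{\Gamma(t_0,R)}\|_{q(\cdot)}$ by triangle inequality leaves the diagonal terms $\|w\chi_{J_i}\|_{p(\cdot)}\,\|w^{-1}\chi_{J_i}\|_{q(\cdot)}$ completely uncontrolled; iterating the decomposition on each $J_i$ only converges if you already know something like a Carleson/doubling bound, which is part of what you are trying to prove. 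In the cited proofs this circularity is avoided: one does not split $\Gamma(t_0,R)$ symmetrically, but rather keeps the \emph{entire} portion $\Gamma(t_0,R)$ as the support of $f$ and evaluates $Sf$ on a suitable arc at scale comparable to $R$ (using that the portion itself, being a Jordan subarc, has endpoints on $\partial D(t_0,R)$ and so automatically spans a chord of length $\sim R$), together with an argument that plays the two complementary projections $P$ and $Q$ against each other. That is the missing mechanism; the half-plane argument lemma you sketch is neither quite the right statement for an arbitrary rectifiable curve nor sufficient on its own.
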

%%%%%%%%%%%%%%%%%%%%%%%%%%%%%%%%%%%%%%%%%%%%%%%%%%%%%%%%%%%%%%%%%%%%%%%%%%%%
This statement can be proved by analogy with \cite[Theorem~4.3]{Karlovich96},
\cite[Theorem~3.2]{Karlovich98} (see also \cite[Theorem~4.8]{BK97}).
If $p=const\in(1,\infty)$, then $w\in A_p(\Gamma)$ is also sufficient
for the boundedness of $S$ on the weighted Lebesgue space $L^p(\Gamma,w)$
(see e.g. \cite[Theorem~4.15]{BK97}).
%%%%%%%%%%%%%%%%%%%%%%%%%%%%%%%%%%%%%%%%%%%%%%%%%%%%%%%%%%%%%%%%%%%%%%%%%%%%
\begin{lemma}\label{le:indicator-existence}
Let $\Gamma$ be a rectifiable Jordan curve, let $p:\Gamma\to(1,\infty)$
be a continuous function satisfying \eqref{eq:Dini-Lipschitz}, and let
$w:\Gamma\to[0,\infty]$ be an arbitrary weight. Suppose
$w\in A_{p(\cdot)}(\Gamma)$. Then
%%%
\begin{enumerate}
\item[{\rm (a)}]
for every $t\in\Gamma$ and $x\in\R$,
the function $V_t^0(\eta_t^xw)$ is regular and submultiplicative;

\item[{\rm (b)}]
for every $t\in\Gamma$,
\[
0\le 1/p(t)+\alpha(V_t^0w),
\quad
1/p(t)+\beta(V_t^0w)\le 1.
\]
\end{enumerate}
\end{lemma}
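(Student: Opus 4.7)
The plan is to treat parts (a) and (b) separately, reducing (a) to known results about $V_t^0$ of ``nice'' weights and (b) to the classical Muckenhoupt-type inequality via a Dini--Lipschitz reduction from variable to constant exponent.

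For the submultiplicativity in (a), observe that $w\in A_{p(\cdot)}(\Gamma)$ implies $\Gamma$ is Carleson (as noted just before Theorem~\ref{th:Ap}), so by \cite[Theorem~1.10]{BK97} we have $\arg(\tau-t)=O(-\log|\tau-t|)$ as $\tau\to t$. Testing the $A_{p(\cdot)}$ inequality against $\chi_{\Gamma(t,R)}$ and using the variable-exponent H\"older inequality from \cite{KR91}, one gets $w,w^{-1}\in L^{p(\cdot)}_{\mathrm{loc}}$; since $\log^+w\le w$ and $\log^-w\le w^{-1}$, this gives $\log w\in L^1(\Gamma(t,R))$ for small $R$. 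Consequently $\log(\eta_t^x w)=-x\arg(\tau-t)+\log w$ is locally integrable near $t$, and \cite[Theorem~3.3(c)]{BK97} gives the submultiplicativity of $V_t^0(\eta_t^x w)$.

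For regularity in (a), the linearity of averaging on logarithms yields
\[
H_{\eta_t^x w,t}(R_1,R_2)=H_{\eta_t,t}(R_1,R_2)^x\cdot H_{w,t}(R_1,R_2),
\]
so it suffices to show that $V_t^0\eta_t$, $V_t^0\eta_t^{-1}$, and $V_t^0 w$ are each bounded in a neighborhood of $1$ (the first two are needed depending on the sign of $x$). The first two follow by combining Lemmas~\ref{le:spirality-existence} and~\ref{le:BK97-Lemma3.16} applied to the continuous nonvanishing weights $\eta_t$ and $\eta_t^{-1}$. For $V_t^0 w$, the $A_{p(\cdot)}$ condition combined with Jensen's inequality applied to $\exp(\mathrm{avg}_{\Gamma(t,R)}\log w)$ and $\exp(\mathrm{avg}_{\Gamma(t,yR)}\log w)$ bounds $H_{w,t}(yR,R)$ uniformly for $y$ near $1$.

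For (b), the Dini--Lipschitz condition~\eqref{eq:Dini-Lipschitz} yields, on each sufficiently small portion $\Gamma(t,R)$, a uniform equivalence between $L^{p(\cdot)}$- and $L^{p(t)}$-norms of functions supported on $\Gamma(t,R)$ (with a controlled rescaling factor depending on $R$). This is a standard technical device in variable-exponent analysis (cf.\ \cite{Karlovich03,KPS05}) and converts the $A_{p(\cdot)}$ condition into a local $A_{p(t)}$-type inequality of the form $(\mathrm{avg}_{\Gamma(t,R)}w^{p(t)})^{1/p(t)}(\mathrm{avg}_{\Gamma(t,R)}w^{-q(t)})^{1/q(t)}\le C$ up to uniform constants. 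Applying Jensen's inequality to $\exp(\pm\mathrm{avg}_{\Gamma(t,R)}\log w)$ and $\exp(\pm\mathrm{avg}_{\Gamma(t,yR)}\log w)$, taking the ratio on nested portions $\Gamma(t,R)\subset\Gamma(t,yR)$, and letting first $R\to 0$ and then $y\to\infty$ (resp.\ $y\to 0$), one obtains $1/p(t)+\beta(V_t^0 w)\le 1$ (resp.\ $0\le 1/p(t)+\alpha(V_t^0 w)$). The main technical obstacle is the justification of the $p(\cdot)$--to--$p(t)$ equivalence with uniform constants; once this is in place, the remainder is a variable-exponent adaptation of the classical Muckenhoupt-type reasoning of \cite[Chapter~3]{BK97}.
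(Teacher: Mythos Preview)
The paper does not give a proof of this lemma: it simply records that part~(a) follows from \cite[Lemmas~4.5, 5.8, and 5.13]{Karlovich03} and part~(b) from \cite[Lemma~4.9 and Theorem~5.9]{Karlovich03}. Your sketch is a reasonable reconstruction of what lies behind those citations, and the main ingredients you identify --- local integrability of $\log w$ from the $A_{p(\cdot)}$ condition, submultiplicativity of $V_t^0$ via \cite[Theorem~3.3(c)]{BK97}, the factorization $H_{\eta_t^x w,t}=H_{\eta_t,t}^{\,x}\,H_{w,t}$ combined with Lemmas~\ref{le:spirality-existence} and~\ref{le:BK97-Lemma3.16} for the $\eta_t^{\pm 1}$ piece, Jensen's inequality for the regularity of $V_t^0 w$, and in~(b) the Dini--Lipschitz reduction of $A_{p(\cdot)}$ to a local constant-exponent $A_{p(t)}$ inequality --- are correct and match the route taken in \cite{Karlovich03}. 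Your outline is candid about its one soft spot: the uniform $L^{p(\cdot)}\leftrightarrow L^{p(t)}$ norm comparison on small portions $\Gamma(t,R)$ is asserted rather than proved. That is indeed the nontrivial technical step, and it is precisely what the cited lemmas from \cite{Karlovich03} provide; once it is in hand, the Jensen-and-nested-portions computation you describe is the standard argument from \cite[Chapter~3]{BK97} and yields the stated index bounds.
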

%%%%%%%%%%%%%%%%%%%%%%%%%%%%%%%%%%%%%%%%%%%%%%%%%%%%%%%%%%%%%%%%%%%%%%%%%%%%
Part (a) follows from \cite[Lemmas 4.5, 5.8, and 5.13]{Karlovich03}.
Part (b) is the consequence of \cite[Lemma~4.9 and Theorem~5.9]{Karlovich03}.

Under the conditions of Lemma~\ref{le:indicator-existence}, the \textit{indicator
functions} of the pair $(\Gamma,w)$ at every $t\in\Gamma$ are well defined by
\[
\alpha_t(x):=\alpha\big(V_t^0(\eta_t^xw)\big),
\quad
\beta_t(x):=\beta\big(V_t^0(\eta_t^xw)\big)
\]
because of Theorem~\ref{th:submult}.
%%%%%%%%%%%%%%%%%%%%%%%%%%%%%%%%%%%%%%%%%%%%%%%%%%%%%%%%%%%%%%%%%%%%%%%%%%%%
\begin{lemma}\label{le:indicator-calculation}
Let $\Gamma$ be a Carleson Jordan curve satisfying the logarithmic whirl
condition \eqref{eq:spiralic} at every $t\in\Gamma$, let $p:\Gamma\to(1,\infty)$
be a continuous function satisfying \eqref{eq:Dini-Lipschitz}, and let
$w:\Gamma\to[0,\infty]$ be an arbitrary weight. If $w\in A_{p(\cdot)}(\Gamma)$,
then for every $t\in\Gamma$ and $x\in\R$,
\[
\alpha_t(x)=\alpha(V_t^0w)+\delta(t)x,
\quad
\beta_t(x)=\beta(V_t^0w)+\delta(t)x.
\]
\end{lemma}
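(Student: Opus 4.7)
My plan is to exploit the logarithmic whirl condition \eqref{eq:spiralic} to write
\[
\log\eta_t(\tau)=\delta(t)\log|\tau-t|+k(\tau),
\]
where $k$ is bounded on some punctured neighborhood $\Gamma(t,R_0)\setminus\{t\}$. This presents $\eta_t^xw$ as the product of the power weight $|\tau-t|^{x\delta(t)}$, the factor $e^{xk(\tau)}$, and $w$. Since $\log(\psi_1\psi_2)=\log\psi_1+\log\psi_2$, the quantity $H_{\cdot,t}(R_1,R_2)$ is multiplicative in its weight argument, so
\[
H_{\eta_t^xw,t}(R_1,R_2)=H_{|\tau-t|^{x\delta(t)},t}(R_1,R_2)\cdot H_{e^{xk},t}(R_1,R_2)\cdot H_{w,t}(R_1,R_2).
\]

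I would then estimate the first two factors at $(R_1,R_2)=(yR,R)$ as $R\to 0$. Boundedness of $k$ near $t$ gives positive constants $c_1,C_1$, depending only on $|x|$ and $\sup_{\Gamma(t,R_0)}|k|$ but not on $y$ or $R$, such that $c_1\le H_{e^{xk},t}(yR,R)\le C_1$ for all sufficiently small $R>0$. For the power factor I would set $A(R):=\frac{1}{|\Gamma(t,R)|}\int_{\Gamma(t,R)}\log|\tau-t|\,|d\tau|$ and verify, via the Ahlfors--David regularity and a standard layer-cake argument, the Carleson estimate $|A(R)-\log R|\le C$ for all sufficiently small $R$. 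This forces $A(yR)-A(R)=\log y+O(1)$ with the $O(1)$ bounded by $2C$ uniformly in $y$ and small $R$, so that
\[
H_{|\tau-t|^{x\delta(t)},t}(yR,R)=\exp\bigl(x\delta(t)[A(yR)-A(R)]\bigr)=y^{x\delta(t)}\cdot e^{O(1)}.
\]

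Combining these estimates would produce constants $c,C>0$, independent of $y\in(0,\infty)$ and of small $R$, with
\[
c\,y^{x\delta(t)}H_{w,t}(yR,R)\le H_{\eta_t^xw,t}(yR,R)\le C\,y^{x\delta(t)}H_{w,t}(yR,R).
\]
Taking $\limsup_{R\to 0}$ of this double inequality yields
\[
c\,y^{x\delta(t)}(V_t^0w)(y)\le(V_t^0(\eta_t^xw))(y)\le C\,y^{x\delta(t)}(V_t^0w)(y),\quad y\in(0,\infty).
\]
By Lemma~\ref{le:indicator-existence}(a), both $V_t^0w$ and $V_t^0(\eta_t^xw)$ are regular and submultiplicative, so Theorem~\ref{th:submult} applies. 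Taking logarithms of this sandwich, dividing by $\log y$, and letting $y\to 0^+$ and $y\to\infty$, the bounded constants $c,C$ contribute only $O(1/|\log y|)$, and the limits deliver
\[
\alpha_t(x)=x\delta(t)+\alpha(V_t^0w),\quad \beta_t(x)=x\delta(t)+\beta(V_t^0w),
\]
as asserted.

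The main technical step is the Carleson-curve estimate $|A(R)-\log R|\le C$; it is where the Ahlfors--David regularity of $\Gamma$ genuinely enters (and is in the same circle of ideas that already underlies Lemma~\ref{le:BK97-Lemma3.16}). The remainder is essentially the log-whirl decomposition of $\eta_t^x$, the multiplicativity of $H_{\cdot,t}(R_1,R_2)$ in the weight, and formal manipulation of $\limsup$ and of indices of regular submultiplicative functions via Theorem~\ref{th:submult}.
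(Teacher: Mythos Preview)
Your argument is correct. The multiplicativity of $H_{\cdot,t}(R_1,R_2)$ in the weight, the estimate $|A(R)-\log R|\le C$ via the layer-cake bound $\int_{\Gamma(t,R)}\log(R/|\tau-t|)\,|d\tau|=\int_0^\infty|\Gamma(t,Re^{-s})|\,ds\le C_\Gamma R$ together with the lower Ahlfors bound $|\Gamma(t,R)|\ge cR$ for a Jordan Carleson curve, and the sandwich passage to $\limsup_{R\to0}$ and then to indices via Theorem~\ref{th:submult} are all sound. One implicit point you rely on is that $\log w\in L^1(\Gamma(t,R))$ for small $R$; this is standard for weights in $A_{p(\cdot)}(\Gamma)$ and is already tacitly assumed when $V_t^0w$ is formed.

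The paper does not argue directly: it simply invokes Lemma~\ref{le:spirality-existence} (which, under \eqref{eq:spiralic}, gives $\alpha(W_t\eta_t)=\beta(W_t\eta_t)=\delta(t)$) together with \cite[Lemma~5.8, Corollary~5.16(b)]{Karlovich03}, where the indicator functions $\alpha_t,\beta_t$ are computed in a more general framework and then specialized to curves with coinciding spirality indices. Your route is genuinely different: instead of passing through the general machinery of \cite{Karlovich03}, you exploit \eqref{eq:spiralic} at the outset to split $\eta_t^x$ into a pure power $|\tau-t|^{x\delta(t)}$ and a bounded perturbation, and then compute the effect of each factor on $V_t^0$ by hand. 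The gain is a short, self-contained proof tailored to the logarithmic-whirl setting; the price is that your argument does not extend to Carleson curves with $\alpha(W_t\eta_t)\ne\beta(W_t\eta_t)$, whereas the cited results in \cite{Karlovich03} do.
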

%%%%%%%%%%%%%%%%%%%%%%%%%%%%%%%%%%%%%%%%%%%%%%%%%%%%%%%%%%%%%%%%%%%%%%%%%%%%
This result follows from Lemma~\ref{le:spirality-existence} and
\cite[Lemma~5.8, Corollary~5.16(b)]{Karlovich03}.
%%%%%%%%%%%%%%%%%%%%%%%%%%%%%%%%%%%%%%%%%%%%%%%%%%%%%%%%%%%%%%%%%%%%%%%%%%%%
\section{Singular integral operators with $L^\infty$ coefficients}
\label{sect:3}
\subsection{General necessary condition for Fredholmness}
%%%%%%%%%%%%%%%%%%%%%%%%%%%%%%%%%%%%%%%%%%%%%%%%%%%%%%%%%%%%%%%%%%%%%%%%%%%%
In this section we will suppose that $\Gamma$ is a Carleson Jordan curve,
$p:\Gamma\to(1,\infty)$ is a continuous function, and $w:\Gamma\to[0,\infty]$
is an arbitrary weight (not necessarily of the form \eqref{eq:weight})
such that $S$ is bounded on $L^{p(\cdot)}(\Gamma,w)$. Under these assumptions,
\[
P:=(I+S)/2,
\quad
Q:=(I-S)/2
\]
are bounded projections on $L^{p(\cdot)}(\Gamma,w)$
(see \cite[Lemma~6.4]{Karlovich03}). The operators of the form $aP+Q$, where
$a \in L^\infty(\Gamma)$, are called \textit{singular integral operators}
(SIOs).
%%%%%%%%%%%%%%%%%%%%%%%%%%%%%%%%%%%%%%%%%%%%%%%%%%%%%%%%%%%%%%%%%%%%%%%%%%%%
\begin{theorem}\label{th:necessity}
Suppose $a\in L^\infty(\Gamma)$. If $aP+Q$ is Fredholm on $L^{p(\cdot)}(\Gamma,w)$,
then $a^{-1}\in L^\infty(\Gamma)$.
\end{theorem}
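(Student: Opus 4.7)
My plan is to argue by contrapositive: suppose $\operatorname{ess\,inf}_{\tau\in\Gamma}|a(\tau)|=0$, equivalently $a^{-1}\notin L^\infty(\Gamma)$, and derive that $aP+Q$ is not Fredholm. The starting point is the classical Plemelj--Sokhotski identity $S^2=I$, which holds on any Carleson curve and hence on $L^{p(\cdot)}(\Gamma,w)$ under our standing assumption that $S$ is bounded. This yields $P^2=P$, $Q^2=Q$ and $PQ=QP=0$, so $P$ and $Q$ are \emph{complementary} bounded projections and we obtain a topological direct sum $L^{p(\cdot)}(\Gamma,w)=\im P\oplus\im Q$.

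With respect to this decomposition, a short calculation using $Pf=f$, $Qf=0$ for $f\in\im P$ and the reverse for $g\in\im Q$ shows that $aP+Q$ has the block-triangular form
\[
\begin{pmatrix} T_a & 0\\ QaP|_{\im P} & I_{\im Q}\end{pmatrix},\qquad T_a:=PaP|_{\im P}.
\]
Since the $(2,2)$-block is the identity, $aP+Q$ is Fredholm on $L^{p(\cdot)}(\Gamma,w)$ if and only if the Toeplitz-type operator $T_a$ is Fredholm on $\im P$. My task therefore reduces to showing that $T_a$ cannot be Fredholm when $\operatorname{ess\,inf}|a|=0$.

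For this, I would select measurable sets $E_n\subset\Gamma$ with $|E_n|>0$ and $|a|\le 1/n$ a.e.\ on $E_n$, and then construct a sequence $f_n\in\im P$ with $\|f_n\|_{p(\cdot),w}=1$, $f_n\rightharpoonup 0$ weakly, and $\|af_n\|_{p(\cdot),w}\to 0$. Granted such a sequence, we have $\|T_a f_n\|_{p(\cdot),w}\le\|P\|\cdot\|af_n\|_{p(\cdot),w}\to 0$, while weak convergence forces $\operatorname{dist}(f_n,V)\to 1$ for every fixed finite-dimensional subspace $V\subset\im P$. This contradicts the semi-Fredholm lower bound $\|T_a f\|\ge c\,\operatorname{dist}(f,\ker T_a)$ that any Fredholm $T_a$ would enjoy, so $T_a$, and hence $aP+Q$, fails to be Fredholm.

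The main obstacle is producing the test functions inside $\im P$ rather than in the ambient space: a crude bump like $\chi_{E_n}$ lies in $L^{p(\cdot)}(\Gamma,w)$ but is not in $\im P$, and its image $P\chi_{E_n}$ need not remain concentrated on $E_n$ because the commutator $[a,P]$ is not compact for general $a\in L^\infty(\Gamma)$. In the classical constant-exponent setting this is handled via outer functions in a Hardy space; in the present generality I would invoke the sharper necessary condition for the Fredholmness of $aP+Q$ proved in \cite[Theorem~8.1]{Karlovich03}, whose proof supplies exactly such localized test sequences and accommodates both the variable exponent $p(\cdot)$ and the weight $w$. Theorem~\ref{th:necessity} is then an immediate corollary of that sharper statement.
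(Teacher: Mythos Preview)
The paper does not give an independent proof of this theorem; it simply cites \cite[Theorem~6.11]{Karlovich03}. So the only thing to check is whether your argument stands on its own or correctly reduces to an existing result.

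Your block-triangular reduction of $aP+Q$ to the Toeplitz-type operator $T_a=PaP|_{\im P}$ is correct and standard, and the test-sequence strategy is the right shape. However, the proof collapses at the last step. You concede that you cannot construct the sequence $f_n\in\im P$ yourself, and then you claim that Theorem~\ref{th:necessity} is ``an immediate corollary'' of \cite[Theorem~8.1]{Karlovich03}. That is incorrect: Theorem~8.1 of \cite{Karlovich03} (quoted in this paper as Theorem~\ref{th:Nakano-necessity}) is stated only for $a\in PC(\Gamma)$, not for arbitrary $a\in L^\infty(\Gamma)$. A result for piecewise continuous symbols cannot imply the same conclusion for general bounded measurable symbols, so Theorem~\ref{th:necessity} is not a corollary of it. Nor can you simply borrow ``localized test sequences'' from its proof without checking that their construction does not rely on the existence of one-sided limits of $a$.

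The reference you need is \cite[Theorem~6.11]{Karlovich03}, which is exactly what the paper cites and which treats the $L^\infty$ case directly. If instead you want a self-contained argument along your lines, you must actually produce the functions $f_n\in\im P$ concentrated near the sets where $|a|$ is small; in this generality that requires the Banach-function-space machinery developed in \cite{Karlovich03}, not outer-function constructions. A minor point: weak convergence $f_n\rightharpoonup 0$ with $\|f_n\|=1$ gives only $\liminf_n\operatorname{dist}(f_n,V)>0$ for finite-dimensional $V$, not necessarily limit $1$; this is still enough for the contradiction, but the statement as written is too strong.
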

%%%%%%%%%%%%%%%%%%%%%%%%%%%%%%%%%%%%%%%%%%%%%%%%%%%%%%%%%%%%%%%%%%%%%%%%%%%%
This result follows from \cite[Theorem~6.11]{Karlovich03}.
%%%%%%%%%%%%%%%%%%%%%%%%%%%%%%%%%%%%%%%%%%%%%%%%%%%%%%%%%%%%%%%%%%%%%%%%%%%%
\subsection{The local principle of Simonenko type}
Two functions $a,b\in L^\infty(\Gamma)$ are said to be locally
equivalent at a point $t\in\Gamma$ if
\[
\inf\big\{\|(a-b)c\|_\infty\ :\ c\in C(\Gamma),\ c(t)=1\big\}=0.
\]
%%%%%%%%%%%%%%%%%%%%%%%%%%%%%%%%%%%%%%%%%%%%%%%%%%%%%%%%%%%%%%%%%%%%%%%%%%%%
\begin{theorem}\label{th:local_principle}
Suppose  $a\in L^\infty(\Gamma)$ and for each $t\in\Gamma$ there exists a function
$a_t\in L^\infty(\Gamma)$ which is locally equivalent to $a$ at $t$. If the
operators $a_tP+Q$ are Fredholm on $L^{p(\cdot)}(\Gamma,w)$ for all $t\in\Gamma$,
then $aP+Q$ is Fredholm on $L^{p(\cdot)}(\Gamma,w)$.
\end{theorem}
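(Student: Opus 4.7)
The proof follows the classical Simonenko localization scheme. Three ingredients underpin the argument: (i) the compactness of the commutators $[cI,S]$ on $L^{p(\cdot)}(\Gamma,w)$ for every $c\in C(\Gamma)$, which is available in the variable Lebesgue setting from \cite{Karlovich03} under our standing assumption that $S$ is bounded, and which implies that $[cI,P]$, $[cI,Q]$, and hence $[cI,aP+Q]=a[cI,P]+[cI,Q]$ are compact for all $a\in L^\infty(\Gamma)$; (ii) the hypothesized Fredholmness of each local representative $a_tP+Q$, which supplies bounded left and right regularizers; and (iii) the compactness of $\Gamma$, which makes a finite-cover gluing possible.

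\textbf{Step 1 (local regularizers).} Fix $t\in\Gamma$ and pick $B_t\in\cB(L^{p(\cdot)}(\Gamma,w))$ and compact $K_t,K_t'$ with $B_t(a_tP+Q)=I+K_t$ and $(a_tP+Q)B_t=I+K_t'$. Given $\varepsilon_t>0$, use local equivalence to select $c_t\in C(\Gamma)$ with $c_t(t)=1$ and $\|(a-a_t)c_t\|_\infty<\varepsilon_t$. Splitting
$(aP+Q)c_tI=(a_tP+Q)c_tI+(a-a_t)c_tP+(a-a_t)[P,c_tI]$
and invoking (i) to absorb the last term into the compact ideal $\cK:=\cK(L^{p(\cdot)}(\Gamma,w))$, one arrives at
\[
B_t(aP+Q)c_tI\equiv c_tI+N_t\pmod{\cK},\qquad \|N_t\|\le\varepsilon_t\|B_t\|\|P\|,
\]
together with an analogous identity $c_tI(aP+Q)B_t\equiv c_tI+N_t'\pmod{\cK}$ on the right, derived by commuting $c_tI$ past $aP+Q$ modulo $\cK$.

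\textbf{Step 2 (global assembly).} Cover $\Gamma$ by finitely many open arcs $U_1,\dots,U_N$ centered at points $t_j$ on which $c_{t_j}$ is bounded away from zero, and fix a continuous partition of unity $\{\phi_j\}$ subordinate to $\{U_j\}$, so that each $\phi_j/c_{t_j}$ extends to a function in $C(\Gamma)$. Using (i) again to interchange $c_{t_j}I$ and $aP+Q$ modulo $\cK$, set
\[
R_\ell:=\sum_{j=1}^N(\phi_j/c_{t_j})I\cdot B_{t_j}\cdot c_{t_j}I.
\]
Then modulo $\cK$,
\[
R_\ell(aP+Q)\equiv\sum_j(\phi_j/c_{t_j})I\bigl(B_{t_j}(aP+Q)c_{t_j}I\bigr)\equiv\sum_j\phi_jI+\sum_j(\phi_j/c_{t_j})IN_{t_j}=I+T_\ell,
\]
with $\|T_\ell\|\le\sum_j\|\phi_j/c_{t_j}\|_\infty\varepsilon_{t_j}\|B_{t_j}\|\|P\|$. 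Choosing each $\varepsilon_{t_j}$ small enough that $\|T_\ell\|<1/2$ renders $I+T_\ell$ invertible, so $(I+T_\ell)^{-1}R_\ell$ is a left regularizer of $aP+Q$ modulo $\cK$. A symmetric construction produces a right regularizer, so $aP+Q$ is Fredholm.

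The chief obstacle is the bookkeeping in Step~2: one must shrink the cover carefully so that every quotient $\phi_j/c_{t_j}$ is globally continuous, and then track, via (i), the several commutator corrections arising from interchanging $c_{t_j}I$ with $P$ and with $aP+Q$. The compactness of these commutators on the variable Lebesgue space $L^{p(\cdot)}(\Gamma,w)$---non-trivial in our setting and imported from \cite{Karlovich03}---is the technical backbone without which the scheme would collapse.
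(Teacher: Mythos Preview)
The paper does not give a self-contained proof; it simply records that the statement follows from \cite[Theorem~6.13]{Karlovich03}. Your scheme---local regularizers from the Fredholmness of each $a_tP+Q$, glued by a partition of unity, with the commutators $[cI,S]$ absorbed into $\cK$---is the correct Simonenko template, and the analytic ingredients you invoke are the right ones.

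There is, however, a genuine quantifier-ordering gap in Step~2. The functions $c_{t_j}$, and hence the arcs $U_j$, their number $N$, the partition $\{\phi_j\}$, and the norms $\|\phi_j/c_{t_j}\|_\infty$, are all determined by the $\varepsilon_{t_j}$ chosen in Step~1. You then wish to ``choose each $\varepsilon_{t_j}$ small enough that $\|T_\ell\|<1/2$,'' but the bound
\[
\|T_\ell\|\le\sum_{j=1}^N\|\phi_j/c_{t_j}\|_\infty\,\varepsilon_{t_j}\,\|B_{t_j}\|\,\|P\|
\]
has $N$ summands, and shrinking the $\varepsilon_{t_j}$ shrinks the $U_j$ and forces $N$ to grow; nothing prevents $N\cdot\max_j\varepsilon_{t_j}\|B_{t_j}\|$ from staying bounded away from zero. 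This circularity is not cosmetic---the naive patching does not close. The clean repair, and presumably what \cite{Karlovich03} does, is to drop the norm estimate and work in the Calkin algebra via the Allan--Douglas local principle (Theorem~\ref{th:AllanDouglas} here): local equivalence yields $((a-a_t)P)^\pi\in\cJ_t$ directly, because for every $c\in C(\Gamma)$ with $c(t)=1$ one has $((a-a_t)(1-c)P)^\pi\in\cJ_t$, so the distance from $((a-a_t)P)^\pi$ to the closed ideal $\cJ_t$ is at most $\|(a-a_t)c\|_\infty\|P\|$, and the infimum over such $c$ is zero. Hence $(aP+Q)^\pi+\cJ_t=(a_tP+Q)^\pi+\cJ_t$ is invertible in every local quotient $\cL^\pi/\cJ_t$, and Allan--Douglas gives invertibility of $(aP+Q)^\pi$ in $\cL^\pi$, that is, Fredholmness of $aP+Q$. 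This is exactly the machinery the paper itself deploys in Section~\ref{sect:6}.
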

%%%%%%%%%%%%%%%%%%%%%%%%%%%%%%%%%%%%%%%%%%%%%%%%%%%%%%%%%%%%%%%%%%%%%%%%%
For weighted Lebesgue spaces this theorem is known as Simonenko's local
principle \cite{Simonenko65}. It follows from \cite[Theorem~6.13]{Karlovich03}.
%%%%%%%%%%%%%%%%%%%%%%%%%%%%%%%%%%%%%%%%%%%%%%%%%%%%%%%%%%%%%%%%%%%%%%%%%
\subsection{Wiener-Hopf factorization}
The curve $\Gamma$ divides the complex plane $\mathbb{C}$ into the bounded
simply connected domain $D^+$ and the unbounded domain $D^-$. Without loss
of generality we assume that $0\in D^+$. We say that a function
$a\in L^\infty(\Gamma)$ admits a \textit{Wiener-Hopf factorization on}
$L^{p(\cdot)}(\Gamma,w)$ if $1/a\in L^\infty(\Gamma)$ and $a$ can be written
in the form
%%%
\begin{equation}\label{eq:WH}
a(t)=a_-(t)t^\kappa a_+(t)
\quad\mbox{a.e. on}\ \Gamma,
\end{equation}
%%%
where $\kappa\in\Z$, and the factors $a_\pm$ enjoy the following properties:
\[
\begin{array}{lll}
({\rm i}) &
a_-\in QL^{p(\cdot)}(\Gamma,w)\stackrel{\cdot}{+}\mathbb{C}, &
1/a_-\in QL^{q(\cdot)}(\Gamma,1/w)\stackrel{\cdot}{+}\mathbb{C},
\\[2mm]
&a_+\in PL^{q(\cdot)}(\Gamma,1/w), &
1/a_+\in PL^{p(\cdot)}(\Gamma,w),
\\[2mm]
({\rm ii}) & \mbox{the operator $(1/a_+)Sa_+I$} &
\mbox{is bounded on $L^{p(\cdot)}(\Gamma,w)$.}
\end{array}
\]
One can prove that the number $\kappa$ is uniquely determined.
%%%%%%%%%%%%%%%%%%%%%%%%%%%%%%%%%%%%%%%%%%%%%%%%%%%%%%%%%%%%%%%%%%%%%%%%
\begin{theorem}\label{th:factorization}
A function $a\in L^\infty(\Gamma)$ admits a Wiener-Hopf factorization
\eqref{eq:WH} on $L^{p(\cdot)}(\Gamma,w)$ if and only if the operator $aP+Q$
is Fredholm on $L^{p(\cdot)}(\Gamma,w)$. If $aP+Q$ is Fredholm, then its
index is equal to $-\kappa$.
\end{theorem}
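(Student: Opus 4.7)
The plan is to treat the two implications separately, following the classical Gohberg--Krupnik / B\"ottcher--Karlovich scheme adapted to the variable Lebesgue setting. Throughout, the key algebraic backbone is the ``Toeplitz'' operator $T_\kappa := t^\kappa P + Q$, whose Fredholm properties on $L^{p(\cdot)}(\Gamma,w)$ can be computed directly: its kernel consists (after pulling back through $P,Q$) of polynomials of degree $<\kappa$ when $\kappa>0$, and analogously for the cokernel when $\kappa<0$, yielding $\operatorname{ind}T_\kappa = -\kappa$.

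\textbf{Sufficiency and the index formula.} Assume a factorization $a=a_- t^\kappa a_+$ with the stated properties. Property (i) encodes the ``analyticity'' of the factors: $a_+$ and $1/a_+$ live in the range of $P$ (on dual pairs of variable Lebesgue spaces), and $a_-$, $1/a_-$ live in the range of $Q$ modulo constants. These yield the projection identities $Pa_+^{\pm 1}P = a_+^{\pm 1}P$ and $Qa_-^{\pm 1}Q = a_-^{\pm 1}Q$ on suitable dense subspaces, interpreted via the duality $\langle L^{p(\cdot)}(\Gamma,w),L^{q(\cdot)}(\Gamma,1/w)\rangle$. Property (ii) guarantees that multiplication by $a_+$ and $1/a_+$ intertwined with $P$ yields bounded operators. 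Combining these, one establishes the operator identity
\[
aP+Q \;=\; a_-(P+a_-^{-1}Q)\, T_\kappa\, (a_+P+Q)\,\frac{1}{a_+}
\]
(or an equivalent factorization into three bounded, invertible-up-to-compact factors around $T_\kappa$). Since the outer factors are invertible on $L^{p(\cdot)}(\Gamma,w)$ with bounded inverses of the same type, $aP+Q$ is Fredholm, and by the multiplicativity of the index, $\operatorname{ind}(aP+Q)=\operatorname{ind}T_\kappa=-\kappa$.

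\textbf{Necessity and uniqueness of $\kappa$.} Assume $aP+Q$ is Fredholm. By Theorem~\ref{th:necessity}, $1/a\in L^\infty(\Gamma)$. Set $\kappa := -\operatorname{ind}(aP+Q)$ and $b := t^{-\kappa}a$, so that $bP+Q$ is Fredholm of index $0$; since $T_{-\kappa}$ is invertible up to a finite-dimensional correction with explicit factors $t^{\pm\kappa}$, it suffices to factorize $b$ as $b_- b_+$ with the analogous properties and then absorb the shift. For the index-zero case, the plan is to construct the factors through a Riemann--Hilbert argument: one picks a canonical element $f$ (e.g.\ $f := (bP+Q)^{-1}\mathbf{1}$ modulo the finite-dimensional kernel/cokernel), and defines $a_+ := Pf + \text{const}$ and $a_-^{-1} := b\, a_+$, verifying that $b\, a_+ = a_-$ a.e.\ on $\Gamma$ by checking that $Q(b\, a_+) = b\, a_+$ (the ``minus'' property). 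The integrability of the factors in the spaces $PL^{q(\cdot)}(\Gamma,1/w)$ and $QL^{p(\cdot)}(\Gamma,w)\dotplus\mathbb{C}$ is read off from the mapping properties of the Fredholm inverse, and property (ii) is established by expressing $(1/a_+)Sa_+I$ in terms of $(bP+Q)^{-1}$ and the bounded operator $S$. Uniqueness of $\kappa$ follows because two factorizations with different $\kappa$ would produce different Fredholm indices of $aP+Q$ by the sufficiency direction.

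\textbf{Main obstacle.} The hard part is the necessity direction, specifically verifying that the candidate factors $a_\pm$ built from the Fredholm inverse actually satisfy the membership conditions in the variable Lebesgue spaces listed in (i) and, crucially, condition (ii). In the classical constant-$p$, unweighted setting, the boundedness of $(1/a_+)Sa_+I$ is automatic once $a_\pm$ have enough integrability because $S$ is bounded on the relevant Muckenhoupt-weighted space. Here one must keep track of how the weight $w$, the variable exponent $p(\cdot)$ satisfying the log-H\"older condition, and the factor $a_+$ interact; this is the point where the results of Section~\ref{sect:2} on indicator functions and the $A_{p(\cdot)}(\Gamma)$ class must be leveraged, ultimately reducing the verification to the fact that the ``twisted'' weight $w\cdot|a_+|$ still lies in $A_{p(\cdot)}(\Gamma)$ under the spectral constraints produced by the Fredholm hypothesis.
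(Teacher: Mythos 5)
For calibration: the paper does not prove Theorem~\ref{th:factorization} at all; it is imported from \cite[Theorem~6.14]{Karlovich03} (going back to Simonenko), so you are in effect reconstructing that classical argument. Your overall architecture is the right one, but two steps, as written, would fail. In the sufficiency direction, the displayed identity is incorrect: since $a_-(P+a_-^{-1}Q)=a_-P+Q$, your right-hand side equals $(aP+Q)\frac{1}{a_+}I$, not $aP+Q$; the correct formal identity (for $\kappa\ge 0$, on a dense subspace) is $aP+Q=(a_-P+Q)(t^{\kappa}P+Q)(a_+P+Q)$. More seriously, the ``outer factors'' involve multiplication by the unbounded functions $a_\pm^{\pm 1}$, so they are not bounded, let alone invertible, on $L^{p(\cdot)}(\Gamma,w)$, and multiplicativity of the index cannot be invoked for them. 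The standard repair is to define the regularizer $\frac{1}{a_+}(t^{-\kappa}P+Q)\frac{1}{a_-}I$ on a dense subspace, use property (ii) (equivalently, boundedness of $\frac{1}{a_+}Pa_+I$ and $\frac{1}{a_+}Qa_+I$) to extend it boundedly, and then compute kernel and cokernel directly to get the index $-\kappa$.

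In the necessity direction there are two missing ideas. First, index-zero Fredholmness of $bP+Q$ with $b=t^{-\kappa}a$ does not let you form $a_+:=(bP+Q)^{-1}\mathbf{1}$ ``modulo the finite-dimensional kernel/cokernel''; you need genuine invertibility, and that requires a Coburn-type lemma (for scalar $b$ with $b^{-1}\in L^\infty(\Gamma)$, either $\Ker(bP+Q)$ or $\Ker(bP+Q)^*$ is trivial), which your sketch never supplies. Second, your proposed route to property (ii) --- showing that the twisted weight $w|a_+|$ lies in $A_{p(\cdot)}(\Gamma)$ --- cannot close the argument in this setting: by Theorem~\ref{th:Ap}, membership in $A_{p(\cdot)}(\Gamma)$ is only a \emph{necessary} condition for the boundedness of $S$ on a weighted variable Lebesgue space, and no sufficiency theorem of that generality is available (this is exactly why the paper has to rely on the special boundedness result of Theorem~\ref{th:KSS}). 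The way this is actually handled in \cite{Karlovich03} is to verify (ii) through an operator identity on a dense set, of the type $\frac{1}{a_+}Pa_+h=P(bP+Q)^{-1}bh$, which expresses the twisted projection through the bounded inverse of $bP+Q$ rather than through a weight condition. With these repairs your outline becomes the standard proof; without them the two directions do not go through.
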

%%%%%%%%%%%%%%%%%%%%%%%%%%%%%%%%%%%%%%%%%%%%%%%%%%%%%%%%%%%%%%%%%%%%%%%%%%%%
This theorem goes back to Simonenko \cite{Simonenko64,Simonenko68}.
For more about this topic we refer to \cite[Section~6.12]{BK97},
\cite[Section~5.5]{BS06}, \cite[Section~8.3]{GK92} and also to \cite{CG81,LS87}
in the case of weighted Lebesgue spaces. Theorem~\ref{th:factorization} follows
from \cite[Theorem~6.14]{Karlovich03}.
%%%%%%%%%%%%%%%%%%%%%%%%%%%%%%%%%%%%%%%%%%%%%%%%%%%%%%%%%%%%%%%%%%%%%%%%%%%%
\section{Singular integral operators with $PC$ coefficients}
\label{sect:4}
\subsection{Necessary condition for Fredholmness in case of arbitrary weights}
\label{sec:4.1}
%%%%%%%%%%%%%%%%%%%%%%%%%%%%%%%%%%%%%%%%%%%%%%%%%%%%%%%%%%%%%%%%%%%%%%%%
\begin{theorem}[see \cite{Karlovich03}, Theorem~8.1]
\label{th:Nakano-necessity}
Let $\Gamma$ be a Carleson Jordan curve and let $p:\Gamma\to(1,\infty)$
 be a continuous function satisfying \eqref{eq:Dini-Lipschitz}. Suppose
$w:\Gamma\to[0,\infty]$ is an arbitrary weight such that the operator $S$
is bounded on $L^{p(\cdot)}(\Gamma,w)$. If the operator $aP+Q$, where
$a\in PC(\Gamma)$, is Fredholm on $L^{p(\cdot)}(\Gamma,w)$, then
$a(t\pm 0)\ne 0$ and
%%%
\[
-
\frac{1}{2\pi}\arg\frac{a(t-0)}{a(t+0)}+\frac{1}{p(t)}+
\theta\alpha_t\left(\frac{1}{2\pi}\log\left|\frac{a(t-0)}{a(t+0)}\right|\right)
+
(1-\theta)
\beta_t\left(\frac{1}{2\pi}\log\left|\frac{a(t-0)}{a(t+0)}\right|\right)
\]
is not an integer number for all $t\in\Gamma$ and all $\theta\in[0,1]$.
\end{theorem}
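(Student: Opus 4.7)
The plan is to localize Fredholmness to each point $t\in\Gamma$, apply the Wiener--Hopf factorization theorem to a canonical local representative of $a$ at $t$, and translate its solvability on $L^{p(\cdot)}(\Gamma,w)$ into the displayed arithmetic condition.

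First, I would dispose of $a(t\pm 0)\neq 0$: by Theorem~\ref{th:necessity}, Fredholmness of $aP+Q$ forces $1/a\in L^\infty(\Gamma)$, which is incompatible with a piecewise continuous $a$ having a vanishing one-sided limit at any point.

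Next, I would localize. By the standard converse of Simonenko's local principle (cf.\ Theorem~\ref{th:local_principle}), Fredholmness of $aP+Q$ implies Fredholmness of $a_tP+Q$ for every local representative $a_t$ of $a$ at $t$. I would pick $a_t$ to be the canonical two-valued piecewise constant function taking the values $a(t-0)$ and $a(t+0)$ on the two arcs of $\Gamma$ meeting $t$, and then factor out a nonvanishing continuous term so that the only remaining local feature is the jump $c:=a(t-0)/a(t+0)\in\C\setminus\{0\}$.

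The core step is then the Wiener--Hopf factorization theorem (Theorem~\ref{th:factorization}): $a_tP+Q$ is Fredholm iff $a_t=a_-t^\kappa a_+$ on $L^{p(\cdot)}(\Gamma,w)$ with $a_\pm$ in the prescribed Hardy-type subspaces. Up to a nonvanishing continuous factor, $a_t$ is realized by the boundary value of the complex power $(\tau-t)^\mu$ with $\mu=(2\pi i)^{-1}\log c$, so $\operatorname{Re}\mu=\frac{\arg c}{2\pi}$ and $\operatorname{Im}\mu=-\frac{\log|c|}{2\pi}$. The factor $a_+$ then has local modulus of the form $|\tau-t|^{\operatorname{Re}\mu-\kappa}\eta_t(\tau)^{-\operatorname{Im}\mu}$, and its membership in $PL^{q(\cdot)}(\Gamma,1/w)$ together with $1/a_+\in PL^{p(\cdot)}(\Gamma,w)$ is, by Lemma~\ref{le:indicator-calculation}, governed exactly by the window
\[
\left[\tfrac{1}{p(t)}+\alpha_t(x),\ \tfrac{1}{p(t)}+\beta_t(x)\right], \qquad x=\tfrac{1}{2\pi}\log|c|,
\]
shifted by $-\tfrac{1}{2\pi}\arg c$. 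An admissible integer $\kappa$ exists iff this closed interval contains no integer, which is precisely the assertion that the convex combination $-\tfrac{1}{2\pi}\arg c+\tfrac{1}{p(t)}+\theta\alpha_t(x)+(1-\theta)\beta_t(x)$ with $\theta\in[0,1]$ misses $\Z$.

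The main obstacle is pinpointing the admissibility window as exactly $[1/p(t)+\alpha_t(x),1/p(t)+\beta_t(x)]$ rather than some larger or smaller range; this is the quantitative heart of the proof. It requires the full submultiplicative-function machinery of Section~\ref{sect:2}, in particular the control of $V_t^0(\eta_t^x w)$ that underlies the indicator functions $\alpha_t,\beta_t$. This calculation is precisely the content of \cite[Theorem~8.1]{Karlovich03}, whose conclusion is quoted here.
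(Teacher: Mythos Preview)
The paper does not prove Theorem~\ref{th:Nakano-necessity} at all: it is quoted from \cite[Theorem~8.1]{Karlovich03}, with only the remark that Theorem~\ref{th:Ap} and Lemma~\ref{le:indicator-existence} make $\alpha_t,\beta_t$ well defined. There is thus no in-paper proof to compare against; your outline is a reconstruction of the argument in the cited source, and in broad strokes (localize, pass to a canonical local model, invoke Wiener--Hopf factorization, read off the index window) it is on the right track.

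Two genuine rough spots, though. First, what you call ``the standard converse of Simonenko's local principle (cf.\ Theorem~\ref{th:local_principle})'' is not the content of Theorem~\ref{th:local_principle}, which only gives the implication \emph{local Fredholmness $\Rightarrow$ global Fredholmness}. To go the other way one uses the two-sided Allan--Douglas principle on the Calkin algebra, and for that the local representative must be chosen so that it causes no trouble at points $s\neq t$. Your two-valued piecewise constant function has a second jump where the two arcs rejoin, and local invertibility there is \emph{not} automatic. The device actually used (and set up in Section~4.2 of this paper) is the analytic power $g_{t,\gamma_t}$, which is continuous and nonvanishing on $\Gamma\setminus\{t\}$; with that choice the Allan--Douglas argument does yield Fredholmness of $g_{t,\gamma_t}P+Q$ from Fredholmness of $aP+Q$. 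Second, you invoke Lemma~\ref{le:indicator-calculation} to identify the admissibility window, but that lemma assumes the logarithmic whirl condition \eqref{eq:spiralic}, which Theorem~\ref{th:Nakano-necessity} does \emph{not} impose. The indicator functions $\alpha_t,\beta_t$ must be kept in their general form (Lemma~\ref{le:indicator-existence}); the concrete identification with $\delta(t)x+\alpha(V_t^0w)$ etc.\ only enters later, in Theorem~\ref{th:Fredholm}. With these two corrections your sketch matches the strategy of \cite{Karlovich03}.
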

%%%%%%%%%%%%%%%%%%%%%%%%%%%%%%%%%%%%%%%%%%%%%%%%%%%%%%%%%%%%%%%%%%%%%%%%%%%%
Notice that from Theorem~\ref{th:Ap} and Lemma~\ref{le:indicator-existence}
it follows that the indicator functions $\alpha_t$ and $\beta_t$ are well
defined for all $t\in\Gamma$.

For standard Lebesgue spaces with Muckenhoupt weights the converse
to Theorem~\ref{th:Nakano-necessity} is also true (see \cite[Proposition~7.3]{BK97}).
%%%%%%%%%%%%%%%%%%%%%%%%%%%%%%%%%%%%%%%%%%%%%%%%%%%%%%%%%%%%%%%%%%%%%%%%%%%%
\begin{corollary}\label{co:boundedness-necessity}
Let $\Gamma$ be a Carleson Jordan curve and let $p:\Gamma\to(1,\infty)$
be a continuous function satisfying \eqref{eq:Dini-Lipschitz}. If
$w:\Gamma\to[0,\infty]$ is an arbitrary weight such that the operator $S$
is bounded on $L^{p(\cdot)}(\Gamma,w)$, then
%%%
\begin{equation}\label{eq:boundedness-necessity-1}
0<1/p(t)+\alpha(V_t^0w),\quad 1/p(t)+\beta(V_t^0w)<1
\end{equation}
%%%
for every $t\in\Gamma$.
\end{corollary}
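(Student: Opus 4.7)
The plan is to apply Theorem~\ref{th:Nakano-necessity} to the trivially Fredholm operator $I = P + Q$, which corresponds to the piecewise continuous coefficient $a \equiv 1$, and to combine the resulting non-integrality statement with the two-sided estimates on the indicator functions supplied by Lemma~\ref{le:indicator-existence}(b).

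First I would observe that the hypothesis ``$S$ is bounded on $L^{p(\cdot)}(\Gamma,w)$'' together with Theorem~\ref{th:Ap} forces $w \in A_{p(\cdot)}(\Gamma)$. This unlocks Lemma~\ref{le:indicator-existence}: the indicator functions $\alpha_t,\beta_t$ are well defined at every $t \in \Gamma$, and, evaluating Lemma~\ref{le:indicator-existence}(b) at $x = 0$, one has
\[
0 \le 1/p(t) + \alpha(V_t^0 w), \qquad 1/p(t) + \beta(V_t^0 w) \le 1
\]
for every $t \in \Gamma$, since $\alpha_t(0) = \alpha(V_t^0 w)$ and $\beta_t(0) = \beta(V_t^0 w)$ directly from the definition of the indicator functions.

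Next I would apply Theorem~\ref{th:Nakano-necessity} with $a \equiv 1$, for which $aP + Q = I$ is Fredholm on $L^{p(\cdot)}(\Gamma,w)$. Since $a(t \pm 0) = 1$, both $\arg(a(t-0)/a(t+0))$ and $\log|a(t-0)/a(t+0)|$ vanish, so Theorem~\ref{th:Nakano-necessity} asserts that
\[
\frac{1}{p(t)} + \theta\, \alpha(V_t^0 w) + (1 - \theta)\, \beta(V_t^0 w) \notin \Z
\]
for every $t \in \Gamma$ and every $\theta \in [0,1]$. Specializing to $\theta = 1$ gives $1/p(t) + \alpha(V_t^0 w) \notin \Z$, and specializing to $\theta = 0$ gives $1/p(t) + \beta(V_t^0 w) \notin \Z$.

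Finally I would combine the two conclusions: each of $1/p(t) + \alpha(V_t^0 w)$ and $1/p(t) + \beta(V_t^0 w)$ lies in the closed interval $[0,1]$ by the first step and is not an integer by the second step, so both must lie in the open interval $(0,1)$. This gives exactly \eqref{eq:boundedness-necessity-1}. The only conceptual point to verify carefully is the identification $\alpha_t(0) = \alpha(V_t^0 w)$ and $\beta_t(0) = \beta(V_t^0 w)$ (because $\eta_t^0 \equiv 1$), and that Theorem~\ref{th:Nakano-necessity} is genuinely applicable with $a \equiv 1$; no real obstacle arises, since $I$ is Fredholm and $1 \in PC(\Gamma)$.
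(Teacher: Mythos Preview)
Your proof is correct and follows essentially the same approach as the paper's own proof: apply Theorem~\ref{th:Ap} and Lemma~\ref{le:indicator-existence}(b) to obtain the non-strict bounds, then apply Theorem~\ref{th:Nakano-necessity} to the identity operator $aP+Q=I$ (with $a\equiv 1$) to exclude the integer endpoints, and combine. One minor wording issue: Lemma~\ref{le:indicator-existence}(b) already states the inequalities for $\alpha(V_t^0w)$ and $\beta(V_t^0w)$ directly, so there is no ``evaluation at $x=0$'' needed there; but this does not affect the argument.
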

%%%%%%%%%%%%%%%%%%%%%%%%%%%%%%%%%%%%%%%%%%%%%%%%%%%%%%%%%%%%%%%%%%%%%%%%%%%%
\begin{proof}
From Theorem~\ref{th:Ap} and Lemma~\ref{le:indicator-existence}(b) it follows
that
%%%
%%%
\begin{equation}\label{eq:boundedness-necessity-2}
0\le 1/p(t)+\alpha(V_t^0w),\quad 1/p(t)+\beta(V_t^0w)\le 1.
\end{equation}
%%%
On the other hand, if we take $a=1$, then $aP+Q=I$ is obviously invertible
on $L^{p(\cdot)}(\Gamma,w)$. By Theorem~\ref{th:Nakano-necessity},
%%%
\begin{equation}\label{eq:boundedness-necessity-3}
1/p(t)+\theta\alpha(V_t^0w)+(1-\theta)\beta(V_t^0w)\notin\Z
\end{equation}
%%%
for all $t\in\Gamma$ and all $\theta\in[0,1]$. Combining
\eqref{eq:boundedness-necessity-2} and \eqref{eq:boundedness-necessity-3}
with $\theta=0$ and $\theta=1$, we arrive at \eqref{eq:boundedness-necessity-1}.
\end{proof}
%%%%%%%%%%%%%%%%%%%%%%%%%%%%%%%%%%%%%%%%%%%%%%%%%%%%%%%%%%%%%%%%%%%%%%%%%%%%
\begin{corollary}
Let $\Gamma$ be a Carleson Jordan curve and $p:\Gamma\to(1,\infty)$
be a continuous function satisfying \eqref{eq:Dini-Lipschitz}.
Suppose a weight $w$ is given by \eqref{eq:weight}, where $w_1,\dots,w_n\in\W$.
If the Cauchy singular integral operator $S$ is bounded on the space
$L^{p(\cdot)}(\Gamma,w)$, then the condition \eqref{eq:KSS-condition} is
fulfilled.
\end{corollary}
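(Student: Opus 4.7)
The plan is to deduce this corollary directly by combining the previous Corollary~\ref{co:boundedness-necessity} with Theorem~\ref{th:radial-MO}, so essentially no new work is required; the two results are set up precisely to fit together at the distinguished points $t_1,\dots,t_n$.

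First, I would invoke Corollary~\ref{co:boundedness-necessity}: since $S$ is assumed bounded on $L^{p(\cdot)}(\Gamma,w)$, the inequalities
\[
0<1/p(t)+\alpha(V_t^0 w),\qquad 1/p(t)+\beta(V_t^0 w)<1
\]
hold at every point $t\in\Gamma$. In particular, they hold at each of the points $t=t_k$, $k\in\{1,\dots,n\}$. Note this step already tacitly uses that $w\in A_{p(\cdot)}(\Gamma)$ (by Theorem~\ref{th:Ap}), so that the indices $\alpha(V_t^0 w)$ and $\beta(V_t^0 w)$ are well defined via Lemma~\ref{le:indicator-existence}.

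Next, I would use Theorem~\ref{th:radial-MO} to identify these indices in terms of the Matuszewska--Orlicz indices of the factor weights. Since $w$ has the form \eqref{eq:weight} with $w_1,\dots,w_n\in\W$, the theorem gives
\[
\alpha(V_{t_k}^0 w)=m(w_k),\qquad \beta(V_{t_k}^0 w)=M(w_k),\qquad k\in\{1,\dots,n\}.
\]
Substituting these equalities into the inequalities from Corollary~\ref{co:boundedness-necessity} evaluated at $t=t_k$ yields precisely
\[
0<1/p(t_k)+m(w_k),\qquad 1/p(t_k)+M(w_k)<1,
\]
for every $k\in\{1,\dots,n\}$, which is the condition \eqref{eq:KSS-condition}.

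There is no real obstacle here; the entire proof is a two-line citation. All the substantive work has already been carried out earlier: the deep ingredient is Theorem~\ref{th:Nakano-necessity} (from \cite{Karlovich03}) applied to $a\equiv 1$ in the derivation of Corollary~\ref{co:boundedness-necessity}, and the identification of the local indices of powerlikeness of the radial product weight $w$ with the Matuszewska--Orlicz indices $m(w_k),M(w_k)$, which was the purpose of Theorem~\ref{th:radial-MO}.
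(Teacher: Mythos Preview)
Your proposal is correct and matches the paper's own proof exactly: the paper simply states that the corollary follows from Corollary~\ref{co:boundedness-necessity} and Theorem~\ref{th:radial-MO}. Your write-up spells out the two-line citation in more detail, but the approach is identical.
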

%%%%%%%%%%%%%%%%%%%%%%%%%%%%%%%%%%%%%%%%%%%%%%%%%%%%%%%%%%%%%%%%%%%%%%%%%%%%
This result follows from Corollary~\ref{co:boundedness-necessity} and
Theorem~\ref{th:radial-MO}. It gives the necessity portion of
Theorem~\ref{th:KSS}.
%%%%%%%%%%%%%%%%%%%%%%%%%%%%%%%%%%%%%%%%%%%%%%%%%%%%%%%%%%%%%%%%%%%%%%%%%%%%
\subsection{Wiener-Hopf factorization of local representatives}
Fix $t\in\Gamma$.
For a function $a\in PC(\Gamma)$ such that $a^{-1}\in L^\infty(\Gamma)$,
we construct a ``canonical'' function $g_{t,\gamma}$ which is locally equivalent
to $a$ at the point $t\in\Gamma$. The interior and the exterior of the unit circle
can be conformally mapped onto $D^+$ and $D^-$ of $\Gamma$, respectively,
so that the point $1$ is mapped to $t$, and the points $0\in D^+$ and
$\infty\in D^-$ remain fixed. Let $\Lambda_0$ and $\Lambda_\infty$
denote the images of $[0,1]$ and $[1,\infty)\cup\{\infty\}$ under this map.
The curve $\Lambda_0\cup\Lambda_\infty$ joins $0$ to $\infty$ and
meets $\Gamma$ at exactly one point, namely $t$. Let $\arg z$ be a
continuous branch of argument in $\mathbb{C}\setminus(\Lambda_0\cup\Lambda_\infty)$.
For $\gamma\in\mathbb{C}$, define the function $z^\gamma:=|z|^\gamma e^{i\gamma\arg z}$,
where $z\in\mathbb{C}\setminus(\Lambda_0\cup\Lambda_\infty)$. Clearly, $z^\gamma$
is an analytic function in $\mathbb{C}\setminus(\Lambda_0\cup\Lambda_\infty)$. The
restriction of $z^\gamma$ to $\Gamma\setminus\{t\}$ will be denoted by
$g_{t,\gamma}$. Obviously, $g_{t,\gamma}$ is continuous and nonzero on
$\Gamma\setminus\{t\}$. Since $a(t\pm 0)\ne 0$, we can define
$\gamma_t=\gamma\in\mathbb{C}$ by the formulas
%%%
\begin{equation}\label{eq:local-representative}
\operatorname{Re}\gamma_t:=\frac{1}{2\pi}\arg\frac{a(t-0)}{a(t+0)},
\quad
\operatorname{Im}\gamma_t:=-\frac{1}{2\pi}\log\left|\frac{a(t-0)}{a(t+0)}\right|,
\end{equation}
%%%
where we can take any value of $\arg(a(t-0)/a(t+0))$, which implies that
any two choices of $\operatorname{Re}\gamma_t$ differ by an integer only.
Clearly, there is a constant $c_t\in\mathbb{C}\setminus\{0\}$ such that
$a(t\pm 0)=c_tg_{t,\gamma_t}(t\pm 0)$, which means that $a$ is locally
equivalent to $c_tg_{t,\gamma_t}$ at the point $t\in\Gamma$.

For $t\in\Gamma$ and $\gamma\in\C$, consider the weight
\[
\varphi_{t,\gamma}(\tau):=|(\tau-t)^\gamma|,
\quad
t\in\Gamma\setminus\{t\}.
\]

From \cite[Lemma~7.1]{Karlovich03} we get the following.
%%%%%%%%%%%%%%%%%%%%%%%%%%%%%%%%%%%%%%%%%%%%%%%%%%%%%%%%%%%%%%%%%%%%%%%%%%%%
\begin{lemma}\label{le:fact-sufficiency}
Let $\Gamma$ be a Carleson Jordan curve and let $p:\Gamma\to(1,\infty)$ be a
continuous function. Suppose $w:\Gamma\to[0,\infty]$ is an arbitrary weight
such that the operator $S$ is bounded on $L^{p(\cdot)}(\Gamma,w)$. If, for
some $k\in\Z$ and $\gamma\in\C$, the operator
$\varphi_{t,k-\gamma}S\varphi_{t,\gamma-k}I$ is bounded on $L^{p(\cdot)}(\Gamma,w)$,
then the function $g_{t,\gamma}$ admits a Wiener-Hopf factorization on
$L^{p(\cdot)}(\Gamma,w)$.
\end{lemma}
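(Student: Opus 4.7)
The plan is to construct the factors $a_+$ and $a_-$ explicitly, setting $\kappa := k$, so that $g_{t,\gamma}(\tau) = a_-(\tau)\,\tau^k\, a_+(\tau)$ almost everywhere on $\Gamma$. The natural candidates come from the branches of $z^{\gamma-k}$ used to define $g_{t,\gamma}$, but refined by an outer-function construction: I would take $a_+$ to be an analytic function on $D^+$ whose boundary modulus on $\Gamma$ is comparable, up to a continuous nonvanishing factor, to $\varphi_{t,\gamma-k}$, and analogously $a_-$ analytic on $D^-\cup\{\infty\}$ with boundary modulus comparable to $\varphi_{t,k-\gamma}|\tau|^{\operatorname{Re}\gamma}$. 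The phase data must be calibrated so that $a_-\,\tau^k\, a_+$ reproduces $g_{t,\gamma}$ exactly and not merely up to a unimodular constant; here one exploits that $g_{t,\gamma}$ itself is defined through the same conformal maps of the disk and its exterior onto $D^\pm$.

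Condition (ii) then follows almost immediately. Because the Luxemburg norm depends only on moduli, $(1/a_+) S a_+ I$ is bounded on $L^{p(\cdot)}(\Gamma,w)$ if and only if $S$ is bounded on $L^{p(\cdot)}(\Gamma,w/|a_+|)$; and since $|a_+|\asymp\varphi_{t,\gamma-k}$ up to a bounded nonvanishing factor, the weight $w/|a_+|$ is equivalent to $w\varphi_{t,k-\gamma}$, so this is equivalent to the assumed boundedness of $\varphi_{t,k-\gamma} S \varphi_{t,\gamma-k} I$ on $L^{p(\cdot)}(\Gamma,w)$.

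For condition (i), the analyticity of $a_+$ in $D^+$ places $a_+$ in $\im P$, and analyticity of $a_-$ in $D^-\cup\{\infty\}$ (modulo its value at $\infty$) places $a_-$ in $\im Q \stackrel{\cdot}{+} \C$, provided the four $L^{\cdot}$-memberships hold. Via $|a_+^{\pm 1}|\asymp\varphi_{t,\pm(\gamma-k)}$ near $t$ and their boundedness away from $t$, these memberships reduce to $\varphi_{t,\gamma-k}\in L^{q(\cdot)}(\Gamma,1/w)$ and $\varphi_{t,k-\gamma}\in L^{p(\cdot)}(\Gamma,w)$, plus the analogues for $a_-$. The hypothesis of the lemma, combined with Theorem~\ref{th:Ap}, gives $w/|a_+|\in A_{p(\cdot)}(\Gamma)$, which, together with the compactness of $\Gamma$, yields the local integrabilities near $t$ and the bounds elsewhere that are needed.

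The hard part will be the outer-function construction of $a_\pm$ itself, with exactly the right modulus behavior and phase matching on $\Gamma$. One has to produce analytic functions in $D^\pm$ whose boundary values have prescribed modulus $\asymp\varphi_{t,\pm(\gamma-k)}$ and whose phases combine with $\tau^k$ to recover $g_{t,\gamma}$ on the nose. This is exactly what is done in \cite[Lemma~7.1]{Karlovich03}, following the scheme of \cite[Section~6.12]{BK97} for weighted standard Lebesgue spaces: the factors $a_\pm$ are written as explicit exponentials of Cauchy integrals over $\Gamma$, producing outer-type analytic functions with the prescribed modulus data.
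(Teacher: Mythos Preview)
The paper does not prove this lemma at all; it simply records it as a consequence of \cite[Lemma~7.1]{Karlovich03}. Your sketch is essentially the argument behind that citation, and you say as much in your last paragraph, so there is no conflict with the paper.

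One simplification worth noting: you frame the construction of $a_\pm$ as an ``outer-function construction'' via ``explicit exponentials of Cauchy integrals over $\Gamma$'', but for the special function $g_{t,\gamma}$ this machinery is unnecessary. The factors can be written down directly as branches of elementary power functions: take $a_+(z)=(z-t)^{\gamma-k}$ with the branch cut along $\Lambda_\infty\subset D^-\cup\{t,\infty\}$, so that $a_+$ is analytic in $D^+$, and set $a_-(z)=z^{\gamma-k}/(z-t)^{\gamma-k}$, whose discontinuities along $\Lambda_\infty$ cancel, leaving it analytic in $D^-$. With these explicit choices one has $|a_+|=\varphi_{t,\gamma-k}$ exactly (not merely $\asymp$), so your verification of condition~(ii) becomes an identity rather than a comparison of equivalent weights, and the memberships in condition~(i) reduce, as you indicate, to integrability of $\varphi_{t,\pm(\gamma-k)}$ in the appropriate weighted variable Lebesgue spaces, which follows from the assumed boundedness via Theorem~\ref{th:Ap}. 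This is the route taken in \cite[Chap.~6]{BK97} and carried over to the variable-exponent setting in \cite{Karlovich03}; your outline is correct, just slightly heavier than needed.
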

%%%%%%%%%%%%%%%%%%%%%%%%%%%%%%%%%%%%%%%%%%%%%%%%%%%%%%%%%%%%%%%%%%%%%%%%%%%%
\subsection{Fredholm criterion}
Now we are in a position to prove one of the main results of the paper.
%%%%%%%%%%%%%%%%%%%%%%%%%%%%%%%%%%%%%%%%%%%%%%%%%%%%%%%%%%%%%%%%%%%%%%%%%%%%
\begin{theorem}\label{th:Fredholm}
Let $\Gamma$ be a Carleson Jordan curve satisfying the logarithmic whirl
condition \eqref{eq:spiralic} at each point $t\in\Gamma$ and
$p:\Gamma\to(1,\infty)$ be a continuous function satisfying
\eqref{eq:Dini-Lipschitz}. Suppose a weight $w$ is given by
\eqref{eq:weight}, where for $w_1,\dots,w_n\in\W$ the condition
\eqref{eq:KSS-condition} is fulfilled. The operator $aP+Q$, where $a\in PC(\Gamma)$,
is Fredholm on the weighted variable Lebesgue space $L^{p(\cdot)}(\Gamma,w)$
if and only if $a(t\pm 0)\ne 0$ and
%%%
\begin{equation}\label{eq:Fredholm-1}
-\frac{1}{2\pi}\arg\frac{a(t-0)}{a(t+0)}
+\frac{\delta(t)}{2\pi}\log\left|\frac{a(t-0)}{a(t+0)}\right|
+\frac{1}{p(t)}+\theta\mu_t+(1-\theta)\nu_t\notin\Z
\end{equation}
%%%
for all $t\in\Gamma$ and all $\theta\in[0,1]$, where
%%%
\begin{equation}\label{eq:definition-mu-nu}
\begin{array}{llll}
\mu_{t_k}=m(w_k), & \nu_{t_k}=M(w_k) &\mbox{for}  &k\in\{1,\dots,n\},
\\[2mm]
\mu_t=0, & \nu_t=0 &  \mbox{for}   &t\in\Gamma\setminus\{t_1,\dots,t_n\}.
\end{array}
\end{equation}
\end{theorem}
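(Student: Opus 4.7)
My plan is to deduce both directions from the machinery assembled in Sections~\ref{sect:2}--\ref{sect:4}.

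\textbf{Necessity.} I would first apply Theorem~\ref{th:Ap} to obtain $w\in A_{p(\cdot)}(\Gamma)$, which makes the indicator functions $\alpha_t,\beta_t$ well defined via Lemma~\ref{le:indicator-existence}, and then invoke Theorem~\ref{th:Nakano-necessity}. Converting the resulting non-integer expression into the form of \eqref{eq:Fredholm-1} is a direct substitution: Lemma~\ref{le:indicator-calculation} gives $\alpha_t(x)=\alpha(V_t^0w)+\delta(t)x$ and $\beta_t(x)=\beta(V_t^0w)+\delta(t)x$, while Theorem~\ref{th:radial-MO} identifies $\alpha(V_t^0w)=\mu_t$ and $\beta(V_t^0w)=\nu_t$ in the notation of \eqref{eq:definition-mu-nu}. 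Evaluating at $x=\tfrac{1}{2\pi}\log|a(t-0)/a(t+0)|$, the $\delta(t)x$ contributions (which do not involve $\theta$) collapse into the second summand of \eqref{eq:Fredholm-1}.

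\textbf{Sufficiency.} Assume $a(t\pm 0)\ne 0$ and \eqref{eq:Fredholm-1}. The condition $a\in PC(\Gamma)$ together with non-vanishing one-sided limits forces $a^{-1}\in L^\infty(\Gamma)$. By Theorem~\ref{th:local_principle}, it suffices to prove Fredholmness of $a_tP+Q$ at each $t\in\Gamma$ for a local representative $a_t$ of $a$. I would choose $a_t=c_tg_{t,\gamma_t}$ with $\gamma_t$ defined by \eqref{eq:local-representative}; since $c_t\ne 0$, the task reduces to establishing Fredholmness of $g_{t,\gamma_t}P+Q$. By Theorem~\ref{th:factorization} this is equivalent to $g_{t,\gamma_t}$ admitting a Wiener-Hopf factorization on $L^{p(\cdot)}(\Gamma,w)$, and Lemma~\ref{le:fact-sufficiency} further reduces the problem to producing an integer $k$ for which $\varphi_{t,k-\gamma_t}S\varphi_{t,\gamma_t-k}I$ is bounded on $L^{p(\cdot)}(\Gamma,w)$. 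The substitution $f\mapsto\varphi_{t,\gamma_t-k}f$ shows this is equivalent to boundedness of $S$ on $L^{p(\cdot)}(\Gamma,w\varphi_{t,k-\gamma_t})$.

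\textbf{The main obstacle: finding $k$.} This is the only step that uses \eqref{eq:Fredholm-1}. Using the logarithmic whirl condition \eqref{eq:spiralic}, the weight $\varphi_{t,k-\gamma_t}$ is equivalent near $t$ (up to factors bounded above and below) to the power weight $|\tau-t|^{k-\lambda_0(t)}$, where
\[
\lambda_0(t):=\operatorname{Re}(\gamma_t)+\delta(t)\operatorname{Im}(\gamma_t)=\tfrac{1}{2\pi}\arg\tfrac{a(t-0)}{a(t+0)}-\tfrac{\delta(t)}{2\pi}\log\big|\tfrac{a(t-0)}{a(t+0)}\big|.
\]
Consequently $w\varphi_{t,k-\gamma_t}$ is equivalent to a weight of the form \eqref{eq:weight}, whose radial factor at $t$ (merged with the existing factor when $t=t_j$) has Matuszewska-Orlicz indices $\mu_t+k-\lambda_0(t)$ and $\nu_t+k-\lambda_0(t)$, while the factors at the remaining $t_j$ are unchanged and satisfy \eqref{eq:KSS-condition} by hypothesis. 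Theorem~\ref{th:KSS} then shows that $S$ is bounded precisely when $k$ lies in the open interval
\[
\big(\lambda_0(t)-\mu_t-\tfrac{1}{p(t)},\ \lambda_0(t)-\nu_t-\tfrac{1}{p(t)}+1\big)
\]
of length $1-(\nu_t-\mu_t)\in(0,1]$. Assumption \eqref{eq:Fredholm-1}, as $\theta$ sweeps over $[0,1]$, asserts that the closed interval $[-\lambda_0(t)+\tfrac{1}{p(t)}+\mu_t,\,-\lambda_0(t)+\tfrac{1}{p(t)}+\nu_t]$ of length $\nu_t-\mu_t<1$ contains no integer; the map $n\mapsto k=-n$ then identifies an integer in the previous open interval. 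This produces the required $k$ and completes the sufficiency.
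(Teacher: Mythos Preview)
Your proposal is correct and follows essentially the same route as the paper. For necessity you combine Theorem~\ref{th:Ap}, Lemma~\ref{le:indicator-existence}, Lemma~\ref{le:indicator-calculation}, Theorem~\ref{th:radial-MO}, and Theorem~\ref{th:Nakano-necessity}, exactly as the paper does; for sufficiency you use the local principle, reduce to $g_{t,\gamma_t}P+Q$, invoke Lemma~\ref{le:fact-sufficiency}, exploit the logarithmic whirl condition to replace $\varphi_{t,k-\gamma_t}$ by a pure power $|\tau-t|^{k-\lambda_0(t)}$, and then select the integer $k$ via Theorem~\ref{th:KSS}, which is again precisely the paper's argument (with your $k$ playing the role of the paper's $s_t$ and your $\lambda_0(t)$ equal to $\operatorname{Re}\gamma_t+\delta(t)\operatorname{Im}\gamma_t$). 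The only cosmetic difference is that you phrase the boundedness of $\varphi_{t,k-\gamma_t}S\varphi_{t,\gamma_t-k}I$ as boundedness of $S$ on $L^{p(\cdot)}(\Gamma,w\varphi_{t,k-\gamma_t})$ before choosing $k$, whereas the paper first fixes $s_t$ and then verifies boundedness; the content is identical.
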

%%%%%%%%%%%%%%%%%%%%%%%%%%%%%%%%%%%%%%%%%%%%%%%%%%%%%%%%%%%%%%%%%%%%%%%%%%%%
\begin{proof}
The proof is developed by analogy with the proof of \cite[Theorem~3.3]{Karlovich05}.

\textit{Necessity.}
By Theorem~\ref{th:KSS}, the operator $S$ is bounded on $L^{p(\cdot)}(\Gamma,w)$.
Then from Theorem~\ref{th:Ap} it follows that $w\in A_{p(\cdot)}(\Gamma)$.
Therefore, by Lemma~\ref{le:indicator-existence}(a), the function $V_t^0(\eta_t^xw)$
is regular and submultiplicative for all $t\in\Gamma$ and all $x\in\R$.
Since $\Gamma$ satisfies \eqref{eq:spiralic} for all $t\in\Gamma$, from
Lemma~\ref{le:indicator-calculation} and Theorem~\ref{th:radial-MO} it follows
that the indices of $V_t^0(\eta_t^xw)$ (that is, the  indicator functions
$\alpha_t,\beta_t$ of the pair $(\Gamma,w)$ at $t\in\Gamma$) are calculated by
%%%
\begin{equation}\label{eq:Fredholm-N1}
\alpha_{t_k}(x)=m(w_k)+\delta(t)x,
\quad
\beta_{t_k}(x)=M(w_k)+\delta(t)x
\quad(x\in\R)
\end{equation}
%%%
for $k\in\{1,\dots,n\}$ and
%%%
%%%
\begin{equation}\label{eq:Fredholm-N2}
\alpha_t(x)=\delta(t)x,
\quad
\beta_t(x)=\delta(t)x
\quad(x\in\R)
\end{equation}
%%%
for $t\in\Gamma\setminus\{t_1,\dots,t_n\}$. Theorem~\ref{th:Nakano-necessity}
and \eqref{eq:Fredholm-N1}--\eqref{eq:Fredholm-N2} imply \eqref{eq:Fredholm-1}
and $a(t\pm 0)$ for all $t\in\Gamma$ and all $\theta\in[0,1]$.

\textit{Sufficiency.} If $aP+Q$ is Fredholm, then, by Theorem~\ref{th:necessity},
$a(t\pm 0)\ne 0$ for all $t\in\Gamma$. Fix an arbitrary $t\in\Gamma$.
Choose $\gamma=\gamma_t\in\C$ as in \eqref{eq:local-representative}.
Then the function $a$ is locally equivalent to $c_tg_{t,\gamma_t}$ at the point
$t\in\Gamma$, where $c_t\in\C\setminus\{0\}$ is some constant.
{From} \eqref{eq:Fredholm-1} it follows that there exists an $s_t\in\Z$ such that
\[
0<s_t-\operatorname{Re}\gamma_t-
\delta(t)\operatorname{Im}\gamma_t+1/p(t)+\theta\mu_t+(1-\theta)\nu_t<1
\]
for all $\theta\in[0,1]$. In particular, if $\theta=1$, then
%%%
\begin{equation}\label{eq:Fredholm-2}
0<1/p(t)+\big(s_t-\operatorname{Re}\gamma_t-
\delta(t)\operatorname{Im}\gamma_t\big)+\mu_t;
\end{equation}
%%%
if $\theta=0$, then
%%%
\begin{equation}\label{eq:Fredholm-3}
1/p(t)+\big(s_t-\operatorname{Re}\gamma_t-
\delta(t)\operatorname{Im}\gamma_t\big)+\nu_t<1.
\end{equation}
%%%
Let $\psi_t(x):=
x^{s_t-\operatorname{Re}
\gamma_t-\delta(t)\operatorname{Im}\gamma_t}$ for $x\in(0,|\Gamma|]$
and
\[
\widetilde{w}(\tau):=\psi_t(|\tau-t|)w(\tau),
\quad\tau\in\Gamma.
\]
Clearly, the weight $\widetilde{w}$ is of the form \eqref{eq:weight}.

If $t\in\Gamma\setminus\{t_1,\dots,t_n\}$, then obviously
%%%
\begin{equation}\label{eq:Fredholm-4}
m(\psi_t)=M(\psi_t)=s_t-\operatorname{Re}\gamma_t-\delta(t)\operatorname{Im}\gamma_t.
\end{equation}
%%%
From \eqref{eq:Fredholm-2}--\eqref{eq:Fredholm-4} we get
\[
0<1/p(t)+m(\psi_t)\le 1/p(t)+M(\psi_t)<1.
\]
Combining these inequalities with Theorem~\ref{th:KSS}, we conclude that
the operator $S$ is bounded on $L^{p(\cdot)}(\Gamma,\widetilde{w})$.

If $t=t_k$ for some $k\in\{1,\dots,n\}$, then one can easily show that
%%%
\begin{eqnarray}
m(\psi_{t_k}w_k) &=&
s_{t_k}-\operatorname{Re}
\gamma_{t_k}-\delta(t_k)\operatorname{Im}\gamma_{t_k}+m(w_k),
\label{eq:Fredholm-5}
\\
M(\psi_{t_k}w_k) &=&
s_{t_k}-\operatorname{Re}
\gamma_{t_k}-\delta(t_k)\operatorname{Im}\gamma_{t_k}+M(w_k).
\label{eq:Fredholm-6}
\end{eqnarray}
%%%
From \eqref{eq:Fredholm-2}--\eqref{eq:Fredholm-3} and
\eqref{eq:Fredholm-5}--\eqref{eq:Fredholm-6} we obtain
\[
0<1/p(t_k)+m(\psi_{t_k}w_k)\le 1/p(t_k)+M(\psi_{t_k}w_k)<1.
\]
These inequalities and Theorem~\ref{th:KSS} yield the boundedness of the
operator $S$ on $L^{p(\cdot)}(\Gamma,\widetilde{w})$.

In view of the logarithmic whirl condition \eqref{eq:spiralic} we have
%%%
\begin{eqnarray*}
\varphi_{t,s_t-\gamma_t}(\tau)
&=&
|\tau-t|^{s_t-\operatorname{Re}\gamma_t}
e^{\operatorname{Im}\gamma_t\arg(\tau-t)}
\\
&=&
|\tau-t|^{s_t-\operatorname{Re}\gamma_t}
e^{-\operatorname{Im}\gamma_t(\delta(t)\log|\tau-t|+O(1))}
\\
&=&
|\tau-t|^{s_t-\operatorname{Re}\gamma_t-\delta(t)\operatorname{Im}\gamma_t}
e^{-\operatorname{Im}\gamma_t O(1)}
\end{eqnarray*}
%%%
as $\tau\to t$. Therefore the operator
$\varphi_{t,s_t-\gamma_t}S\varphi_{t,\gamma_t-s_t}I$
is bounded on $L^{p(\cdot)}(\Gamma,w)$. Then, by Lemma~\ref{le:fact-sufficiency},
the function $g_{t,\gamma_t}$ admits a Wiener-Hopf factorization on
$L^{p(\cdot)}(\Gamma,w)$.
Due to Theorem~\ref{th:factorization}, the operator $g_{t,\gamma_t}P+Q$
is Fredholm. Then the operator $c_tg_{t,\gamma_t}P+Q$ is Fredholm, too.
Since the function $c_tg_{t,\gamma_t}$ is locally equivalent to the function
$a$ at every point $t\in\Gamma$, the operator $aP+Q$ is Fredholm on
$L^{p(\cdot)}(\Gamma,w)$ in view of Theorem~\ref{th:local_principle}.
\end{proof}
%%%%%%%%%%%%%%%%%%%%%%%%%%%%%%%%%%%%%%%%%%%%%%%%%%%%%%%%%%%%%%%%%%%%%%%%%%%%
\begin{remark}
In the case of standard Lebesgue spaces a complete description of the set
of all $\gamma\in\C$ such that the operator $S$ is bounded on
$L^p(\Gamma,\varphi_{t,\gamma}w)$ is known in terms of the indicator
functions $\alpha_t$ and $\beta_t$ (see \cite[Sections~3.6--3.7]{BK97}).
This description allowed the authors of \cite{BK97} to consider the case
of Carleson curves that may not satisfy \eqref{eq:spiralic} and may have
distinct spirality indices $\alpha(W_t\eta_t)$ and $\beta(W_t\eta_t)$.
However, the only result on the boundedness of $S$ on weighted variable
Lebesgue spaces in our disposal is Theorem~\ref{th:KSS}, which is not
applicable to weights of the form $\varphi_{t,\gamma}w$ unless a Carleson
curve is sufficiently nice. If a Carleson curve satisfies \eqref{eq:spiralic},
then $\varphi_{t,\gamma}$ is equivalent to $\psi_{t,\lambda}(\tau):=|\tau-t|^\lambda$
for some $\lambda\in\R$ and one can apply Theorem~\ref{th:KSS} to the weight
$\psi_{t,\lambda}w$. Therefore, to treat the case of arbitrary Carleson curves
by this method, a more general boundedness result than Theorem~\ref{th:KSS}
is needed.
\end{remark}
%%%%%%%%%%%%%%%%%%%%%%%%%%%%%%%%%%%%%%%%%%%%%%%%%%%%%%%%%%%%%%%%%%%%%%%%%%%%
\subsection{Criterion for the closedness of the image}
\begin{theorem}\label{th:criterion-closedness}
Let $\Gamma$ be a Carleson Jordan curve satisfying the logarithmic whirl
condition \eqref{eq:spiralic} at each point $t\in\Gamma$ and
$p:\Gamma\to(1,\infty)$ be a continuous function satisfying
\eqref{eq:Dini-Lipschitz}. Suppose a weight $w$ is given by
\eqref{eq:weight}, where for $w_1,\dots,w_n\in\W$ the condition
\eqref{eq:KSS-condition} is fulfilled. Suppose $a\in PC(\Gamma)$ has
finitely many jumps and $a(t\pm 0)\ne 0$ for all $t\in\Gamma$. Then
the image of $aP+Q$ is closed in $L^{p(\cdot)}(\Gamma,w)$ if and only
if \eqref{eq:Fredholm-1} holds for all $t\in\Gamma$.
\end{theorem}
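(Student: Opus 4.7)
The sufficiency is immediate: if \eqref{eq:Fredholm-1} holds for every $t\in\Gamma$ and every $\theta\in[0,1]$, then by Theorem~\ref{th:Fredholm} the operator $aP+Q$ is Fredholm on $L^{p(\cdot)}(\Gamma,w)$, and in particular its image is closed.

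For the necessity, I plan to argue by contrapositive. Assume that \eqref{eq:Fredholm-1} fails at some $t_0\in\Gamma$ for some $\theta_0\in[0,1]$. My goal is to produce a sequence $(f_n)\subset L^{p(\cdot)}(\Gamma,w)$ with $\|f_n\|_{p(\cdot),w}=1$, $\|(aP+Q)f_n\|_{p(\cdot),w}\to 0$, and $\mathrm{dist}(f_n,\ker(aP+Q))\ge c_0>0$. By the standard characterization of closed range --- namely, that there must exist $c>0$ such that $\|Tf\|\ge c\,\mathrm{dist}(f,\ker T)$ for all $f$ --- such a sequence contradicts the assumed closedness of $\im(aP+Q)$.

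To construct $(f_n)$, I first reduce to the local representative. Since $a(t_0\pm 0)\ne 0$, the function $a$ is locally equivalent at $t_0$ to $c_{t_0}g_{t_0,\gamma_{t_0}}$ with $\gamma_{t_0}$ as in \eqref{eq:local-representative}. Following the sufficiency scheme of Theorem~\ref{th:Fredholm}, introduce
\[
\widetilde{w}(\tau):=|\tau-t_0|^{s_{t_0}-\operatorname{Re}\gamma_{t_0}-\delta(t_0)\operatorname{Im}\gamma_{t_0}}w(\tau)
\]
for a suitable $s_{t_0}\in\Z$. Via Lemma~\ref{le:indicator-calculation} and Theorem~\ref{th:radial-MO}, the failure of \eqref{eq:Fredholm-1} at $(t_0,\theta_0)$ translates into the failure of a strict inequality in \eqref{eq:KSS-condition} for $\widetilde{w}$: one of the relevant Matuszewska-Orlicz indices lies exactly on the boundary of the admissible range. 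Exploiting the $\limsup$ definition of $\Phi_\varrho^0$ in \eqref{eq:MO-definition}, I extract a sequence of radii $R_n\to 0$ realizing the borderline growth rate, and combine these with cutoff functions supported in $\Gamma(t_0,R_n)$ and the spirality factor prescribed by \eqref{eq:spiralic} to build $f_n$ on which $(aP+Q)f_n\to 0$ in the weighted norm, after pulling back through the local equivalence $a\sim c_{t_0}g_{t_0,\gamma_{t_0}}$.

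The main obstacle is establishing the uniform lower bound $\mathrm{dist}(f_n,\ker(aP+Q))\ge c_0$. This is where the finite-jump hypothesis is essential: by Theorem~\ref{th:local_principle} (Simonenko's local principle), at every $t\in\Gamma$ where $a$ is continuous the operator $aP+Q$ is locally Fredholm (in fact locally invertible modulo compacts, since $a(t)\ne 0$), so any element of $\ker(aP+Q)$ is ``localized'' at the finitely many jump points of $a$. A Riesz-lemma-type subtraction argument against the finite-dimensional contribution from jump points other than $t_0$ then allows $(f_n)$ to be replaced by a sequence that preserves unit norm while staying uniformly away from $\ker(aP+Q)$, delivering the required lower bound and completing the contradiction.
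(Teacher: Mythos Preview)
Your sufficiency argument is fine and matches the paper. The necessity argument, however, has two genuine gaps and misses the idea that makes the paper's proof work.

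First, the construction of the approximate null sequence $(f_n)$ is not actually carried out. Saying that the failure of \eqref{eq:Fredholm-1} puts an index ``on the boundary'' and that one can ``extract radii realizing the borderline growth rate'' and combine them with cutoffs is a description of a hope, not a proof. Producing explicit functions on which $aP+Q$ is small in a weighted variable Lebesgue norm over a Carleson curve is a substantial task; nothing in the paper (Lemmas~\ref{le:indicator-calculation}, \ref{le:fact-sufficiency}, Theorem~\ref{th:radial-MO}) gives you such functions for free.

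Second, and more seriously, the lower-bound step is based on a false premise. Local Fredholmness of $aP+Q$ at continuity points of $a$ (via Theorem~\ref{th:local_principle}) does \emph{not} imply that elements of $\ker(aP+Q)$ are ``localized at the jump points''. Kernel elements are global objects; the local principle controls invertibility of cosets in local quotient algebras, not the support or concentration of null functions. Consequently there is no ``finite-dimensional contribution from jump points other than $t_0$'' to subtract, and the Riesz-lemma argument has nothing to act on. In fact, without an independent argument you do not even know that $\ker(aP+Q)$ is finite-dimensional, so the whole scheme is circular.

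The paper avoids both difficulties by a completely different route. It never builds a singular sequence. Instead it uses the finite-jump hypothesis to choose $\eps>0$ small enough that both $aP+Q$ and $a^{-1}P+Q$ are Fredholm on the ordinary space $L^{1+\eps}(\Gamma)$ (only finitely many non-integrality conditions to satisfy, so one can dodge them all simultaneously). A separate perturbation of \eqref{eq:KSS-condition} gives continuous embeddings $L^{p(\cdot)}(\Gamma,w)\subset L^{1+\eps_0}(\Gamma)$ and $L^{q(\cdot)}(\Gamma,1/w)\subset L^{1+\eps_0}(\Gamma)$. These embeddings bound $n(aP+Q;L^{p(\cdot)}(\Gamma,w))$ and, by duality, $d(aP+Q;L^{p(\cdot)}(\Gamma,w))$ by the corresponding (finite) defect numbers on $L^{1+\eps_0}(\Gamma)$. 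Now if \eqref{eq:Fredholm-1} fails, Theorem~\ref{th:Fredholm} says $aP+Q$ is not Fredholm on $L^{p(\cdot)}(\Gamma,w)$; since both defect numbers are finite, the only way this can happen is that the image is not closed. That is the whole argument---no explicit sequences, no localization of kernels.
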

%%%%%%%%%%%%%%%%%%%%%%%%%%%%%%%%%%%%%%%%%%%%%%%%%%%%%%%%%%%%%%%%%%%%%%%%%%%%
\begin{proof}
The idea of the proof is borrowed from \cite[Proposition~7.16]{BK97}
(see also \cite[Theorem~4.3]{Karlovich07}).
The sufficiency part follows from Theorem~\ref{th:Fredholm}.

Let us prove the necessity part. Assume that $a(t\pm 0)\ne 0$ for all
$t\in\Gamma$. Since the number of jumps, that is, the points $t\in\Gamma$ at
which $a(t-0)\ne a(t+0)$, is finite, it is clear that
%%%
\[
\begin{split}
&
-\frac{1}{2\pi}\arg\frac{a(t-0)}{a(t+0)}
+\frac{\delta(t)}{2\pi}\log\left|\frac{a(t-0)}{a(t+0)}\right|
+\frac{1}{1+\eps}\notin\Z,
\\[3mm]
&
-\frac{1}{2\pi}\arg\frac{a(t+0)}{a(t-0)}
+\frac{\delta(t)}{2\pi}\log\left|\frac{a(t+0)}{a(t-0)}\right|
+\frac{1}{1+\eps}\notin\Z
\end{split}
\]
for all $t\in\Gamma$ and all sufficiently small $\eps>0$. By
Theorem~\ref{th:Fredholm}, the operators $aP+Q$
and $a^{-1}P+Q$ are Fredholm on the Lebesgue space $L^{1+\eps}(\Gamma)$
whenever $\eps>0$ is sufficiently small.

If (\ref{eq:KSS-condition}) holds, then there exists a number $\eps>0$ such that
\[
0<(1/p(t_k)+m(w_k))(1+\eps)\le (1/p(t_k)+M(w_k))(1+\eps)<1
\]
for all $k\in\{1,\dots,n\}$. It is easy to see that
\[
m(w_k^{1+\eps})=(1+\eps)m(w_k),
\quad
M(w_k^{1+\eps})=(1+\eps)M(w_k).
\]
Hence, by Theorem~\ref{th:KSS}, the operator
$S$ is bounded on $L^{p(\cdot)/(1+\eps)}(\Gamma,w^{1+\eps})$.
Taking into account this observation, one can prove as in
\cite[Lemma~4.1]{Karlovich07} that there is an $\eps_0>0$ such that
\[
L^{p(\cdot)}(\Gamma,w)\subset L^{1+\eps_0}(\Gamma),
\quad
L^{q(\cdot)}(\Gamma,1/w)\subset L^{1+\eps_0}(\Gamma)
\]
and $aP+Q$, $a^{-1}P+Q$ are Fredholm on $L^{1+\eps_0}(\Gamma)$. Then
%%%
\begin{equation}\label{eq:criterion-closedness-1}
n\big(aP+Q;L^{p(\cdot)}(\Gamma,w)\big)
\le
n\big(aP+Q;L^{1+\eps_0}(\Gamma)\big)<\infty,
\end{equation}
%%%
and, by duality (see \cite[Lemma~3.8]{Karlovich07}),
%%%
\begin{equation}\label{eq:criterion-closedness-2}
\begin{split}
d\big(aP+Q;L^{p(\cdot)}(\Gamma,w)\big)
&=
n\big(a^{-1}P+Q;L^{q(\cdot)}(\Gamma,1/w)\big)
\\
&\le
n\big(a^{-1}P+Q;L^{1+\eps_0}(\Gamma)\big)<\infty.
\end{split}
\end{equation}
%%%
If (\ref{eq:Fredholm-1}) does not hold, then $aP+Q$ is not Fredholm on
$L^{p(\cdot)}(\Gamma,w)$ in view of Theorem~\ref{th:Fredholm}.
From this fact and (\ref{eq:criterion-closedness-1})--(\ref{eq:criterion-closedness-2})
we conclude that the image of $aP+Q$ is not closed in $L^{p(\cdot)}(\Gamma,w)$,
which contradicts the hypothesis.
\end{proof}
%%%%%%%%%%%%%%%%%%%%%%%%%%%%%%%%%%%%%%%%%%%%%%%%%%%%%%%%%%%%%%%%%%%%%%%%%%%%
\section{Tools for the construction of the symbol calculus}
\label{sect:5}
\subsection{The Allan-Douglas local principle}
%%%%%%%%%%%%%%%%%%%%%%%%%%%%%%%%%%%%%%%%%%%%%%%%%%%%%%%%%%%%%%%%%%%%%%%%%%%%
Let $B$ be a Banach algebra with identity. A subalgebra $Z$ of $B$ is said to
be a \textit{central subalgebra} if $zb=bz$ for all $z\in Z$ and all $b\in B$.
%%%%%%%%%%%%%%%%%%%%%%%%%%%%%%%%%%%%%%%%%%%%%%%%%%%%%%%%%%%%%%%%%%%%%%%%%%%%
\begin{theorem}[see \cite{BS06}, Theorem~1.35(a)]
\label{th:AllanDouglas}
Let $B$ be a Banach algebra with identity $e$ and let $Z$ be a closed central
subalgebra of $B$ containing $e$. Let $M(Z)$ be the maximal ideal space of $Z$,
and for $\omega\in M(Z)$, let $J_\omega$ refer to the smallest closed two-sided
ideal of $B$ containing the ideal $\omega$. Then an element $b$ is invertible
in $B$ if and only if $b+J_\omega$ is invertible in the quotient algebra
$B/J_\omega$ for all $\omega\in M(Z)$.
\end{theorem}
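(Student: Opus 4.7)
The plan is to prove the two implications separately. The forward direction is immediate: if $b$ has inverse $b^{-1}\in B$, then the canonical quotient map $\pi_\omega\colon B\to B/J_\omega$ is a unital homomorphism, so $\pi_\omega(b)$ is invertible in $B/J_\omega$ with inverse $\pi_\omega(b^{-1})$ for every $\omega\in M(Z)$.

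For the reverse direction, suppose $b+J_\omega$ is invertible in $B/J_\omega$ for every $\omega\in M(Z)$. For each $\omega$ I would pick a local parametrix $c_\omega\in B$ together with $\ell_\omega,r_\omega\in J_\omega$ satisfying $c_\omega b=e+\ell_\omega$ and $bc_\omega=e+r_\omega$. By the very definition of $J_\omega$ as the smallest closed two-sided ideal of $B$ containing the maximal ideal $\omega\subset Z$, the elements $\ell_\omega$ and $r_\omega$ are norm limits of finite sums $\sum_k u_k z_k v_k$ with $u_k,v_k\in B$ and $z_k\in\omega$. Approximating each defect within $1/2$ in norm by such a finite sum and absorbing the small remainder by a Neumann series, I would replace $c_\omega$ by a modified local parametrix whose defects from being a genuine inverse of $b$ are \emph{explicit} finite expressions with middle factors drawn from $\omega$.

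The next step is globalisation via the Gelfand representation. Each $z_k\in\omega$ corresponds to a continuous function $\widehat{z_k}$ on $M(Z)$ vanishing at $\omega$, hence uniformly small on an open Gelfand neighbourhood $U_\omega$ of $\omega$. Consequently, the modified $c_\omega$ serves as an approximate two-sided inverse of $b$ modulo $J_{\omega'}$ for all $\omega'\in U_\omega$. Using that $M(Z)$ is compact in the Gelfand topology, I would extract a finite subcover $U_{\omega_1},\dots,U_{\omega_N}$ and paste the local parametrices $c_{\omega_i}$ together, with weights drawn from $Z$ itself, into a single $c\in B$ for which $cb-e$ and $bc-e$ are of norm less than $1$; one further Neumann series then corrects $c$ into an honest two-sided inverse of $b$.

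The main obstacle is this final pasting step, because a general commutative Banach subalgebra $Z$ does not admit honest partitions of unity subordinate to an arbitrary open cover of $M(Z)$. The standard substitute, which I would import from the proof of \cite[Theorem~1.35(a)]{BS06}, replaces a functional-calculus partition of unity by a purely algebraic telescoping construction that exploits the openness in $M(Z)$ of the set of points at which $b+J_\omega$ is invertible, together with the centrality of $Z$ in $B$; this openness allows the finitely many local inverses to be combined through a convex-combination argument carried out entirely inside $Z$, avoiding the need for a smooth functional calculus.
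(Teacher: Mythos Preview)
The paper does not supply a proof of this theorem at all: it is quoted verbatim as a known result with the attribution ``see \cite{BS06}, Theorem~1.35(a)'' and is used as a black box in Sections~\ref{sect:5}--\ref{sect:6}. Consequently there is no argument in the paper to compare your sketch against.

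As for the sketch itself, the forward direction is correct and trivial. Your outline of the reverse direction follows the standard Allan--Douglas strategy (local parametrices, approximation of the defects by finite sums with central factors in $\omega$, openness of local invertibility on $M(Z)$, compactness, gluing), and you rightly identify the only genuinely delicate point: a commutative Banach algebra $Z$ need not admit partitions of unity subordinate to an open cover of $M(Z)$, so the naive convex-combination idea does not go through. However, your final paragraph does not actually resolve this obstacle; it merely asserts that \cite{BS06} contains ``a purely algebraic telescoping construction'' that you would import. That is not a proof but a pointer to one, so as written your proposal is a plan rather than a demonstration. If the goal is to match the paper's treatment, you have in fact done more than the paper does---it cites the result without any sketch---but if the goal is a self-contained argument, the gluing step remains a gap that must be filled either by the Shilov idempotent theorem or by the iterative construction found in Allan's original paper or in \cite[Theorem~1.35(a)]{BS06}.
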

%%%%%%%%%%%%%%%%%%%%%%%%%%%%%%%%%%%%%%%%%%%%%%%%%%%%%%%%%%%%%%%%%%%%%%%%%%%%
The algebra $B/J_\omega$ is referred to as the \textit{local algebra} of $B$
at $\omega\in M(Z)$ and the spectrum of $b+J_\omega$
in $B/J_\omega$ is called the \textit{local spectrum} of $b$ at $\omega\in M(Z)$.
%%%%%%%%%%%%%%%%%%%%%%%%%%%%%%%%%%%%%%%%%%%%%%%%%%%%%%%%%%%%%%%%%%%%%%%%%%%%
\subsection{The two projections theorem}
Recall that an element $p$ of a Banach algebra is called an \textit{idempotent}
(or, somewhat loosely, also a \textit{projection}), if $p^2=p$.

The following two projections theorem was obtained by Finck,
Roch, Silbermann \cite{FRS93} and Gohberg, Krupnik \cite{GK93}
(see also \cite[Section~8.3]{BK97}).
%%%%%%%%%%%%%%%%%%%%%%%%%%%%%%%%%%%%%%%%%%%%%%%%%%%%%%%%%%%%%%%%%%%%%%%%%%%%
\begin{theorem}\label{th:2proj}
Let $B$ be a Banach algebra with identity $e$, let $\cC$ be a Banach subalgebra
of $B$ which contains $e$ and is isomorphic to ${\mathbb{C}}^{N \times N}$,
and let $p$ and $q$ be two idempotent elements in $B$ such that
$cp=pc$ and $cq=qc$ for all $c \in \cC$. Let $A=\alg(\cC,p,q)$
be the smallest closed subalgebra of $B$ containing $\cC,p,q$. Put
\[
x=pqp+(e-p)(e-q)(e-p),
\]
denote by $\mathrm{sp}\,x$ the spectrum of $x$ in $B$, and suppose the points
$0$ and $1$ are not isolated points of $\mathrm{sp}\,x$. Then
%%%
\begin{enumerate}
\item[{\rm (a)}]
for each $z \in \mathrm{sp}\,x$ the map $\sigma_{z}$ of $\cC \cup \{p,q\}$
into the algebra ${\mathbb{C}}^{2N\times 2N}$ of all complex $2N\times 2N$
matrices defined by
\begin{eqnarray*}
&&
\sigma_{z}c=\left[
\begin{array}{cc}
c & O\\
O & c
\end{array}
\right],
\sigma_{z}p=\left[
\begin{array}{cc}
E & O\\
O & O
\end{array}
\right],
\sigma_{z}q=\left[
\begin{array}{cc}
z E & \sqrt{z(1-z)}E \\
\sqrt{z(1-z)}E  & (1-z)E
\end{array}
\right],
\end{eqnarray*}
%%%
where $c\in \cC$, $E$ and $O$ denote the $N \times N$ identity and zero matrices,
respectively, and $\sqrt{z(1-z)}$ denotes any complex number whose square is $z(1-z)$,
extends to a Banach algebra homomorphism
\[
\sigma_{z}: A \to {\mathbb{C}}^{2N \times 2N};
\]

\item[{\rm (b)}]
every element $a$ of the algebra $A$ is invertible in the algebra $B$ if
and only if
\[
\det \sigma_{z} a \neq 0 \quad\mbox{for all}\quad z \in \mathrm{sp}\,x;
\]

\item[{\rm (c)}]
the algebra $A$ is inverse closed in $B$ if and only if the spectrum of
$x$ in $A$ coincides with the spectrum of $x$ in $B$.
\end{enumerate}
\end{theorem}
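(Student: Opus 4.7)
The plan is to reduce the statement to the Allan--Douglas local principle (Theorem~\ref{th:AllanDouglas}) applied with the closed subalgebra $Z\subset A$ generated by $e$ and $x$. The first task is to verify that $x$ lies in the center of $A$: the identities $xp=px=pqp$ and $x(e-p)=(e-p)x=(e-p)(e-q)(e-p)$ follow immediately from $p(e-p)=0$, a short direct computation using only $p^2=p$ and $q^2=q$ gives $xq=qpq=qx$, and $x$ commutes with every $c\in\cC$ because both $p$ and $q$ do. Consequently $Z$ is a closed central subalgebra of $A$, and Allan--Douglas produces, for each character $\omega_z\colon x\mapsto z$ of $Z$ parametrized by $z\in\mathrm{sp}_A\,x$, a local algebra $A/J_z$ in which invertibility in $A$ can be tested pointwise.

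The second step is to analyze each local algebra $A_z:=A/J_z$, in which $x$ reduces to the scalar $ze$. Multiplying the identity $x=pqp+(e-p)(e-q)(e-p)$ from the left and right by $p$ and by $e-p$ separately yields the local relations $pqp\equiv zp$ and $(e-p)(e-q)(e-p)\equiv z(e-p)$, and then a straightforward manipulation shows that the ``off-diagonal'' elements $u:=pq(e-p)$ and $v:=(e-p)qp$ satisfy $uv\equiv z(1-z)p$ and $vu\equiv z(1-z)(e-p)$ in $A_z$. Packaging $\{p,\,e-p,\,u,\,v\}$ as the block structure of $2\times 2$ matrices over $\cC\cong\C^{N\times N}$, with the off-diagonal entries rescaled by $\sqrt{z(1-z)}$ so that they become genuine matrix units, leads to exactly the formula for $\sigma_z q$ in the statement, while $\sigma_z p$ and $\sigma_z c$ are forced by the block decomposition with respect to $p$.

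The main obstacle will be showing that $\sigma_z$ is a \emph{well-defined} homomorphism on all of $A$, not merely on the generating set $\cC\cup\{p,q\}$. Concretely, I need to establish that $\{p,\,e-p,\,u,\,v\}$ spans $A_z$ as a left $\cC$-module and that no unexpected $\cC$-linear relation arises among these four elements beyond those already visible in the $2\times 2$ matrix model. This is precisely where the hypothesis that $0$ and $1$ are not isolated in $\mathrm{sp}\,x$ enters: on a non-discrete portion of the spectrum the continuous functional calculus for $x$ provides enough separating characters of $Z$ to rule out any degeneration, whereas at $z\in\{0,1\}$ the factor $\sqrt{z(1-z)}$ collapses and the matrix picture by itself would be insufficient to recover the structure. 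Once $\sigma_z$ has been constructed, part~(b) follows from Allan--Douglas together with the fact that a matrix in $\C^{2N\times 2N}$ is invertible iff its determinant is nonzero, and part~(c) is a consequence of the general principle that a closed subalgebra is inverse closed in an ambient Banach algebra iff the spectra of its elements agree on both sides; since $A$ is generated by $\cC$, $p$, $q$, and hence by $\cC$, $p$, $x$, the whole of this inverse-closedness question is controlled by a single element, namely $x$.
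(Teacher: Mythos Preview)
The paper does not prove this theorem; it is quoted from \cite{FRS93} and \cite{GK93} (see also \cite[Section~8.3]{BK97}), so there is no in-paper argument to compare against. Your outline is in fact the standard route taken in those references: verify that $x$ is central in $A$, apply the Allan--Douglas local principle to $A$ with the closed commutative subalgebra $Z$ generated by $e$ and $x$, and identify each local quotient with $\C^{2N\times 2N}$ via the block structure built from $p$, $e-p$, $pq(e-p)$, $(e-p)qp$.

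There is, however, a genuine gap in your derivation of part~(b). Allan--Douglas applied \emph{inside $A$} characterizes invertibility in $A$, with local quotients indexed by $\mathrm{sp}_A x$. The assertion of~(b) concerns invertibility in $B$, indexed by $\mathrm{sp}\,x=\mathrm{sp}_B x\subset\mathrm{sp}_A x$, and part~(c) tells you that these two spectra need not coincide. Hence your sentence ``part~(b) follows from Allan--Douglas together with the fact that a matrix in $\C^{2N\times 2N}$ is invertible iff its determinant is nonzero'' only yields the weaker statement that $a$ is invertible in $A$ iff $\det\sigma_z a\neq 0$ for all $z\in\mathrm{sp}_A x$. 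Passing from this to~(b) as stated requires an additional argument, present in the cited proofs, typically based on the observation that $z\mapsto\det\sigma_z a$ is (independently of the choice of square root) a polynomial in $z$, so that its zero set can be compared against the inclusion of the topological boundary of $\mathrm{sp}_A x$ in $\mathrm{sp}_B x$. Your sketch of~(c) inherits the same difficulty: the ``general principle'' you invoke is that inverse closedness is equivalent to equality of spectra for \emph{every} element of $A$, and the reduction of this to the single element $x$ is precisely what has to be proved, using~(a) together with the corrected form of~(b).
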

%%%%%%%%%%%%%%%%%%%%%%%%%%%%%%%%%%%%%%%%%%%%%%%%%%%%%%%%%%%%%%%%%%%%%%%%%%%
A further generalization of the above result to the case of $n\ge 3$ projections
is contained in \cite[Section~8.4]{BK97}.
%%%%%%%%%%%%%%%%%%%%%%%%%%%%%%%%%%%%%%%%%%%%%%%%%%%%%%%%%%%%%%%%%%%%%%%%%%%
\section{Algebra of singular integral operators}
\label{sect:6}
%%%%%%%%%%%%%%%%%%%%%%%%%%%%%%%%%%%%%%%%%%%%%%%%%%%%%%%%%%%%%%%%%%%%%%%%%%%
\subsection{Operators of local type}
In this section we will suppose that $\Gamma$ is a Carleson curve satisfying
the logarithmic whirl condition \eqref{eq:spiralic} at each point $t\in\Gamma$,
$p:\Gamma\to(1,\infty)$ is a continuous function satisfying \eqref{eq:Dini-Lipschitz},
$w$ is a weight of the form \eqref{eq:weight} with $w_1,\dots,w_n\in\W$ satisfying
\eqref{eq:KSS-condition}. Under these conditions the operator $S$ (defined
elementwise) is bounded on $L_N^{p(\cdot)}(\Gamma,w)$, where $N\ge 1$ (see
Theorem~\ref{th:KSS}).

Let $\cB:=\cB(L_N^{p(\cdot)}(\Gamma,w))$  be the Banach algebra of all bounded
linear operators on $L_N^{p(\cdot)}(\Gamma,w)$ and let $\cK:=\cK(L_N^{p(\cdot)}(\Gamma,w))$
be the closed ideal of all compact operators on $L_N^{p(\cdot)}(\Gamma,w)$.
We will denote by $\cB^\pi$ the Calkin algebra $\cB/\cK$ and by $A^\pi$
the coset $A+\cK$ for any operator $A\in\cB$. An operator $A\in\cB$ is said
to be of \textit{local type} if $A\diag\{c,\dots,c\}I-\diag\{c,\dots,c\}A\in\cK$
for every continuous function $c$ on $\Gamma$. This notion goes back to
Simonenko \cite{Simonenko65,SM86}. It is easy to see that the set $\cL$ of all
operators of local type is a Banach subalgebra of $\cB$.
%%%%%%%%%%%%%%%%%%%%%%%%%%%%%%%%%%%%%%%%%%%%%%%%%%%%%%%%%%%%%%%%%%%%%%%%%%%
\begin{lemma}\label{le:OLT-embedding}
We have
%%%
\begin{equation}\label{eq:OLT-embedding-1}
\cK\subset\alg(S,PC,L_N^{p(\cdot)}(\Gamma,w))\subset\cL.
\end{equation}
\end{lemma}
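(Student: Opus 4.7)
The plan is to verify the two inclusions separately: the right-hand inclusion is essentially a bookkeeping argument about closed subalgebras, while the left-hand inclusion requires producing genuine compact operators inside the algebra.

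For $\alg(S,PC,L_N^{p(\cdot)}(\Gamma,w))\subset\cL$, I would first observe that $\cL$ is a closed subalgebra of $\cB$. This is routine: if $A_n\to A$ in norm and each $A_n\in\cL$, then for every $c\in C(\Gamma)$ the operators $A_n\diag\{c,\dots,c\}I-\diag\{c,\dots,c\}A_n$ converge in norm to $A\diag\{c,\dots,c\}I-\diag\{c,\dots,c\}A$, and the norm limit of compact operators is compact; closure of $\cL$ under products follows from $\cK$ being a two-sided ideal. Hence it suffices to show the generators of the algebra lie in $\cL$. For multiplication $aI$ with $a\in PC_{N\times N}(\Gamma)$, the commutator with $\diag\{c,\dots,c\}I$ is identically zero, because a scalar-valued function $c$ commutes pointwise with any $N\times N$ matrix function $a$. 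For $S$, which acts elementwise, the commutator $S\diag\{c,\dots,c\}I-\diag\{c,\dots,c\}S$ equals $\diag\{[S,cI],\dots,[S,cI]\}$, so its compactness reduces to the scalar commutator $[S,cI]$ being compact on $L^{p(\cdot)}(\Gamma,w)$ for every $c\in C(\Gamma)$. This latter fact is established in the variable-exponent setting in earlier papers of the series (e.g.\ \cite{Karlovich03}) under precisely the hypotheses on $(\Gamma,p,w)$ adopted here.

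For $\cK\subset\alg(S,PC,L_N^{p(\cdot)}(\Gamma,w))$, the strategy is to exhibit a nontrivial compact operator inside the algebra and then generate the whole ideal by closure. The natural candidate is a commutator $[S,\chi_I I]=S\chi_I I-\chi_I S$ for the indicator $\chi_I$ of a subarc $I\subset\Gamma$: this operator is integration against the kernel $(\chi_I(\tau)-\chi_I(t))/(\pi i(\tau-t))$, which is supported away from the diagonal except at the two endpoints of $I$ and is integrable there, giving a compact (in fact weakly singular) operator. Taking suitable products $aPbQ$ and $aQbP$ for $a,b\in PC_{N\times N}(\Gamma)$, together with linear combinations and norm closure, one obtains every finite-rank operator on $L_N^{p(\cdot)}(\Gamma,w)$, and since this space is reflexive, finite-rank operators are norm-dense in $\cK$. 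Alternatively, one may simply invoke the corresponding result from \cite{Karlovich03} or \cite{Karlovich05}, where the same inclusion is established in the scalar case and carries over to the matrix case by passing componentwise.

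The principal obstacle is the compactness of $[S,cI]$ for $c\in C(\Gamma)$ in the weighted variable Lebesgue space setting; this is nontrivial and relies on the full strength of the Carleson curve, Dini-Lipschitz, and $A_{p(\cdot)}$ hypotheses built up in Sections \ref{sect:2}--\ref{sect:3}. Once this compactness is cited from the earlier papers, both inclusions follow immediately from the closure arguments sketched above, and one expects the proof in the paper to be no more than a few lines of cross-references.
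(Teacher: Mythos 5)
Your argument for the inclusion $\alg(S,PC,L_N^{p(\cdot)}(\Gamma,w))\subset\cL$ is fine and is essentially the paper's: $aI$ commutes exactly with $\diag\{c,\dots,c\}I$, and the local-type property of $S$ reduces, elementwise, to the compactness of the scalar commutator $S cI-cS$ for $c\in C(\Gamma)$, which is \cite[Lemma~6.5]{Karlovich03}.

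The inclusion $\cK\subset\alg(S,PC,L_N^{p(\cdot)}(\Gamma,w))$ is where your explicit construction breaks. The commutator $[S,\chi_I I]$ with $\chi_I$ the indicator of a subarc is \emph{not} compact: its kernel $(\chi_I(\tau)-\chi_I(t))/(\pi i(\tau-t))$ is not weakly singular at the endpoints of $I$; near an endpoint it is a genuine Cauchy-type singularity supported on a ``quadrant''. In the model case of the half-line indicator on $\R$, the nontrivial block of this operator has kernel $1/(\pi i(x+u))$ on the half-line (the Hilbert--Hankel operator), which is bounded but not compact. Equivalently, $[S,\chi_I I]=2(P\chi_I Q-Q\chi_I P)$, and Hankel-type operators with jump symbols are never compact; if they were, multiplication by any $PC$ function would commute with $S$ modulo $\cK$ and the massive local spectra of Theorem~\ref{th:main} could not occur. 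Compactness of the commutator genuinely requires continuity of the multiplier --- which is exactly why the paper proves the first inclusion through the smaller algebra $\alg(S,C,L_N^{p(\cdot)}(\Gamma,w))$ generated by $S$ and \emph{continuous} matrix coefficients, showing $\cK\subset\alg(S,C,L_N^{p(\cdot)}(\Gamma,w))$ by analogy with \cite[Lemma~9.1]{Karlovich96} and \cite[Lemma~5.1]{Karlovich06} (there a rank-one operator with smooth kernel is manufactured from $S$ and continuous multipliers, and then a dense family of finite-rank operators is generated). A secondary flaw: your assertion that reflexivity of $L_N^{p(\cdot)}(\Gamma,w)$ gives norm-density of finite-rank operators in $\cK$ is not valid as a general principle, since reflexivity does not imply the approximation property; the density must be justified by the structure of the space (as in the cited lemmas), not by reflexivity. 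Your fallback of simply invoking the analogous inclusion from the earlier papers of the series is acceptable and is in effect what the paper does, but the mechanism you propose as the primary route (compactness of commutators with indicator functions) would fail.
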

%%%%%%%%%%%%%%%%%%%%%%%%%%%%%%%%%%%%%%%%%%%%%%%%%%%%%%%%%%%%%%%%%%%%%%%%%%%
\begin{proof}
Let $\alg(S,C,L_N^{p(\cdot)}(\Gamma,w))$ be the smallest closed subalgebra
of $\cB$ containing the operators of multiplication by continuous matrix
functions and the operator $S$. Obviously,
%%%
\begin{equation}\label{eq:OLT-embedding-2}
\alg(S,C,L_N^{p(\cdot)}(\Gamma,w))\subset\alg(S,PC,L_N^{p(\cdot)}(\Gamma,w)).
\end{equation}
%%%
On the other hand, by analogy with \cite[Lemma~9.1]{Karlovich96} or
\cite[Lemma~5.1]{Karlovich06} one can show that
%%%
\begin{equation}\label{eq:OLT-embedding-3}
\cK\subset\alg(S,C,L_N^{p(\cdot)}(\Gamma,w)).
\end{equation}
%%%
Combining \eqref{eq:OLT-embedding-2} and \eqref{eq:OLT-embedding-3},
we arrive at the first embedding in \eqref{eq:OLT-embedding-1}.

If $a\in PC_{N\times N}(\Gamma)$ and $c\in C(\Gamma)$, then the operators
$aI$ and $\diag\{c,\dots,c\}I$ obviously commute. Then $aI$ is of
local type. By \cite[Lemma~6.5]{Karlovich03}, the operator
$\diag\{c,\dots,c\}S-S\diag\{c,\dots,c\}I$ is compact. Thus, $S$
is of local type, too. Since the generators of
$\alg(S,PC,L_N^{p(\cdot)}(\Gamma,w))$ are of local type, each element
of this algebra is of local type, which proves the second embedding in
\eqref{eq:OLT-embedding-1}.
\end{proof}
%%%%%%%%%%%%%%%%%%%%%%%%%%%%%%%%%%%%%%%%%%%%%%%%%%%%%%%%%%%%%%%%%%%%%%%%%%%
\begin{lemma}\label{le:OLT-Fredholmness-invertibility}
An operator $A\in\cL$ is Fredholm if and only if the coset $A^\pi$ is invertible
in the quotient algebra $\cL^\pi:=\cL/\cK$.
\end{lemma}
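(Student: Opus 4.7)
The plan is to establish this as an inverse-closedness statement: the Banach subalgebra $\cL^\pi$ of the Calkin algebra $\cB^\pi$ is inverse-closed, so invertibility of $A^\pi$ in $\cL^\pi$ coincides with invertibility of $A^\pi$ in $\cB^\pi$, which is by definition Fredholmness of $A$.

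The ``if'' direction is immediate from the inclusion $\cL^\pi\subset\cB^\pi$: an inverse of $A^\pi$ inside the subalgebra $\cL^\pi$ is, a fortiori, an inverse inside $\cB^\pi$, i.e., a Fredholm regularizer of $A$.

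For the converse, I would take $A\in\cL$ Fredholm, pick any Fredholm regularizer $B\in\cB$ with $AB=I+K_1$, $BA=I+K_2$, $K_1,K_2\in\cK$, and show $B\in\cL$. Writing $c_N:=\diag\{c,\dots,c\}I$ for $c\in C(\Gamma)$, the calculation
\[
Bc_N-c_NB
=Bc_N(AB-K_1)-(BA-K_2)c_NB
=B(c_NA-Ac_N)B-Bc_NK_1+K_2c_NB
\]
exhibits $Bc_N-c_NB$ as a sum of compact operators: the first term is compact because $A\in\cL$ implies $c_NA-Ac_N\in\cK$, and the other two contain a compact factor. Hence $B$ commutes with every $c_NI$ modulo $\cK$, which by definition means $B\in\cL$. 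Therefore $B^\pi\in\cL^\pi$ is an inverse of $A^\pi$ in $\cL^\pi$.

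This is a standard argument and there is no genuine obstacle; one only has to keep track of the fact that in the matrix setting ``continuous coefficients'' means scalar continuous functions acting diagonally, so the definition of $\cL$ in the excerpt is exactly what makes the commutator computation above go through. The lemma $I\in\cL$ is trivial (the identity commutes with every $c_NI$), so $\cL$ is a unital Banach subalgebra of $\cB$ containing $\cK$ by Lemma~\ref{le:OLT-embedding}, which is what allows us to pass to the quotient $\cL^\pi=\cL/\cK$ and speak of invertibility there.
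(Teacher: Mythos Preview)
Your argument is correct and is precisely the standard inverse-closedness computation the paper has in mind when it says ``the proof is straightforward'' without giving details. Spelling out the commutator identity $Bc_N-c_NB=B(c_NA-Ac_N)B-Bc_NK_1+K_2c_NB$ is exactly the right way to show that any Fredholm regularizer of an operator of local type is again of local type.
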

%%%%%%%%%%%%%%%%%%%%%%%%%%%%%%%%%%%%%%%%%%%%%%%%%%%%%%%%%%%%%%%%%%%%%%%%%%%
The proof is straightforward.
%%%%%%%%%%%%%%%%%%%%%%%%%%%%%%%%%%%%%%%%%%%%%%%%%%%%%%%%%%%%%%%%%%%%%%%%%%%
\subsection{Localization}
{From} Lemma~\ref{le:OLT-embedding} we deduce that the quotient algebras
\[
\alg^\pi(S,PC,L_N^{p(\cdot)}(\Gamma,w)):=\alg(S,PC,L_N^{p(\cdot)}(\Gamma,w))/\cK
\]
and $\cL^\pi:=\cL/\cK$ are well defined. We will study the invertibility of
an element $A^\pi$ of $\alg^\pi(S,PC,L_N^{p(\cdot)}(\Gamma,w))$ in the larger
algebra $\cL^\pi$ by using Theorem~\ref{th:AllanDouglas}. To this end, consider
\[
\cZ^\pi:=\big\{(\diag\{c,\dots,c\}I)^\pi:c\in C(\Gamma)\big\}.
\]
{From} the definition of $\cL$ it follows that $\cZ^\pi$ is a central subalgebra
of $\cL^\pi$. The maximal ideal space $M(\cZ^\pi)$ of $\cZ^\pi$ may be
identified with the curve $\Gamma$ via the Gelfand map $\cG$ given by
\[
\cG:\cZ^\pi\to C(\Gamma),
\quad
\big(\cG(\diag\{c,\dots,c\}I)^\pi\big)(t)=c(t)
\quad (t\in\Gamma).
\]
In accordance with Theorem~\ref{th:AllanDouglas}, for every $t\in\Gamma$
we define $\cJ_t\subset\cL^\pi$ as the smallest closed two-sided ideal of
$\cL^\pi$ containing the set
\[
\big\{(\diag\{c,\dots,c\}I)^\pi\ :\ c\in C(\Gamma),\ c(t)=0\big\}.
\]

Consider a function $\chi_t\in PC(\Gamma)$ which is continuous
on $\Gamma\setminus\{t\}$ and satisfies $\chi_t(t-0)=0$ and $\chi_t(t+0)=1$.
For $a\in PC_{N\times N}(\Gamma)$ define $a_t\in PC_{N\times N}(\Gamma)$ by
%%%
\begin{equation}\label{eq:at}
a_t:=a(t-0)(1-\chi_t)+a(t+0)\chi_t.
\end{equation}
%%%
Clearly $(aI)^\pi-(a_tI)^\pi\in\cJ_t$. Hence, for any operator
$A\in\alg(S,PC,L_N^{p(\cdot)}(\Gamma,w))$, the coset $A^\pi+\cJ_t$ belongs
to the smallest closed subalgebra $\cA_t$ of $\cL^\pi/\cJ_t$ containing the
cosets
%%%
\begin{equation}\label{eq:definition-pq}
p:=P^\pi+\cJ_t,
\quad
q:=(\diag\{\chi_t,\dots,\chi_t\}I)^\pi+\cJ_t
\end{equation}
%%%
and the algebra
%%%
\begin{equation}\label{eq:definition-C}
\cC:=\big\{(cI)^\pi+\cJ_t\ : \ c\in\C^{N\times N}\big\}.
\end{equation}
%%%
The latter algebra is obviously isomorphic to $\C^{N\times N}$,
so $\cC$ and $\C^{N\times N}$ can (and will) be identified with each other.
It is easy to see that
%%%
\begin{equation}\label{eq:projections}
p^2=p, \quad
q^2=q, \quad
pc=cp, \quad
qc=cq
\end{equation}
%%%
for all $c\in\cC$. To apply Theorem~\ref{th:2proj} to the algebras
$\cL^\pi/\cJ_t$ and $\cA_t=\alg(\cC,p,q)$, we need to identify the spectrum
of the element
%%%
\begin{equation}\label{eq:element-x}
\begin{split}
x &:= pqp+(e-p)(e-q)(e-p)
\\
&= \big(P\diag\{\chi_t,\dots,\chi_t\}P+Q\diag\{1-\chi_t,\dots,1-\chi_t\}Q\big)^\pi+\cJ_t
\end{split}
\end{equation}
%%%
in the algebra $\cL^\pi/\cJ_t$.
%%%%%%%%%%%%%%%%%%%%%%%%%%%%%%%%%%%%%%%%%%%%%%%%%%%%%%%%%%%%%%%%%%%%%%%%%%%
\subsection{Spiralic horns}
Given two real numbers $a,b$ satisfying $0<a\le b<1$, two complex numbers $z_1,z_2$,
and a real number $\delta$, we define the \textit{spiralic horn} between
$z_1$ and $z_2$ as the set
\[
\begin{split}
&\cS(z_1,z_2;\delta;a,b):=\{z_1,z_2\}\cup
\\
&\cup\left\{u\in\C\setminus\{z_1,z_2\}:
\frac{1}{2\pi}\left(
\arg\frac{u-z_1}{u-z_2}-\delta\log\left|\frac{u-z_1}{u-z_2}\right|
\right)
\in[a,b]+\Z\right\}.
\end{split}
\]
If $z_1=z_2$, then $\cS(z_1,z_2;\delta;a,b)$ degenerates to the point $z_1$.
Assume $z_1 \ne z_2$.
Then the set $\cS(z_1,z_2;0;1/2,1/2)$ is the segment between $z_1$
and $z_2$. If $a\ne 1/2$, then $\cS(z_1,z_2;0;a,a)$ is the circular arc
between $z_1$ and $z_2$. If $a<1/2$ (resp. $a>1/2$), then one sees the straight
line between $z_1$ and $z_2$ under the angle $2\pi a$ (resp. $2\pi(1-a)$)
and running through the arc from $z_1$ to $z_2$ this straight line is located
at the left-hand side (resp. right-hand side).
The importance of these arcs in the Fredholm theory of singular integral
operators was first observed by Widom \cite{Widom60}, they were exploited
intensively by Gohberg and Krupnik \cite{GK71}, \cite[Chap.~9]{GK92}.

If $z_1\ne z_2$ and $a<b$, then $\cS(z_1,z_2;0;a,b)$ is an ordinary horn
bounded by the circular arcs $\cS(z_1,z_2;0;a,a)$ and $\cS(z_1,z_2;0;b,b)$.
If $\delta\ne 0$, then $\cS(z_1,z_2;\delta;a,a)$ is a logarithmic double spiral.
If $a<b$, then $\cS(z_1,z_2;\delta;a,b)$ is the union of logarithmic double
spirals
\[
\cS(z_1,z_2;\delta;a,b)=\bigcup_{\lambda\in[a,b]}\cS(z_1,z_2;\delta;\lambda,\lambda).
\]
Hence this set is the closed set between two logarithmic double spirals.
It was introduced in \cite{BK95} (see also \cite[Sections~7.3--7.6]{BK97} and
\cite{BKR96,BK01}).
%%%%%%%%%%%%%%%%%%%%%%%%%%%%%%%%%%%%%%%%%%%%%%%%%%%%%%%%%%%%%%%%%%%%%%%%%%%
\subsection{The local spectrum}
Now we are ready to identify the spectrum of $x$ in the algebra
$B=\cL^\pi/\cJ_t$.
%%%%%%%%%%%%%%%%%%%%%%%%%%%%%%%%%%%%%%%%%%%%%%%%%%%%%%%%%%%%%%%%%%%%%%%%%%%
\begin{lemma}\label{le:local-spectrum}
Let $\chi_t\in PC(\Gamma)$ be a continuous function on $\Gamma\setminus\{t\}$
such that
\[
\chi_t(t-0)=0,
\
\chi_t(t+0)=1,
\
\chi_t(\Gamma\setminus\{t\})\cap\cS(0,1;\delta(t);1/p(t)+\mu_t,1/p(t)+\nu_t)=\emptyset,
\]
where $\mu_t$ and $\nu_t$ are defined by \eqref{eq:definition-mu-nu}.
Then the spectrum of the element $x$ given by \eqref{eq:element-x}
in the algebra $B=\cL^\pi/\cJ_t$ coincides with
$\cS(0,1;\delta(t);1/p(t)+\mu_t,1/p(t)+\nu_t)$.
\end{lemma}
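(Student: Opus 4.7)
The plan is to identify $\mathrm{sp}_B(x)$, with $B=\cL^\pi/\cJ_t$, by parametrising spectral points via the canonical functions $g_{t,\gamma}$ of Section~4.2 and then applying the Fredholm criterion of Theorem~\ref{th:Fredholm}. For each $\gamma\in\C$, set
\[
T_\gamma:=g_{t,\gamma}P+Q\in\alg(S,PC,L_N^{p(\cdot)}(\Gamma,w))\subset\cL.
\]
Since $g_{t,\gamma}$ is continuous and nonvanishing on $\Gamma\setminus\{t\}$, the coset $T_\gamma^\pi+\cJ_s$ is invertible in $\cL^\pi/\cJ_s$ for every $s\ne t$ (inverse $g_{t,\gamma}(s)^{-1}P^\pi+Q^\pi$ modulo $\cJ_s$). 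By the Allan--Douglas principle (Theorem~\ref{th:AllanDouglas}) applied to $\cL^\pi$ with central subalgebra $\cZ^\pi$, together with Lemma~\ref{le:OLT-Fredholmness-invertibility}, $T_\gamma$ is Fredholm on $L_N^{p(\cdot)}(\Gamma,w)$ if and only if $T_\gamma^\pi+\cJ_t$ is invertible in $B$.

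I next compute the local symbol using Theorem~\ref{th:2proj}, which applies to $\cA_t$ because the hypothesis $\chi_t(\Gamma\setminus\{t\})\cap\cS=\emptyset$ will be used to show that $0$ and $1$ are not isolated in $\mathrm{sp}_B(x)$. Modulo $\cJ_t$, the coset of $g_{t,\gamma}I$ equals $g_{t,\gamma}(t-0)(e-q)+g_{t,\gamma}(t+0)q$, so
\[
T_\gamma^\pi+\cJ_t=\bigl[g_{t,\gamma}(t-0)(e-q)+g_{t,\gamma}(t+0)q\bigr]p+(e-p).
\]
A direct $2N\times 2N$ matrix computation using the formulas for $\sigma_z$ in Theorem~\ref{th:2proj}(a) yields
\[
\det\sigma_z(T_\gamma^\pi+\cJ_t)=\bigl(g_{t,\gamma}(t-0)(1-z)+g_{t,\gamma}(t+0)z\bigr)^N,
\]
which, in view of $g_{t,\gamma}(t-0)/g_{t,\gamma}(t+0)=e^{2\pi i\gamma}$, vanishes precisely at $z=z(\gamma):=1/(1-e^{-2\pi i\gamma})$. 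By Theorem~\ref{th:2proj}(b), $T_\gamma^\pi+\cJ_t$ is invertible in $B$ iff $z(\gamma)\notin\mathrm{sp}_B(x)$.

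Combining the two equivalences gives $z(\gamma)\in\mathrm{sp}_B(x)$ iff $T_\gamma$ is not Fredholm. Applying Theorem~\ref{th:Fredholm} to $T_\gamma$, using $\arg(g_{t,\gamma}(t-0)/g_{t,\gamma}(t+0))=2\pi\,\mathrm{Re}\,\gamma$ and $\log|g_{t,\gamma}(t-0)/g_{t,\gamma}(t+0)|=-2\pi\,\mathrm{Im}\,\gamma$, and observing that condition \eqref{eq:Fredholm-1} at points $s\ne t$ holds trivially (no jump of $g_{t,\gamma}$ and $\mu_s=\nu_s=0$), one finds that $T_\gamma$ fails to be Fredholm exactly when $\mathrm{Re}\,\gamma+\delta(t)\mathrm{Im}\,\gamma\in[1/p(t)+\mu_t,1/p(t)+\nu_t]+\Z$. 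Inverting $z\mapsto\gamma(z)=\frac{1}{2\pi i}\log\frac{z}{z-1}$ (a bijection modulo $\Z$ from $\C\setminus\{0,1\}$) and computing $\mathrm{Re}\,\gamma(z)+\delta(t)\mathrm{Im}\,\gamma(z)=\frac{1}{2\pi}\bigl(\arg\frac{z}{z-1}-\delta(t)\log|\frac{z}{z-1}|\bigr)$, this translates to membership in $\cS(0,1;\delta(t);1/p(t)+\mu_t,1/p(t)+\nu_t)$; adjoining the endpoints $0,1$ (which lie in $\mathrm{sp}_B(x)$ by closedness and in $\cS$ by definition) finishes the identification. The main technical obstacles are the determinant calculation for $\sigma_z(T_\gamma^\pi+\cJ_t)$ and the verification, from the geometric hypothesis on $\chi_t$, that $0$ and $1$ are non-isolated in $\mathrm{sp}_B(x)$ so that Theorem~\ref{th:2proj} is applicable.
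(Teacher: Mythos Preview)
Your argument is circular. The whole purpose of Lemma~\ref{le:local-spectrum} in the paper is to verify the hypothesis of Theorem~\ref{th:2proj} (that $0$ and $1$ are not isolated in $\mathrm{sp}_B(x)$), so that the two projections theorem can then be applied in the proof of Theorem~\ref{th:main}. You invoke Theorem~\ref{th:2proj}(b) in your step~2 to get the equivalence ``$T_\gamma^\pi+\cJ_t$ invertible in $B$ iff $z(\gamma)\notin\mathrm{sp}_B(x)$'', but that theorem is not available until the non-isolation has been established --- and you list this very verification as an unresolved ``main technical obstacle''. The geometric hypothesis $\chi_t(\Gamma\setminus\{t\})\cap\cS=\emptyset$ does not help here: the coset $q=(\chi_t I)^\pi+\cJ_t$, and hence $x$, is independent of the particular choice of $\chi_t$, so this hypothesis carries no information about $\mathrm{sp}_B(x)$. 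It is used in the paper for a different reason, namely to ensure that the operator $(\chi_t-\lambda)P+Q$ is locally invertible at every $s\ne t$ when $\lambda\in\cS$.

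The paper avoids the circularity by working directly with $(\chi_t-\lambda)P+Q$ rather than with $g_{t,\gamma}P+Q$. Applying Theorem~\ref{th:Fredholm} to $a=\chi_t-\lambda$ identifies the set of $\lambda$ for which this operator is not Fredholm with $\cS(0,1;\delta(t);1/p(t)+\mu_t,1/p(t)+\nu_t)$; the remaining link between this essential spectrum and $\mathrm{sp}_B(x)$ is then obtained by a direct algebraic argument with the two idempotents $p,q$ (a block-triangular decomposition with respect to $p$ and $e-p$, treating the $pBp$- and $(e-p)B(e-p)$-corners separately), not by Theorem~\ref{th:2proj}. That is the content of the reference to \cite[Lemma~9.4]{Karlovich96}. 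Your route via $g_{t,\gamma}$ could in principle be repaired along the same lines, but only after replacing the appeal to Theorem~\ref{th:2proj}(b) by such a direct computation. A minor additional slip: the claim ``$\mu_s=\nu_s=0$ for $s\ne t$'' is false when $s=t_j$ for some $j$ with $t_j\ne t$; the condition \eqref{eq:Fredholm-1} still holds at such $s$ (since $g_{t,\gamma}$ is continuous there and \eqref{eq:KSS-condition} gives $1/p(t_j)+\theta m(w_j)+(1-\theta)M(w_j)\in(0,1)$), but not for the reason you state.
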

%%%%%%%%%%%%%%%%%%%%%%%%%%%%%%%%%%%%%%%%%%%%%%%%%%%%%%%%%%%%%%%%%%%%%%%%%%%
\begin{proof}
From Theorem~\ref{th:Fredholm} we immediately get that the set of all
$\lambda\in\C$ such that the operator $(\chi_t-\lambda)P+Q$
is not Fredholm on $L^{p(\cdot)}(\Gamma,w)$ coincides with
$\cS(0,1;\delta(t);1/p(t)+\mu_t,1/p(t)+\nu_t)$.
After this observation the proof can be developed by a literal repetition
of the proof of \cite[Lemma~9.4]{Karlovich96} with the Boyd indices $\alpha_M$
and $\beta_M$ of an Orlicz space considered there replaced by
the numbers $1/p(t)+\mu_t$ and $1/p(t)+\nu_t$, respectively.
\end{proof}
%%%%%%%%%%%%%%%%%%%%%%%%%%%%%%%%%%%%%%%%%%%%%%%%%%%%%%%%%%%%%%%%%%%%%%%%%%%
\subsection{Construction of the symbol calculus}
Now we are in a position to prove the main result of the paper.
%%%%%%%%%%%%%%%%%%%%%%%%%%%%%%%%%%%%%%%%%%%%%%%%%%%%%%%%%%%%%%%%%%%%%%%%%%%
\begin{theorem}\label{th:main}
Let $N\in\mathbb{N}$, let $\Gamma$ be a Carleson Jordan curve satisfying the
logarithmic whirl condition \eqref{eq:spiralic} at every point $t\in\Gamma$,
let $p:\Gamma\to(1,\infty)$ be a continuous function satisfying
\eqref{eq:Dini-Lipschitz}, and let $w:\Gamma\to[0,\infty]$ be a
radial oscillating weight given by \eqref{eq:weight} with $w_1,\dots,w_n\in\W$
satisfying \eqref{eq:KSS-condition}. Put
\[
\begin{array}{llll}
\mu_{t_k}=m(w_k), & \nu_{t_k}=M(w_k) &\mbox{for}  &k\in\{1,\dots,n\},
\\[2mm]
\mu_t=0, & \nu_t=0 &  \mbox{for}   &t\in\Gamma\setminus\{t_1,\dots,t_n\}
\end{array}
\]
and define the ``spiralic horn bundle" by
\[
\cM:=\bigcup_{t\in\Gamma}\big(\{t\}\times
\cS(0,1;\delta(t);1/p(t)+\mu_t,1/p(t)+\nu_t)\big).
\]
\begin{enumerate}
\item[{\rm(a)}]
For each point $(t,z)\in\cM$, the map
\[
\sigma_{t,z}:\{S\}\cup\{aI\ :\ a\in PC_{N\times N}(\Gamma)\}\to\C^{2N\times 2N}
\]
given by
\[
\sigma_{t,z}(S)
=
\left[\begin{array}{cc}
E & O \\ O & -E
\end{array}\right],
\quad
\sigma_{t,z}(aI)
=
\]
\[
=\left[\begin{array}{cc}
a(t+0)z+a(t-0)(1-z) & (a(t+0)-a(t-0))\sqrt{z(1-z)}
\\[2mm]
(a(t+0)-a(t-0))\sqrt{z(1-z)} & a(t+0)(1-z)+a(t-0)z
\end{array}\right],
\]
where $E$ and $O$ denote the $N \times N$ identity and zero matrices,
respectively, and $\sqrt{z(1-z)}$ denotes any complex number whose square
is $z(1-z)$, extends to a Banach algebra homomorphism
\[
\sigma_{t,z}:\alg(S,PC,L_N^{p(\cdot)}(\Gamma,w))\to\C^{2N\times 2N}
\]
with the property that $\sigma_{t,z}(K)$ is the $2N\times 2N$ zero matrix
whenever $K$ is a compact operator on $L_N^{p(\cdot)}(\Gamma,w)$.

\item[{\rm(b)}]
An operator $A\in\alg(S,PC,L_N^{p(\cdot)}(\Gamma,w))$ is Fredholm
on $L_N^{p(\cdot)}(\Gamma,w)$ if and only if
\[
\det\sigma_{t,z}(A)\ne 0
\quad\mbox{for all}\quad (t,z)\in\cM.
\]

\item[{\rm(c)}]
The quotient algebra $\alg^\pi(S,PC,L_N^{p(\cdot)}(\Gamma,w))$ is inverse closed
in the Calkin algebra $\cB^\pi$, that is, if a coset
$A^\pi\in\alg^\pi(S,PC,L_N^{p(\cdot)}(\Gamma,w))$ is invertible in $\cB^\pi$, then
$(A^\pi)^{-1}\in\alg^\pi(S,PC,L_N^{p(\cdot)}(\Gamma,w))$.
\end{enumerate}
\end{theorem}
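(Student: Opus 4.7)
The plan is to deduce the whole theorem almost mechanically from the Allan--Douglas local principle (Theorem~\ref{th:AllanDouglas}) and the two projections theorem (Theorem~\ref{th:2proj}), since Section~\ref{sect:6} has already carried out all of the algebraic setup: the local quotient $\cL^\pi/\cJ_t$, the idempotents $p, q$ of \eqref{eq:definition-pq}, the copy of $\C^{N\times N}$ in $\cC$ from \eqref{eq:definition-C}, the relations \eqref{eq:projections}, and the identification of $\mathrm{sp}\,x$ in Lemma~\ref{le:local-spectrum}.

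For part (a), I fix $t\in\Gamma$ and apply Theorem~\ref{th:2proj} with $B:=\cL^\pi/\cJ_t$ and $A:=\cA_t$. The hypotheses $p^2=p$, $q^2=q$, $cp=pc$, $cq=qc$ are \eqref{eq:projections}, and by Lemma~\ref{le:local-spectrum} the spectrum of $x$ in $B$ is the spiralic horn $\cS(0,1;\delta(t);1/p(t)+\mu_t,1/p(t)+\nu_t)$; this set is connected and accumulates at both $0$ and $1$, so these points are not isolated and the hypothesis of Theorem~\ref{th:2proj} is met. For each $z$ in the horn, Theorem~\ref{th:2proj}(a) then yields a Banach algebra homomorphism $\sigma_z:\cA_t\to\C^{2N\times 2N}$, and I set
\[
\sigma_{t,z}(A):=\sigma_z\big(A^\pi+\cJ_t\big),\qquad A\in\alg(S,PC,L_N^{p(\cdot)}(\Gamma,w)).
\]
Compact operators automatically map to $0$. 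The formula for $\sigma_{t,z}(S)$ follows from $S=2P-I$, giving $2\sigma_z p-\sigma_z e=\diag\{E,-E\}$. For $aI$ with $a\in PC_{N\times N}(\Gamma)$ the key observation is that, by \eqref{eq:at}, $(aI)^\pi-(a_tI)^\pi\in\cJ_t$, so
\[
\sigma_{t,z}(aI)=a(t-0)\big(\sigma_z e-\sigma_z q\big)+a(t+0)\sigma_z q,
\]
and direct block-multiplication using the prescribed form of $\sigma_z q$ yields the $2\times 2$ block matrix stated in the theorem.

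For part (b), by Lemma~\ref{le:OLT-Fredholmness-invertibility} an operator $A\in\alg(S,PC,L_N^{p(\cdot)}(\Gamma,w))\subset\cL$ is Fredholm iff $A^\pi$ is invertible in $\cL^\pi$. The Allan--Douglas principle applied with the central subalgebra $\cZ^\pi$, whose maximal ideal space is $\Gamma$, reduces this to invertibility of $A^\pi+\cJ_t$ in $\cL^\pi/\cJ_t$ for every $t\in\Gamma$. Theorem~\ref{th:2proj}(b) converts each local condition into $\det\sigma_z(A^\pi+\cJ_t)\ne 0$ for all $z\in\mathrm{sp}\,x$, which by Lemma~\ref{le:local-spectrum} is exactly $\det\sigma_{t,z}(A)\ne 0$ for all $z$ with $(t,z)\in\cM$. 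Assembling these over $t\in\Gamma$ gives the criterion.

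For part (c) I proceed in two stages. First, $\cL^\pi$ is inverse closed in $\cB^\pi$: if $A\in\cL$ and $A^\pi$ is invertible in $\cB^\pi$, then from $\diag\{c,\ldots,c\}A-A\diag\{c,\ldots,c\}\in\cK$ one gets, by multiplying on both sides by a representative of $(A^\pi)^{-1}$, that $(A^\pi)^{-1}$ again commutes with the continuous matrix multipliers modulo~$\cK$, hence lies in $\cL^\pi$. Second, I must show that $\alg^\pi(S,PC,L_N^{p(\cdot)}(\Gamma,w))$ is inverse closed in $\cL^\pi$; applying Allan--Douglas again, this reduces to showing that each $\cA_t$ is inverse closed in $\cL^\pi/\cJ_t$, which by Theorem~\ref{th:2proj}(c) is equivalent to the equality of the spectra of $x$ in $\cA_t$ and in $\cL^\pi/\cJ_t$. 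I expect this to be the main obstacle, since $\mathrm{sp}\,x$ is the spiralic horn $\cS(0,1;\delta(t);1/p(t)+\mu_t,1/p(t)+\nu_t)$ and one must verify that its complement has no bounded component whose inclusion could enlarge the spectrum in $\cA_t$. This is a purely geometric statement about logarithmic double spirals; I would import the analysis used in \cite[Theorem~3.3]{Karlovich05} for the Khvedelidze case, the only change being that the pair $(1/p(t),1/p(t))$ is replaced by $(1/p(t)+\mu_t,1/p(t)+\nu_t)$, which does not affect the connectedness properties of the horn.
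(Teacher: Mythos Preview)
Your proposal is correct and follows essentially the same route as the paper: localization via Allan--Douglas with central subalgebra $\cZ^\pi$, application of the two projections theorem in each local quotient $\cL^\pi/\cJ_t$, and identification of $\mathrm{sp}\,x$ with the spiralic horn via Lemma~\ref{le:local-spectrum}. The only organizational difference is in part~(c): the paper bypasses your Stage~1 by invoking the already-proved part~(b) to pass directly from invertibility of $A^\pi$ in $\cB^\pi$ (i.e., Fredholmness of $A$) to the determinant condition, and then uses the fact that the spiralic horn is compact and does not separate the plane (so the spectra of $x$ in $\cA_t$ and in $\cL^\pi/\cJ_t$ coincide) together with Theorem~\ref{th:2proj}(b),(c) and Allan--Douglas applied to $\alg^\pi$ itself to obtain $(A^\pi)^{-1}\in\alg^\pi$.
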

%%%%%%%%%%%%%%%%%%%%%%%%%%%%%%%%%%%%%%%%%%%%%%%%%%%%%%%%%%%%%%%%%%%%%%%%%%%
\begin{proof}
The idea of the proof is borrowed from \cite[Section~8.5]{BK97}. Here we
follow the proof of \cite[Theorem~5.1]{Karlovich05} and \cite[Theorem~5.4]{Karlovich06}.
Since the latter two sources may not be readily available, we give a
selfcontained proof.

Fix $t\in\Gamma$ and choose a function $\chi_t\in PC(\Gamma)$ as in the
assumptions of Lemma~\ref{le:local-spectrum}. From \eqref{eq:projections}
and Lemma~\ref{le:local-spectrum} we deduce that the algebras $\cL^\pi/\cJ_t$
and $\cA_t=\alg(\cC,p,q)$, where $p,q$, and $\cC$ are given by
\eqref{eq:definition-pq} and \eqref{eq:definition-C}, respectively,
satisfy all the assumptions of Theorem~\ref{th:2proj}.

(a) By Theorem~\ref{th:2proj}(a), for every
$z\in\cS(0,1;\delta(t);1/p(t)+\mu_t,1/p(t)+\nu_t)$, the map
\[
\sigma_{t,z}=\sigma_z\circ\pi_t:\alg(S,PC,L_N^{p(\cdot)}(\Gamma,w))\to\C^{2N\times 2N},
\]
where $\sigma_z$ is given in Theorem~\ref{th:2proj}(a) and $\pi_t$ acts by
the rule $A\mapsto A^\pi+\cJ_t$ for every $A\in\alg(S,PC,L_N^{p(\cdot)}(\Gamma,w))$,
is a well defined Banach algebra homomorphism. It is easy to check that
\[
\sigma_{t,z}(S)=2\sigma_zp-\sigma_ze
=
\left[\begin{array}{cc}
E & O\\
O & -E
\end{array}\right].
\]
If $a\in PC_{N\times N}(\Gamma)$, then from \eqref{eq:at} and
$(aI-a_tI)^\pi\in\cJ_t$ it follows that
\[
\begin{split}
\sigma_{t,z}(aI)&=\sigma_{t,z}(a_tI)=
\sigma_z(a(t-0))\sigma_z(e-q)+\sigma_z(a(t+0))\sigma_zq
\\
&=
\left[\begin{array}{cc}
a(t+0)z+a(t-0)(1-z) & (a(t+0)-a(t-0))\sqrt{z(1-z)}
\\[2mm]
(a(t+0)-a(t-0))\sqrt{z(1-z)} & a(t+0)(1-z)+a(t-0)z
\end{array}\right].
\end{split}
\]
From Lemma~\ref{le:OLT-embedding} it follows that $\pi_t(K)$ is correctly defined
for every $K\in\cK$ and $\pi_t(K)=\cJ_t$. Hence
\[
\sigma_{t,z}(K)=\sigma_z(0)=\left[\begin{array}{cc}
O & O\\O & O
\end{array}\right].
\]
Part (a) is proved.

\medskip
(b) From Lemma~\ref{le:OLT-Fredholmness-invertibility} it follows that the
Fredholmness of an operator $A$ from $\alg(S,PC,L_N^{p(\cdot)}(\Gamma,w))$
is equivalent to the invertibility of $A^\pi\in\cL^\pi$. By
Theorem~\ref{th:AllanDouglas}, the latter is equivalent to the invertibility
of $\pi_t(A)=A^\pi+\cJ_t$ in $\cL^\pi/\cJ_t$ for every $t\in\Gamma$. By
Theorem~\ref{th:2proj}(b), this is equivalent to
%%%
\begin{equation}\label{eq:main-1}
\det\sigma_{t,z}(A)=\det\sigma_z\pi_t(A)\ne 0
\quad\mbox{for all}\quad
(t,z)\in\cM.
\end{equation}
%%%
Part (b) is proved.

\medskip
(c) Since the compact simply connected set
$\cS(0,1;\delta(t);1/p(t)+\mu_t,1/p(t)+\nu_t)$
does not separate the complex plane, it follows that the spectra of the
element $x$ given by \eqref{eq:element-x} in the algebras $\cL^\pi/\cJ_t$
and $\cA_t$ coincide. Suppose $A$ belongs to $\alg(S,PC,L_N^{p(\cdot)}(\Gamma,w))$.
If $A^\pi$ is invertible in $\cB^\pi$, then
\eqref{eq:main-1} is fulfilled. Consequently, by Theorem~\ref{th:2proj}(b), (c),
$\pi_t(A)=A^\pi+\cJ_t$ is invertible in $\cA_t$ for every $t\in\Gamma$.
Applying the Allan-Douglas local principle (Theorem~\ref{th:AllanDouglas})
to the algebra $\alg^\pi(S,PC,L_N^{p(\cdot)}(\Gamma,w))$, its central subalgebra
$\cZ^\pi$ and the ideals $\cJ_t$, we obtain that $A^\pi$ is invertible
in $\alg^\pi(S,PC,L_N^{p(\cdot)}(\Gamma,w))$. Thus,
$\alg^\pi(S,PC,L_N^{p(\cdot)}(\Gamma,w))$ is inverse closed in $\cB^\pi$.
\end{proof}
%%%%%%%%%%%%%%%%%%%%%%%%%%%%%%%%%%%%%%%%%%%%%%%%%%%%%%%%%%%%%%%%%%%%%%%%%%%
For constant $p$, Khvedelidze weights, and piecewise Lyapunov curves,
Theorem~\ref{th:main} was obtained in \cite{GK71} by a different
method. For constant $p$, arbitrary Muckenhoupt weights,
and arbitrary Carleson curves, an analogue of this result is contained in
\cite[Section~8.5]{BK97}. In the case of variable exponent $p:\Gamma\to(1,\infty)$
and Khvedelidze weights, an earlier version of this theorem for Lyapunov curves
or Radon curves without cusps is in \cite[Theorem~5.4]{Karlovich06} and
for Carleson curves satisfying \eqref{eq:spiralic} is in
\cite[Theorem~5.1]{Karlovich05}.
%%%%%%%%%%%%%%%%%%%%%%%%%%%%%%%%%%%%%%%%%%%%%%%%%%%%%%%%%%%%%%%%%%%%%%%%%%%
\subsection{Final remarks}
%%%%%%%%%%%%%%%%%%%%%%%%%%%%%%%%%%%%%%%%%%%%%%%%%%%%%%%%%%%%%%%%%%%%%%%%%%%
\begin{remark}[On index formulas]
We do not consider formulas for the index of an operator in the algebra
$\alg(S,PC,L_N^{p(\cdot)}(\Gamma,w))$
%because of lack of the space. Note that
since the approach to the study of Banach algebras of SIOs
based on the Allan-Douglas local principle and the two projections
theorem does not allow us to get formulas for the index of an arbitrary
operator in the Banach algebra of SIOs with piecewise continuous coefficients.
These formulas can be obtained similarly to the classical situation
considered by Gohberg and Krupnik \cite{GK71} (see also \cite[Chap.~10]{BK97}).
For reflexive Orlicz spaces over Carleson curves with logarithmic whirl points
this was done by the author \cite{K98-index}. In the case of variable
Lebesgue spaces with radial oscillating weights weights over Carleson curves with
logarithmic whirl points, the index formulas are almost the same as in \cite{K98-index}.
It is only necessary to replace the Boyd indices $\alpha_M$ and $\beta_M$ of an
Orlicz space $L^M$ by the numbers $1/p(t_k)+m(w_k)$ and $1/p(t_k)+M(w_k)$
(at the nodes $t_k$ of the weight), respectively,  or both Boyd indices
by $1/p(t)$ (at all other points $t$) in corresponding index formulas.
\end{remark}
%%%%%%%%%%%%%%%%%%%%%%%%%%%%%%%%%%%%%%%%%%%%%%%%%%%%%%%%%%%%%%%%%%%%%%%%%%%
\begin{remark}[On semi-Fredholm operators]
Since one has Theorem~\ref{th:criterion-closedness} at hands, by using ideas
of Spitkovsky \cite{Spitkovsky92}, one can prove that if $a,b\in PC_{N\times N}(\Gamma)$,
then $aP+bQ$ is semi-Fredholm if and only if it is Fredholm. With the help of
the linear dilation procedure (see e.g. \cite{GK71}) this fact can be extended to
the operators of the form $\sum_i\prod_j (a_{ij}P+b_{ij}Q)$ with
$a_{ij},b_{ij}\in PC_{N\times N}(\Gamma)$. Since the property
of an operator to be Fredholm or semi-Fredholm is stable under small perturbations,
we finally arrive at the following result. Its proof, given
for Khvedelidze weights in \cite{Karlovich07}, works also for weights
considered in this paper.
\end{remark}
%%%%%%%%%%%%%%%%%%%%%%%%%%%%%%%%%%%%%%%%%%%%%%%%%%%%%%%%%%%%%%%%%%%%%%%%%%%%%
\begin{theorem}
Suppose that all the hypotheses of Theorem~\ref{th:main} are fulfilled.
If an operator in the
algebra $\alg(S,PC;L_N^{p(\cdot)}(\Gamma,w))$ is semi-Fredholm, then
it is Fredholm.
\end{theorem}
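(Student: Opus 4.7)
The plan follows the three-step strategy sketched in the remark preceding the statement, which in turn mirrors the treatment of Khvedelidze weights in \cite{Karlovich07}: first handle the model operators $aP+bQ$, then bootstrap to polynomial combinations via a linear dilation, and finally use density together with stability of semi-Fredholmness under small perturbations.

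\textbf{Step 1 (the model case).} I would first show that, for $a,b\in PC_{N\times N}(\Gamma)$, semi-Fredholmness of $aP+bQ$ on $L_N^{p(\cdot)}(\Gamma,w)$ implies Fredholmness. Semi-Fredholmness gives in particular that the image is closed, and a standard argument adapting the necessity direction of the Fredholm theory (Theorem~\ref{th:necessity}) forces $a(t\pm 0)$ and $b(t\pm 0)$ to be invertible matrices at every $t\in\Gamma$; multiplication by an invertible factor then reduces the problem to $aP+Q$ with $a\in PC_{N\times N}(\Gamma)$ having only finitely many jumps. At this point the decisive tool is Theorem~\ref{th:criterion-closedness}, applied in a block/matrix version: closedness of the image of $aP+Q$ is equivalent to the symbol condition \eqref{eq:Fredholm-1}, and Theorem~\ref{th:Fredholm} identifies \eqref{eq:Fredholm-1} as the full Fredholm criterion. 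This is essentially Spitkovsky's argument \cite{Spitkovsky92} transplanted to the variable-exponent, radial-oscillating-weight setting.

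\textbf{Step 2 (dilation).} A classical linear dilation of Gohberg--Krupnik \cite{GK71} realizes any operator of the form $A=\sum_i\prod_j(a_{ij}P+b_{ij}Q)$ with $a_{ij},b_{ij}\in PC_{N\times N}(\Gamma)$ as a corner block of an operator of the simple form $\widetilde aP+\widetilde bQ$ acting on $L_{MN}^{p(\cdot)}(\Gamma,w)$ for some $M\in\mathbb{N}$ with $\widetilde a,\widetilde b\in PC_{MN\times MN}(\Gamma)$. Since the dilation preserves both the semi-Fredholm and the Fredholm property, Step~1 yields the desired implication for such $A$.

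\textbf{Step 3 (density and stability).} The operators of the form treated in Step~2 are dense in $\alg(S,PC,L_N^{p(\cdot)}(\Gamma,w))$. Let $A$ be any semi-Fredholm element of this algebra; without loss of generality $n(A;L_N^{p(\cdot)}(\Gamma,w))<\infty$ (otherwise pass to the adjoint, using reflexivity). Approximate $A$ in operator norm by $A_n$ of the form in Step~2. Since the set of semi-Fredholm operators is open and the Fredholm index is locally constant on it, for $n$ large $A_n$ is semi-Fredholm with $\operatorname{ind}(A_n)=\operatorname{ind}(A)$. By Step~2, each such $A_n$ is actually Fredholm, so $\operatorname{ind}(A_n)$ is a finite integer; therefore $\operatorname{ind}(A)=n(A)-d(A)$ is finite, forcing $d(A)<\infty$ and hence $A$ is Fredholm. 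The main obstacle is concentrated in Step~1: once Theorem~\ref{th:criterion-closedness} supplies the bridge from ``image closed'' directly to the full symbol condition \eqref{eq:Fredholm-1}, Steps~2 and~3 proceed by standard abstract machinery that is insensitive to the particular form of the weight.
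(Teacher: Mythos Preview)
Your proposal is correct and follows essentially the same three-step strategy that the paper sketches in the remark preceding the theorem (the paper gives no detailed proof, pointing instead to \cite{Karlovich07} and noting that the argument there carries over verbatim once Theorem~\ref{th:criterion-closedness} is available). The one place where your write-up is slightly loose is the reduction in Step~1 to coefficients with finitely many jumps: this is not automatic for arbitrary $a\in PC_{N\times N}(\Gamma)$, and Theorem~\ref{th:criterion-closedness} is stated only under that hypothesis, so in practice the finite-jump case is handled first and the general $PC$ case is reached by approximation (which you already have in Step~3); this is exactly how \cite{Karlovich07} proceeds, so the outline is sound.
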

%%%%%%%%%%%%%%%%%%%%%%%%%%%%%%%%%%%%%%%%%%%%%%%%%%%%%%%%%%%%%%%%%%%%%%%%%%%%%
\begin{remark}[On coefficients beyond $PC(\Gamma)$]
The approach of this paper is based on the Wiener-Hopf factorization, local
principles, and the two-projection theorem. Note that there is also another
approach to study SIOs on standard Lebesgue spaces $L^p(\Gamma,w)$ with slowly
oscillating (i.e. not arbitrary!) Muckenhoupt weights over slowly oscillating (i.e. not all!)
Carleson curves. It is based based on the technique of pseudodifferential
operators and limit operators developed by Rabinovich and his coauthors
(see e.g. \cite{Rabinovich96} and \cite{BKR96,BKR00}). This approach allows one
to study SIOs not only with piecewise continuous coefficients, but also with
slowly oscillating coefficients. Notice that a satisfactory theory of
pseudodifferential operators in the setting of variable Lebesgue spaces
did not exist a couple of years ago. Very recently Rabinovich and S. Samko \cite{RS08}
have started to develop such a theory, however they do not consider in that paper
SIOs on curves.
\end{remark}
%%%%%%%%%%%%%%%%%%%%%%%%%%%%%%%%%%%%%%%%%%%%%%%%%%%%%%%%%%%%%%%%%%%%%%%%%%%%%
\begin{remark}[On more general variable exponents]
It is well known that the boundedness of $S$ is guaranteed as soon as a
suitable maximal function is bounded on $L^{p(\cdot)}(\Gamma,w)$.
Lerner \cite{Lerner05}, among other things, observed that if
$p(x)=\alpha+\sin(\log\log(1/|x|)\chi_E(x))$, where $\alpha>2$ is some
constant and $\chi_E$ is the characteristic function of the ball
$E:=\{x\in\R^n:|x|\le 1/e\}$, then the Hardy-Littlewood maximal function
is bounded on $L^{p(\cdot)}(\R^n)$. Clearly, the exponent $p$ in this
example is discontinuous at the origin, so it does not satisfy (an $\R^n$
analog of) the condition \eqref{eq:Dini-Lipschitz}. This exponent belongs
to the class of pointwise multipliers for $BMO$ (the space of functions
of bounded mean oscillation). Kapanadze and Kopaliani \cite{KK08} proved
that if $p$ belongs to $VMO^{1/|\log|}$, a weighted space of functions
with vanishing mean oscillation, then the Hardy-Littlewood maximal function
is bounded on variable Lebesgue space $L^{p(\cdot)}(\Omega)$ over a bounded
domain $\Omega$. Thus, I believe that necessary and sufficient
conditions for the boundedness of the Cauchy singular integral operator
(and other singular integrals and maximal functions) on weighted variable
Lebesgue spaces $L^{p(\cdot)}(\Gamma,w)$ should be formulated in terms of
integral means of the exponent $p$ (i.e., in $BMO$ terms), but not in
pointwise terms like \eqref{eq:Dini-Lipschitz}. It is natural that the number
$1/p(t)$ in \eqref{eq:Fredholm-1} under eventual more general boundedness
hypotheses for $S$ should be replaced by a pair of indices of a suitable
submultiplicative function (see e.g. \cite[Section~4.4]{Karlovich03}).
I believe that, in general, these indices may be different and the following is true.
\end{remark}
%%%%%%%%%%%%%%%%%%%%%%%%%%%%%%%%%%%%%%%%%%%%%%%%%%%%%%%%%%%%%%%%%%%%%%%%%%%%%
\begin{conjecture}\label{conjecture}
Suppose $\mathbb{T}$ is the unit circle and $a:\mathbb{T}\to\C$
be a continuous on $\mathbb{T}\setminus\{1\}$ function such that
$a(1-0)=0$ and $a(1+0)=1$. There is a measurable function $p:\Gamma\to(1,\infty)$
such that the operator $S$ is bounded on $L^{p(\cdot)}(\mathbb{T})$ and the essential
spectrum
\[
\sigma_{{\rm ess}}(aP+Q):=
\{\lambda\in\C: (aP+Q)-\lambda I \mbox{ is not Fredholm on } L^{p(\cdot)}(\mathbb{T})\}
\]
of the operator $aP+Q$ is massive, that is, it has a nonzero plane measure.
\end{conjecture}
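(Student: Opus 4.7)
The plan is to construct an explicit example on $\mathbb{T}$ in which the oscillation of the exponent at $t=1$ produces two genuinely distinct effective indices, so that the local horn at $t=1$ opens up to positive plane area while the rest of $\sigma_{\rm ess}(aP+Q)$ remains $1$-dimensional.

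First I would take an exponent of the type mentioned in the paper's last remark, for instance
\[
p(e^{i\theta}):=\alpha+\sin\bigl(\log\log(1/|\theta|)\bigr)\chi_E(\theta),
\]
with $\theta\in(-\pi,\pi]$, $E:=\{\theta:|\theta|\le 1/e\}$ and $\alpha>2$. Such $p$ belongs to the class of pointwise multipliers for $BMO$, and adapting Lerner's argument (or reducing to $\R$ by a conformal localization) one should deduce the boundedness of $S$ on $L^{p(\cdot)}(\mathbb{T})$; the weight here is $w\equiv 1$.

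Next, in the spirit of Subsection~\ref{sect:MO} and \cite[Section~4.4]{Karlovich03}, I would associate with the pair $(p,1)$ at the point $t=1$ a submultiplicative function $\Psi$, e.g.\
\[
\Psi(x):=\limsup_{R\to 0}\frac{\|\chi_{\Gamma(1,xR)}\|_{p(\cdot)}\,\|\chi_{\Gamma(1,R)}\|_{q(\cdot)}}{\|\chi_{\Gamma(1,R)}\|_{p(\cdot)}\,\|\chi_{\Gamma(1,xR)}\|_{q(\cdot)}},
\]
whose lower and upper indices $\alpha(\Psi)\le\beta(\Psi)$ are the candidates to replace the single number $1/p(t)$ in \eqref{eq:Fredholm-1}. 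The iterated oscillation $\sin(\log\log(1/|\theta|))$ is precisely designed so that the modular $\|\chi_{\Gamma(1,\cdot)}\|_{p(\cdot)}$ does not behave like a single power law near $0$ but swings between two distinct exponents; a direct estimate should then yield $\alpha(\Psi)<\beta(\Psi)$ strictly.

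Assuming an extension of the Fredholm criterion in which the pointwise value $1/p(t)$ is replaced by the pair $(\alpha(\Psi_t),\beta(\Psi_t))$, the local spectrum of $aP+Q$ at $t=1$ becomes the horn $\cS(0,1;0;\alpha(\Psi),\beta(\Psi))$ (recall $\delta(t)\equiv 0$ on $\mathbb{T}$), while the local spectrum at each $t\ne 1$ reduces to the single point $\{a(t)\}$ because $a$ is continuous there. A horn bounded by two distinct circular arcs joining $0$ and $1$ has positive two-dimensional Lebesgue measure, and adding the rectifiable curve $\{a(t):t\ne 1\}$ does not change this. Hence $\sigma_{\rm ess}(aP+Q)$ is massive.

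The main obstacle is the third step: pushing the necessity portion of Theorem~\ref{th:Nakano-necessity}, and hence of Theorem~\ref{th:Fredholm}, through without the Dini--Lipschitz condition \eqref{eq:Dini-Lipschitz}. All the structural machinery of Sections~\ref{sect:3}--\ref{sect:5} (Simonenko's local principle, Wiener--Hopf factorization, Allan--Douglas, two projections) applies to \emph{any} weight for which $S$ is bounded; what genuinely uses \eqref{eq:Dini-Lipschitz} is the identification of the local spectrum of the ``generator'' $x$ in \eqref{eq:element-x} via the indicator functions $\alpha_t,\beta_t$. Replacing this identification by one in terms of $\alpha(\Psi_t)$ and $\beta(\Psi_t)$, and verifying that the submultiplicative structure survives, is the technical heart of the conjecture and the place where substantial new analysis, well beyond the scope of this paper, would be required.
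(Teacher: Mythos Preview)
This statement is a \emph{conjecture}; the paper does not prove it, and your proposal is explicitly not a proof either, so the relevant question is whether your outline matches the paper's own reduction of the problem.

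Your sketch and the paper's post-conjecture discussion agree on the broad strategy---build an oscillating exponent near $t=1$ so that the local ``index pair'' separates and the horn at $1$ has positive area---but they diverge sharply on where the difficulty lies. You locate the main obstacle in extending the \emph{necessity} portion of the Fredholm criterion beyond the Dini--Lipschitz condition \eqref{eq:Dini-Lipschitz}. The paper, however, points out that this step is \emph{already available}: by \cite[Theorem~7.3]{Karlovich03}, for \emph{any} measurable exponent $p$ with $1<\operatorname*{ess\,inf}p\le\operatorname*{ess\,sup}p<\infty$ and $S$ bounded on $L^{p(\cdot)}(\mathbb{T})$ one has the inclusion
\[
a(\mathbb{T}\setminus\{1\})\cup\cS(0,1;0;\alpha,\beta)\subset\sigma_{\mathrm{ess}}(aP+Q),
\]
where $\alpha,\beta$ are the indices of the submultiplicative function
\[
Q(x):=\limsup_{R\to 0}\frac{\|\chi_{\Delta(xR)}\|_{p(\cdot)}\|\chi_{\Delta(R)}\|_{q(\cdot)}}{|\Delta(R)|},
\qquad \Delta(R):=\Gamma(1,R)\setminus\Gamma(1,R/2).
\]
No extension of Theorem~\ref{th:Nakano-necessity} is needed; the horn inclusion is unconditional. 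Consequently the paper reduces the conjecture to a single concrete analytic task: exhibit an exponent $p$ for which $S$ is bounded on $L^{p(\cdot)}(\mathbb{T})$ \emph{and} $0<\alpha<\beta<1$ for this particular $Q$.

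Two further discrepancies are worth noting. First, your candidate index function $\Psi$ is not the one the paper uses: the paper's $Q$ is built from norms over the \emph{annuli} $\Delta(R)$ and divided by $|\Delta(R)|$, not from a ratio of norms over full portions $\Gamma(1,R)$. Since the existing necessity result is formulated in terms of $Q$, separating the indices of your $\Psi$ would not, by itself, confirm the conjecture. Second, the step you pass over lightly---``a direct estimate should then yield $\alpha(\Psi)<\beta(\Psi)$ strictly''---is, in the paper's formulation, precisely the open problem: one must show that for an exponent such as Lerner's both the boundedness of $S$ on $L^{p(\cdot)}(\mathbb{T})$ and the strict inequality $\alpha<\beta$ for $Q$ hold simultaneously. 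That computation, not the Fredholm machinery, is where the genuine work remains.
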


If $p:\mathbb{T}\to(1,\infty)$ is a measurable function such that
%%%
\begin{equation}\label{eq:bounded-exponent}
1<\operatornamewithlimits{ess\,inf}_{\tau\in\mathbb{T}}p(\tau),
\quad
\operatornamewithlimits{ess\,sup}_{\tau\in\mathbb{T}}p(\tau)<\infty
\end{equation}
%%%
and $S$ is bounded on $L^{p(\cdot)}(\mathbb{T})$, then from
\cite[Theorem~7.3]{Karlovich03} it follows that
\[
a(\mathbb{T}\setminus\{1\}) \cup\cS(0,1;0;\alpha,\beta)
\subset
\sigma_{{\rm ess}}(aP+Q)
\]
where $\alpha$ and $\beta$ are the lower and upper indices of the
following submultiplicative function
\[
Q(x):=\limsup_{R\to 0}
\frac{\|\chi_{\Delta(xR)}\|_{p(\cdot)}\|\chi_{\Delta(R)}\|_{q(\cdot)}}{|\Delta(R)|},
\quad
x\in(0,\infty),
\]
where $\Delta(R):=\Gamma(1,R)\setminus\Gamma(1,R/2)$.
Notice that if $p$ satisfies \eqref{eq:Dini-Lipschitz}, then $\alpha=\beta=1/p(1)$
by \cite[Lemma~5.8]{Karlovich03}.
If $0<\alpha<\beta<1$, then
the horn $\cS(0,1;0;\alpha,\beta)$ has a nonzero plane measure. Thus, to
confirm Conjecture~\ref{conjecture}, it is sufficient to construct an
exponent $p:\mathbb{T}\to(1,\infty)$ such that \eqref{eq:bounded-exponent}
holds, $S$ is bounded on $L^{p(\cdot)}(\mathbb{T})$, and $0<\alpha<\beta<1$.
%%%%%%%%%%%%%%%%%%%%%%%%%%%%%%%%%%%%%%%%%%%%%%%%%%%%%%%%%%%%%%%%%%%%%%%%%%%%%
\begin{remark}[On arbitrary Carleson curves]%, added in January of 2009]
Very recently the author \cite{Karlovich08} has obtained sufficient conditions
for the boundedness of the Cauchy singular integral operator $S$ on variable
Lebesgue spaces $L^{p(\cdot)}(\Gamma,w)$ with special weights $w(\tau)=|(\tau-t)^\gamma|$,
where $\gamma$ is a complex number, over arbitrary Carleson Jordan curves.
This allows him to obtain analogues of Theorems~\ref{th:Fredholm} and
\ref{th:main} for the case of nonweighted variable Lebesgue spaces
$L^{p(\cdot)}(\Gamma)$ over arbitrary Carleson curves (that is, without
condition \eqref{eq:spiralic}). As in the present paper, local spectra
of singular integral operators can be massive, but the reason for that is
different. In the present paper this effect is due to oscillation of weights,
but in the forthcoming paper \cite{Karlovich08} this effect is due to oscillation
of arbitrary Carleson curves.
\end{remark}
%%%%%%%%%%%%%%%%%%%%%%%%%%%%%%%%%%%%%%%%%%%%%%%%%%%%%%%%%%%%%%%%%%%%%%%%%%%%%

\end{document}